\definecolor{ForestGreen}{rgb}{0.1,0.6,0.05}
\definecolor{EgyptBlue}{rgb}{0.063,0.1,0.6}
\newtheorem{theorem}{Theorem}
\newtheorem{proposition}[theorem]{Proposition}
\newtheorem{lemma}[theorem]{Lemma}
\newtheorem{corollary}[theorem]{Corollary}
\theoremstyle{definition}
\newtheorem{definition}[theorem]{Definition}
\newtheorem{remark}[theorem]{Remark}
\let\OLDthebibliography\thebibliography
\renewcommand\thebibliography[1]{
	\OLDthebibliography{#1}
	\setlength{\parskip}{1pt}
	\setlength{\itemsep}{1pt plus 0.3ex}
}
\numberwithin{equation}{section}
\numberwithin{theorem}{section}
\numberwithin{equation}{section}
\numberwithin{theorem}{section}
\newcommand{\B}{{B{(x_0)}}}
\def\namedlabel#1#2{\begingroup
    #2%
    \def\@currentlabel{#2}%
    \phantomsection\label{#1}\endgroup
}
\title[Large solutions of quasilinear elliptic equations]{Large solutions of degenerate and/or singular quasilinear elliptic equations in a ball}
\author[R.~N.~Dhara]{Raj Narayan Dhara}
\address{
Department of Mathematical Analysis and
Applications of Mathematics\\
Faculty of Science, Palacký University\\
17.listopadu 12,
771 46 Olomouc, Czech Republic\\
	\newline\indent
Department of Mathematics and Statistics, Faculty of Science
	\newline\indent
	Masaryk University, Kotl\'a\v{r}sk\'a 2, 611 37 Brno, Czech Republic
}
\email{dhara@math.muni.cz, rajnarayan.dhara@upol.cz}
\date{}
\subjclass[2010]{35J70, 35J92, 35J25, 35B44, 35B51, 35C99}
\keywords{large solutions, boundary blow-up behaviors, degenerate PDE, weighted p-Laplacian}
\thanks{
	RND acknowledges the support of the Czech Science Foundation, project GJ19-14413Y and the support of faculty project IGA\_{}PrF\_{}2022\_{}008.
}
\begin{document}
\begin{abstract} 
	We consider local weak large solutions with its blow-up rate near the boundary to certain class of degenerate and/or singular quasilinear elliptic equation\\ 
${\rm div}(d^{\alpha}(x,\partial{}B)\Phi_p(\nabla u)) = b(x)f(u)$	
 in a ball B, where $f$ is normalized regularly varying at infinity with index $\sigma+1>p-1,\ p>1$. In particular, how the asymptotic behavior of the solution changes over the varying index and degeneracy and/ or singularity present in the equation. We also include the second order blow-up rate for the corresponding semilinear problem.
\end{abstract} 
\maketitle 

\section{Introduction and main result}\label{sec:intro}
Consider the following quasilinear equation
\begin{align}
\Delta_{d^{\alpha},p} u:={\rm div}(d^{\alpha}(x,\partial\Omega)\Phi_p(\nabla u)) = b(x)f(u)\ \text{in}\ \B,\label{eq:1}\\
u(x)\rightarrow\infty\ \ \text{as}\ d(x)\to 0,\label{eq:2}
\end{align}
where $\B:=B_{1}(x_0)$, where $B_{r}(x_0)\subset\mathbb{R}^{N},\, N>1$ is a ball centered at $x_0$ with radius $r>0$,  $d(x,\partial \B):={\rm dist}(x,\partial\B)$ is the distance from $x$ to $\partial\B$ the boundary of $\B$, $-1<\alpha<p-1, p>1$, $\Phi_p(\cdot)=|\cdot|^{p-2}(\cdot)$, $0< b\in C(\B)$ and $f\ge 0$ is a continuously differentialble nondecreasing function. Eventually, one can choose any bounded ball of finite radius but for simplicity we investigate the problem in a unit ball $\B$.  The boundary behavior~\eqref{eq:2} is being understood in the pointwise limit sense, i.e., for any real number $M$, there exists a $\delta>0$ such that $u(x)>M$ whenever $d(x)<\delta$. 
 A positive solution of~\eqref{eq:1} satisfying~\eqref{eq:2} is called a {\em large solution}. The most interesting part for having such large solutions for nonlinear partial differential equations (PDE) satisfying 
comparison principle in some sense is that it provide an upper bound for any
solution of the PDE on a given bounded domain irrespective of boundary conditions.


 We are interested to study the existence and uniqueness of solutions of~\eqref{eq:1}-\eqref{eq:2} and its blow-up rate with general nonlinearity imposed by regularly varying function (see e.g.~\cite{bingham87,sene}), and their assymptotic behavior near the boundary. The various aspects of considered problem may arise in many branches of mathematics and applied mathematics. Therefore, the existence, uniqueness of large solutions and its boundary behavior for the nondegenerate ($\alpha=0,\ p=2,\ b\equiv 1$) case has been discussed and extended by many authors as well. For instance, the problem
 \begin{align}\label{eq:1:clscl}
 \begin{cases} 
 \Delta u=f(u),\quad \Omega,\\
 u\to\infty,\quad x\to\partial\Omega,
 \end{cases}
 \end{align}
where $\Omega$ is a bounded domain and many generalizations of its type have been extensively studied. For instance, Bieberbach (\cite{Bieberbach1916}, 1916) firstly showed that~\eqref{eq:1:clscl}, coming from differential geometry application admits a unique solution when $\Omega$ is a smooth planar domain and $f(u)=e^{u}$. Later on, Rademacher extended Bieberbach's results to three dimensional domain of consideration as an application to mathematical physics.

 Finally, Keller~\cite{Keller57} and Osserman~\cite{osserman57} first came up independently with the necessary and sufficient condition, popularly known as Keller-Osserman condition, or superlinear growth of the nonlinearity $f(u)$ as in~\eqref{assum:f1} for the existence of the large solutions to~\eqref{eq:1:clscl} in a smooth bounded domain $\Omega\subset\mathbb{R}^{N}$. The Keller-Osserman condition essentially implies {\em a priori} estimate in terms of radial solution. Furthermore, as a consequence, it has been shown that it is equivalent to the fact that the nonexistence of entire solution of $\Delta u=f(u)\ \text{in}\ \mathbb{R}^{N}$. It is worth to mention that the problem of existence and uniqueness of type~\eqref{eq:1:clscl} for $f(u)=u^{(N+2)/(N-2)},\ N>2$ was considered by Loewner and Nirenberg~\cite[1974]{nirenberg74}. They were looking for a geometric question when a Riemannian metric will be a complete metric and invariant under M\"obius transformations. In a trend of Bieberbach's type problem, various aspects of it has been treated in a series of papers by several reknown mathematicians. For the sake of our considered problem we discuss some of the preliminary results dealing with $p$-Laplacian. Diaz and Letelier~\cite{diaz93}, first studied the problem~\eqref{eq:1}-\eqref{eq:2}, for $\alpha=0,\ b\equiv\lambda\in\mathbb{R},\ f(u)=u^{\rho},\rho>p-1$ and later on by Matero~\cite{matero96}. Mohammed~\cite{mohammed04} consider the problem~\eqref{eq:1}-\eqref{eq:2} with more general assumption when $b$ is allowed to be unbounded near $\partial\Omega$.

 To get the blow-up rate of~\eqref{eq:1}-\eqref{eq:2}, we follow the approach of Karamata's regularly varying function firstly introduced, by C\^{\i}rstea and R\v{a}dulescu in a series of papers~\cite{radu2002,radu2006,radu2007}. 
We say a measurable function $f:(0,+\infty)\to (0,+\infty)$, is called {\em regularly varying at infinity} with index $\rho\in\mathbb{R}$, written as $f\in RV_{\rho}$, if for $\xi>0$
\begin{align*}
\lim_{u \to\infty} f(\xi u)/f(u) = \xi^{\rho} .
\end{align*} 
We refer to Section~\ref{sec:reg:func} for other information on regularly varying function. The various results using the regularly varying function theory applied to obtain the exact boundary behavior can be seen in~\cite{mohm2007,zhang2017}.
  Furthermore, the exact second order blow-up rate can be seen in~\cite{radu2003}.

In this article we propose a substantially adapted technical analysis of the fact that deos not require different analysis around the {\em degenerate (and/ or singular)} and  {\em regular} points. In contrast to the standard theory, we will establish the existence of a unique large solution for the degenerate and/or singular  $p$-Laplacian elliptic equations. We also provide the exact first and second order blow-up rates near the boundary.

It is noted that the boundary behavior~\eqref{eq:2} has not been imposed a priori, rather we show that under the adequate condition on growth of the nonlinearity $f$ in~\eqref{eq:1}, namely, Keller-Osserman condition gives a unique singular nature near the boundary $\partial\B$ by a uniform rate of  explosion depending on $\alpha, b$ and $f$. Since explosive solutions are not defined on the boundary, we must adapt an adequate version of comparison principle $\bar{u}\le \overline{u}$ in $\B$ in a weak sense for any two local solutions $\bar{u},\, \overline{u}$ to~\eqref{eq:1}, see Theorem~\ref{thm:comp:1}.

Furthermore, we have the following assumptions for $f$ and the coefficient $b$ are as follows.
\begin{description}
\item[\namedlabel{itm:f1}{(F)}] Assume that
$f$ satisfies
\begin{align}
C^{1}([0,\infty))\ni f (>0)\ \text{is nondecreasing on}\ (0,\infty),\ f(0)=0;\label{assum:f1}\tag{F1}\\
 \int_{1}^{\infty}F^{-1/p}(t)\, dt<\infty,\ \text{where}\ F(t):=\int_{0}^{t}f(s)\,ds;\label{assum:f2}\tag{F2}
\end{align}
\item[\namedlabel{itm:b1}{(B)}] $b\in C(\B)$, is positive in $\B$;
\item[\namedlabel{itm:b2}{($\tilde{B}$)}] $b$ satisfies the condition that if there exists $x_0\in\B$ such that $b(x_0)=0$, then there exists an annulus $B_{R}(x_0)\setminus \overline{B_{r}(x_0)}=:A_{x_0}(r,R)\Subset\B,\ 0<r<R<1$ with the same center $x_0$ such that $b(x)>0$ for all $x\in\partial B_{x_0}(R)$;
\item[\namedlabel{assum:B2}{(B$^\prime$)}] Boundary assymptotic of $b$ is given as for positive constants $b_i, (i=1,2)$ such that
\begin{align*}
b_1:=\liminf_{d(x)\to 0}\frac{b(x)}{d^{\alpha-\frac{\alpha p}{2}}(x)k^{p}(d(x))}\le \limsup_{d(x)\to 0}\frac{b(x)}{d^{\alpha-\frac{\alpha p}{2}}(x)k^{p}(d(x))}=:b_2 ;
\end{align*}
In particular, when $b_1=b_2=c$, for some constant $c>0$, we can assume
\begin{align}\label{assum:b1}
b(x)=c d^{\alpha-\frac{\alpha p}{2}}(x)k^p\left(d(x)\right)+o\left(d^{\alpha-\frac{\alpha p}{2}}(x)k^p(d(x))\right)\ \text{as}\ d(x)\to 0,\tag{B1}
\end{align}
\end{description}
where $k\in\mathcal{K}$, the set of all positive, monotonic functions $k\in L^{1}(0,\nu)\cap C^{1}(0,\nu)$ satisfying
\begin{align}\label{eq:cls:k}
\int_{0}^{t} s^{-\alpha/2}k(s)\, ds=:K(t),\ \text{and}\ \lim_{t\to 0+}\left(\dfrac{K(t)}{t^{-\alpha/2}k(t)}\right)^{(i)}=l_i,\ i=\{0,1\}.
\end{align}
The class of functions $\mathcal{K}$ was introduced by C\^{i}rstea and R\u{a}dulescu~\cite[p-449]{radu2002} and Mohammed~\cite[p-484]{mohm2007} for nondecreasing and nonincreasing functions respectively. 

\begin{remark}\label{rem:k:lim}
It is also noted that
\begin{align*}
l_0=0,\ \text{for any}\ k\in\mathcal{K},\quad \lim_{t\to 0+}\dfrac{K(t)(t^{-\alpha/2}k(t))'}{t^{-\alpha}k^2(t)}=1-l_1.
\end{align*}
Furthermore, we note that $0\le l_1\le 1$ if $k$ is nondecreasing and $l_1\ge 1$ if $k$ is nonincreasing.
\end{remark}
\begin{remark}
One may think of~\eqref{assum:b1} as the following possible cases compatible with our analysis here.
\begin{itemize}
\item[Case 1.] For $\alpha-\frac{\alpha p}{2}\ge 0$ while $k>0$ and a slowly varying function.
\item[Case 2.] For $\alpha-\frac{\alpha p}{2}< 0$ while $k(t)>0$ and faster growth at zero than $t^{\alpha-\frac{\alpha p}{2}}$, i.e., $k\in NRVZ_{l_1^{-1}-1+\alpha/2}$, see Definition~\ref{def:nrvz}.
\end{itemize}
\end{remark}
We shall use the following definitions of solution, sub- and supersolution in  Sobolev space $W^{1,p}_{\rm loc}(\B)$.
\begin{definition}\label{def:sub:sol}
A function $u\in W^{1,p}_{\rm loc}(\B)\cap C(\B)$ is a positive (weak) {\em supersolution} (respectively, {\em subsolution}) of~\eqref{eq:1}
if
\begin{align}\label{eq:5}
\int_{\B}\Phi_p(\nabla u)\nabla \phi\, d^{\alpha}dx  + \int_{\B} b(x)f(u) \phi\, dx \ge (\text{respectively,}\ \le)\ 0,
\end{align}
for all $0\le \phi \in W^{1,p}_{c}(\B)$.

A function $u\in W^{1,p}_{\rm loc}(\B)\cap C(\B)$ is a (weak) {\em solution }of~\eqref{eq:1} if the equality of~\eqref{eq:5} holds for all $\phi \in W^{1,p}_{c}(\B)$. The large solution should be interpreted as follows:\\
For every positive integer $k$ we have $k-u\le 0$ on $\partial\B$ in the sense that $(k-u)^{+}\in W^{1,p}_{0}(\B)$.
\end{definition}
\begin{remark}
The above integrals in~\eqref{eq:5} is well-defined by assumptions~\eqref{assum:b1}-\eqref{assum:f1}.
\end{remark}

 We stated the main results in the following theorems.

\begin{theorem}[Existence of blow-up solutions]\label{thm:exis:blw}
Let $\B\subset\mathbb{R}^{N}$ be bounded domain and $p>1$. Suppose that $f$ satifies~\ref{itm:f2} and $b\in C(\overline{\B})$ satisfies~\ref{itm:b1}. Then~\eqref{eq:1}-\eqref{eq:2} admits a nonnegative solution $u\in W^{1,p}_{\rm loc}(\B)\cap C^{1,\beta}(\B),\ \beta\in(0,1)$.
\end{theorem}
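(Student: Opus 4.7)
The plan is to realize the large solution as the pointwise monotone limit of standard Dirichlet problems with boundary data $k\to\infty$, and to obtain a uniform interior upper bound via the Keller--Osserman condition \eqref{assum:f2}.

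\medskip

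\emph{Step 1 (finite boundary-value problems).} For each positive integer $k$ I would first solve
\begin{equation*}
\Delta_{d^{\alpha},p} u_{k} = b(x)f(u_{k}) \ \text{in}\ B, \qquad u_{k} = k \ \text{on}\ \partial B,
\end{equation*}
in the weighted Sobolev sense associated with the weight $d^{\alpha}$; since $-1<\alpha<p-1$, this weight lies in the Muckenhoupt class $A_{p}$, so the existence/uniqueness theory for the weighted $p$-Laplacian applies. The constant $0$ is a subsolution and the constant $k$ a supersolution of this Dirichlet problem (because $f\ge 0$ and $f(k)\ge 0$), so a standard monotone iteration (or direct minimization of the weighted energy) produces a solution $u_{k}\in W^{1,p}(B,d^{\alpha})\cap C(\overline{B})$ with $0\le u_{k}\le k$. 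The known interior $C^{1,\beta}$ regularity for the degenerate $p$-Laplacian then yields $u_{k}\in C^{1,\beta}_{\mathrm{loc}}(B)$.

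\medskip

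\emph{Step 2 (monotonicity in $k$).} Since $u_{k}\le u_{k+1}$ on $\partial B$ and both satisfy the same PDE with $f$ nondecreasing, the weak comparison principle (Theorem \ref{thm:comp:1}) gives $u_{k}\le u_{k+1}$ throughout $B$.

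\medskip

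\emph{Step 3 (uniform local bound via Keller--Osserman).} Fix $K\Subset B$; I need $\sup_{K} u_{k}$ bounded independently of $k$. Pick an open set $U$ with $K\Subset U\Subset B$; on $\overline{U}$ the weight $d^{\alpha}$ is bounded above and below by positive constants and $b\ge b_{0}:=\min_{\overline{U}} b>0$. Using \eqref{assum:f2}, the function
\begin{equation*}
\Psi(s)=\int_{s}^{\infty} F^{-1/p}(t)\,dt
\end{equation*}
is finite for $s>0$ and strictly decreasing to $0$, so its inverse $\psi=\Psi^{-1}:(0,\Psi(0^{+}))\to(0,\infty)$ is well defined and blows up at $0^{+}$. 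I would then build a local barrier of the form $v(x)=A\,\psi(\eta-\mathrm{dist}(x,\partial U))$ on a thin inward shell of $U$, with $A>0$ and $\eta>0$ chosen so that, after absorbing the (locally two-sided) bounds on $d^{\alpha}$ and the lower bound $b_{0}$, $v$ is a genuine supersolution of \eqref{eq:1} on the shell and $v\to\infty$ at $\partial U$. In particular $v\ge u_{k}$ on $\partial U$ for every $k$ (since $u_{k}$ is continuous on $\overline{B}$), whence Theorem \ref{thm:comp:1} yields $u_{k}\le v$ in $U$, which on $K$ gives the desired uniform bound.

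\medskip

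\emph{Step 4 (passage to the limit and boundary behavior).} By Steps 2--3, $u:=\lim_{k\to\infty} u_{k}$ is finite on $B$, and the local $C^{1,\beta}$ estimates for the weighted $p$-Laplacian (uniform on each $K\Subset B$) allow passage to the limit in the weak formulation: $u\in W^{1,p}_{\mathrm{loc}}(B)\cap C^{1,\beta}(B)$ is a weak solution of \eqref{eq:1}. For \eqref{eq:2}, given $M>0$ pick $k>M$; continuity of $u_{k}$ up to $\partial B$ with $u_{k}\equiv k$ on $\partial B$ gives $u_{k}\ge M$ on a collar $\{d(x)<\delta_{k}\}$, so $u\ge u_{k}\ge M$ there, i.e.\ $u(x)\to\infty$ as $d(x)\to 0$.

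\medskip

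The main obstacle is Step 3: one has to exhibit a bona fide local supersolution that blows up on $\partial U$, extracted from the Keller--Osserman integrability of $f$ alone and compatible with both the degenerate/singular weight $d^{\alpha}$ and the coefficient $b$. The construction is essentially the radial ODE reduction (where the weight and the coefficient contribute only through their local two-sided bounds on $\overline{U}$), but care is needed in choosing the scaling constants $A$ and $\eta$ so that the ODE inequality is satisfied strictly after these absorptions.
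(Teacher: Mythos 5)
Your overall strategy coincides with the paper's: solve the Dirichlet problems with boundary data $k$ (the paper's Step 1, via Theorem~\ref{thm:1eaux}), order them in $k$ by the comparison Lemma~\ref{thm:comp:1} (Step 2), obtain uniform interior bounds from a Keller--Osserman barrier (Step 3), and pass to the limit using the DiBenedetto--Tolksdorf interior $C^{1,\beta}$ estimates and Arzel\`a--Ascoli (Step 4). Your Steps 1, 2 and 4 match the paper's essentially verbatim.

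The gap is in your Step 3. First, the barrier $v(x)=A\,\psi(\eta-\mathrm{dist}(x,\partial U))$ does not blow up at $\partial U$: as $x\to\partial U$ its argument tends to $\eta$, so $v\to A\psi(\eta)<\infty$; it blows up on the inner sphere $\{\mathrm{dist}(x,\partial U)=\eta\}$ instead, which is the opposite of what you need. More importantly, even with the orientation corrected (say $v=\psi(c\,\mathrm{dist}(x,\partial U))$), a supersolution defined only on a thin shell cannot be compared with $u_k$ via Lemma~\ref{thm:comp:1}: that lemma requires $(v-u_k)^-$ to vanish (in the Sobolev sense) on the whole boundary of the comparison domain, i.e.\ $v\ge u_k$ on both components of the shell's boundary, and on the inner component you have no bound on $u_k$ uniform in $k$ --- that is precisely the estimate Step 3 is supposed to deliver, so the argument as written is circular. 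The paper avoids this by taking the supersolution to be a genuine blow-up solution $w$ of the frozen-coefficient problem ${\rm div}(d^{\alpha}\Phi_p(\nabla u))=mf(u)$, $m=\min b$, on an \emph{entire} concentric ball (Theorem~\ref{thm:exis:1a}), whose existence and local finiteness rest on the radial a priori estimate $u\le\phi(d(x,\partial B_R(x_0)))$ of Theorem~\ref{thm:aux:blw:exis}; then the only boundary on which ordering must be checked is the one where $w=+\infty$. A final, smaller point: the theorem's hypothesis is the weighted Keller--Osserman condition \ref{itm:f2}, not \eqref{assum:f2}; on a compact subset the weight is two-sided bounded so the distinction is harmless there, but the barrier should be extracted from the condition actually assumed, which is what the integral \eqref{eq:ko:cond} in Theorem~\ref{thm:aux:blw:exis} does.
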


\begin{theorem}[First order blow-up rate]\label{thm:main:gen}
Let $f\in RV_{\sigma+1}$ with $\sigma>p-2$. Furthermore, 
assume that the assumption~\ref{assum:B2} for $b$ 
on $\partial\B$.
Then any local weak solution $u_{\sigma}$ of
\begin{align}\label{eq:gen:1}
\begin{cases}
\Delta_{d^{\alpha},p} u={\rm div}(d^{\alpha}\Phi_{p}(\nabla u)) = b(x)f(u)\qquad \text{in}\ \B,\\
u(x)\to \infty\ \ \text{as}\ x\to \partial\B,
\end{cases}
\end{align}
satisfies
\begin{align}\label{eq:growth1}
\xi_{2}\le \liminf_{d(x)\to 0}\frac{u(x)}{\phi(K(d(x)))}\le \limsup_{d(x)\to 0}\frac{u(x)}{\phi(K(d(x)))}\le \xi_{1},
\end{align}
where 
\begin{align}
\xi_{1}:=\left(\dfrac{2+l_1(1-\alpha)(2+\sigma-p)}{b_1(2+\sigma)}\right)^{1/(2+\sigma -p)},\quad \xi_{2}:=\left(\dfrac{2+l_1(1-\alpha)(2+\sigma-p)}{b_2(2+\sigma)}\right)^{1/(2+\sigma -p)}
\end{align}
and $\phi$ is defined by
\begin{align}\label{eq:inv}
\int_{\phi(t)}^{\infty}\dfrac{ds}{(qF(s))^{1/p}}= t>0.
\end{align}
In particular, when $b_1=b_2=c$ in~\ref{assum:B2}, then $u_{\sigma}$ satisfies
\begin{align}\label{eq:growth}
\lim_{d(x)\to 0}\dfrac{u_{\sigma}(x)}{\phi(K(d(x)))}=\xi_0,\ \text{where}\ \xi_{0}:=\left(\dfrac{2+l_1(1-\alpha)(2+\sigma-p)}{c(2+\sigma)}\right)^{1/(2+\sigma -p)},
\end{align}
\end{theorem}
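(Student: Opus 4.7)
The plan is to sandwich any blow-up solution $u$ between super- and subsolutions of the explicit form $(\xi_1+\varepsilon)\phi(K(d(x)))$ and $(\xi_2-\varepsilon)\phi(K(d(x)))$ on a one-sided neighborhood $A_\delta:=\{x:0<d(x)<\delta\}$ of $\partial\B$, then invoke the comparison principle (Theorem~\ref{thm:comp:1}) and let $\varepsilon,\delta\to 0$. This ansatz is natural: the function $\phi$ of~\eqref{eq:inv} inverts the Keller--Osserman integral of $f$, while $K$ in~\eqref{eq:cls:k} encodes the combined weight of the degeneracy and the coefficient $b$, so $\phi(K(d))$ has precisely the right divergence rate at $\partial\B$. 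Spherical symmetry makes $d(x)=1-|x-x_0|$ smooth away from $x_0$, so all computations reduce to calculus in the single variable $d$.

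The computational core is the leading-order asymptotic of
\begin{equation*}
\mathcal{R}_\xi(x):=\frac{\Delta_{d^\alpha,p}\bigl(\xi\phi(K(d(x)))\bigr)}{b(x)\,f\bigl(\xi\phi(K(d(x)))\bigr)}\qquad\text{as}\ d(x)\to 0.
\end{equation*}
Differentiating~\eqref{eq:inv} gives $\phi'(t)=-(qF(\phi(t)))^{1/p}$, while~\eqref{eq:cls:k} gives $K'(t)=t^{-\alpha/2}k(t)$. Writing $v(x)=\xi\phi(K(d(x)))$ in radial coordinates, the scalar flux is
\begin{equation*}
d^{\alpha}\bigl|\nabla v\bigr|^{p-2}\partial_r v \;=\; \xi^{p-1}\,d^{\alpha(3-p)/2}\,k(d)^{p-1}\bigl(qF(\phi(K(d)))\bigr)^{(p-1)/p},
\end{equation*}
and differentiating once more in $d$ splits into two product-rule pieces. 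The first, from the derivative of $(qF(\phi))^{(p-1)/p}$, simplifies via Karamata's theorem $F\in RV_{\sigma+2}$ with $F(t)\sim tf(t)/(\sigma+2)$ together with $f(\xi t)/f(t)\to\xi^{\sigma+1}$, giving (after dividing by $b(x)f(\xi\phi(K(d)))$) a constant multiple of $\xi^{p-2-\sigma}$. The second, from differentiating $d^{\alpha(3-p)/2}k(d)^{p-1}$, is controlled by the class-$\mathcal{K}$ limit $K(t)(t^{-\alpha/2}k(t))'/(t^{-\alpha}k^2(t))\to 1-l_1$ of Remark~\ref{rem:k:lim} and supplies the remaining $l_1$-dependent piece. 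Assembling them gives
\begin{equation*}
\lim_{d(x)\to 0}\mathcal{R}_\xi(x) \;=\; \frac{2+l_1(1-\alpha)(2+\sigma-p)}{b_\star(2+\sigma)\,\xi^{2+\sigma-p}},
\end{equation*}
where $b_\star\in\{b_1,b_2\}$ according to whether $\liminf$ or $\limsup$ in~\ref{assum:B2} is taken. The constants $\xi_1,\xi_2$ are defined precisely so this limit equals $1$.

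Consequently, for every small $\varepsilon>0$ there is $\delta>0$ such that $(\xi_1+\varepsilon)\phi(K(d))$ is a supersolution and $(\xi_2-\varepsilon)\phi(K(d))$ is a subsolution of~\eqref{eq:gen:1} on $A_\delta$. To compare with $u$ at the inner face $\{d(x)=\delta\}$, where both are finite, I apply a small distance shift: replace $d$ by $d-\tau$ for $\tau\in(0,\delta)$, which forces the barriers to diverge on an interior sphere and so to dominate (resp.\ be dominated by) $u$ there. Applying Theorem~\ref{thm:comp:1} on the shifted annulus and letting $\tau\downarrow 0$ yields $(\xi_2-\varepsilon)\phi(K(d))\le u\le(\xi_1+\varepsilon)\phi(K(d))$ on $A_\delta$. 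Sending $\varepsilon\to 0$ produces~\eqref{eq:growth1}, and when $b_1=b_2=c$ the two sides coincide, giving~\eqref{eq:growth}.

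The main obstacle is the leading-order calculation in the second step: several competing terms from the product and chain rules must be gathered, and only through a careful interplay of the $RV_{\sigma+1}$-index of $f$, the weight exponent $\alpha$, and the class-$\mathcal{K}$ constant $l_1$ does the numerator collapse to the exact expression $2+l_1(1-\alpha)(2+\sigma-p)$. A secondary but real complication is that both $u$ and the barriers diverge at $\partial\B$, so a direct comparison on $A_\delta$ is unavailable; the $\tau$-shift (or a Safonov-style sliding) is needed to reduce the problem to a setting where the classical comparison of Theorem~\ref{thm:comp:1} applies.
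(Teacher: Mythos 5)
Your overall strategy --- barriers of the form $\xi^{\pm}\phi(K(d\pm\rho))$, an asymptotic computation of $\Delta_{d^{\alpha},p}(\xi\phi(K(d)))/\bigl(b(x)f(\xi\phi(K(d)))\bigr)$ driven by the regular-variation index of $f$ and the class-$\mathcal{K}$ limits of Remark~\ref{rem:k:lim}, followed by the comparison principle on a boundary collar and the limits $\rho\to 0$, $\varepsilon\to 0$ --- is exactly the route the paper takes. Your two ``product-rule pieces'' correspond to the paper's decomposition into the terms $\mathcal{D}_{1}^{\pm},\mathcal{D}_{2}^{\pm},\mathcal{D}_{3}^{\pm}$, evaluated via Corollary~\ref{cor:aux:freg} and Remark~\ref{rem:k:lim}.

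There is, however, one step that fails as written: the assertion that the $\tau$-shift lets you ``compare with $u$ at the inner face $\{d(x)=\delta\}$.'' The shift only relocates the barrier's singularity to the interior sphere $\{d=\tau\}$ (resp.\ keeps the subsolution finite on $\partial\B$ where $u=\infty$); it settles the ordering on \emph{that} component of the boundary of the annulus $\{\tau<d<\delta\}$, but does nothing on the other component $\{d=\delta\}$, where $u$ and the barrier are both finite and a priori unordered. Lemma~\ref{thm:comp:1} needs $(\bar v-\underline v)^{-}\in W^{1,p}_{0}$ of the whole comparison region, so the ordering must be secured on both faces. The paper closes this by adding a large constant $M$, chosen so that $u\le w^{-}+M$ and $w^{+}\le u+M$ on the inner sphere; this is legitimate because $f$ is nondecreasing, so $w^{-}+M$ is still a supersolution, and the $M$ is harmless in the end since one divides by $\phi(K(d))\to\infty$. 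Without this (or an equivalent device) your sandwich on $A_{\delta}$ is not justified. A secondary caution: the paper's own evaluation of the bracket limit produces the numerator $p+l_{1}(1-\alpha)(2+\sigma-p)$, whereas the theorem statement and your claimed limit carry $2+l_{1}(1-\alpha)(2+\sigma-p)$; these coincide only when $p=2$, so when you actually assemble the competing terms you should verify which constant your computation delivers.
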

The uniqueness of solutions of~\eqref{eq:1} can be provided under some additional assumption of $f$ as follows.
\begin{align}\label{eq:cond:f:unique}
f\ge 0,\quad f(t)/t^{p-1}\ \text{is nondecreasing on}\ (0,\infty).
\end{align}
It is worth to note that if $f\in RV_{\sigma +1}$ satisfies condition~\eqref{eq:cond:f:unique}, then $\sigma>p-2$.
\begin{theorem}[Uniqueness of large solutions]\label{thm:unique}
Let $f\in RV_{\sigma+1}$ with $\sigma>p-2,\ p>1+\alpha$ satisfy \ref{itm:f1} and~\eqref{eq:cond:f:unique}. Furthermore, 
assume that the assumption~\ref{assum:B2} for $b$ 
on $\partial\B$. Then~\eqref{eq:1} admits a unique weak solution in $W^{1,p}_{\rm loc}(\B)\cap C^{1,\alpha}(\B)$.
\end{theorem}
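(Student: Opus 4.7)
Existence of a large solution in $W^{1,p}_{\rm loc}(\B) \cap C^{1,\alpha}(\B)$ comes from Theorem~\ref{thm:exis:blw} combined with standard DiBenedetto--Tolksdorf interior regularity theory; the condition $p > 1+\alpha$ places $d^\alpha$ in the Muckenhoupt class $A_p$, so the classical $C^{1,\alpha}$ estimates apply to the weighted operator. What remains is uniqueness.

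Let $u_1, u_2$ be two large solutions of~\eqref{eq:1}--\eqref{eq:2}. Since $\Phi_p$ is $(p-1)$-homogeneous, $\Delta_{d^\alpha,p}(\lambda u_1) = \lambda^{p-1} b(x) f(u_1)$ for every $\lambda > 0$, and the monotonicity of $t \mapsto f(t)/t^{p-1}$ from~\eqref{eq:cond:f:unique} gives $\lambda^{p-1} f(t) \le f(\lambda t)$ whenever $\lambda \ge 1$; hence $v_\lambda := \lambda u_1$ is a weak supersolution of~\eqref{eq:1}. Applying Theorem~\ref{thm:main:gen} to both $u_i$ yields constants $\xi_1 \ge \xi_2 > 0$ with
\[
\xi_2 \le \liminf_{d(x)\to 0} \frac{u_i(x)}{\phi(K(d(x)))} \le \limsup_{d(x)\to 0} \frac{u_i(x)}{\phi(K(d(x)))} \le \xi_1, \qquad i = 1,2.
\]
Writing $\Omega_\delta := \{x \in \B : d(x) > \delta\}$, it follows that for every $\lambda > \xi_1/\xi_2$ and all sufficiently small $\delta > 0$ one has $v_\lambda > u_2$ on the level set $\{d(x) = \delta\}$. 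The weighted comparison principle (Theorem~\ref{thm:comp:1}) applied on $\Omega_\delta$ to the subsolution $u_2$ and supersolution $v_\lambda$ then gives $u_2 \le \lambda u_1$ in $\Omega_\delta$, and hence in $\B$ after $\delta \to 0^+$.

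To sharpen the inequality $u_2 \le \lambda u_1$ from $\lambda > \xi_1/\xi_2$ down to $\lambda = 1$, I would invoke a Díaz--Saa/Picone identity adapted to the weight $d^\alpha$. Plugging the truncated test functions
\[
\varphi_1 := \frac{(u_1^p - u_2^p)_+}{u_1^{p-1}}\,\eta_\delta, \qquad \varphi_2 := \frac{(u_1^p - u_2^p)_+}{u_2^{p-1}}\,\eta_\delta,
\]
where $\eta_\delta \in C_c^\infty(\Omega_\delta)$ is a cut-off equal to $1$ on $\Omega_{2\delta}$, into the weak formulations for $u_1$ and $u_2$ respectively and subtracting, the pointwise ``hidden convexity'' inequality underlying Díaz--Saa produces
\[
0 \le \int_{\Omega_\delta} b(x)\Bigl(\frac{f(u_1)}{u_1^{p-1}} - \frac{f(u_2)}{u_2^{p-1}}\Bigr)(u_1^p - u_2^p)_+\,\eta_\delta \, dx \le \mathcal{R}_\delta,
\]
the left integrand being non-negative by~\eqref{eq:cond:f:unique} and $\mathcal{R}_\delta$ collecting the error contributions of $\nabla \eta_\delta$. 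Provided $\mathcal{R}_\delta \to 0$ as $\delta \to 0^+$, the integral vanishes, forcing $(u_1^p - u_2^p)_+ \equiv 0$; a symmetric argument then yields $u_1 \equiv u_2$.

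The hard part is the estimate $\mathcal{R}_\delta \to 0$. It requires a uniform control of $d^\alpha |\nabla u_i|^{p-1}(u_1^p-u_2^p)_+/u_i^{p-1}$ on a shell of thickness $\delta$ around $\partial \B$: the preliminary bound $u_2 \le \lambda u_1$ already established takes care of the ratio factor, while the blow-up rate from Theorem~\ref{thm:main:gen} combined with a weighted Caccioppoli estimate governs $d^\alpha|\nabla u_i|^p$; the assumption $p > 1+\alpha$ is exactly what makes these two ingredients integrable against the cut-off. Carrying out this quantitative analysis is the technical core of the proof.
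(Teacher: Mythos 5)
Your first two steps are sound: $\lambda u_1$ is indeed a weak supersolution for $\lambda\ge 1$ by \eqref{eq:cond:f:unique} and the $(p-1)$-homogeneity of $\Phi_p$, and combining the two-sided rate of Theorem~\ref{thm:main:gen} with the comparison principle legitimately yields $u_2\le\lambda u_1$ for every $\lambda>\xi_1/\xi_2$, hence $u_2\le(\xi_1/\xi_2)u_1$ and symmetrically. But the proof then stops where the real work begins: the D\'iaz--Saa/Picone step is only described, and its decisive ingredient, $\mathcal{R}_\delta\to 0$, is precisely the kind of estimate that is \emph{not} routine for explosive solutions. The cut-off error is supported in the shell $\{\delta<d(x)<2\delta\}$, where $u_i\sim\xi\,\phi(K(d))\to\infty$ and $d^\alpha|\nabla u_i|^{p-1}$ blows up at a rate governed by $\phi'(K(d))K'(d)$; whether the product $d^\alpha|\nabla u_i|^{p-1}|\nabla\eta_\delta|\,(u_1^p-u_2^p)_+/u_i^{p-1}$ is integrable and tends to zero depends on $\sigma$, $\alpha$ and $k$, not just on $p>1+\alpha$, and you give no computation. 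Also note that the test functions $\varphi_i$ require $u_i$ bounded away from zero on ${\rm supp}\,\eta_\delta$, which does not follow from anything established (the paper only assumes $f(0)=0$ and produces nonnegative solutions). As written, the proposal is an incomplete proof.

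You also missed that your own first step already finishes the proof in the only case the paper's argument actually covers. When $b_1=b_2=c$ in \ref{assum:B2}, Theorem~\ref{thm:main:gen} gives the exact limit \eqref{eq:growth}, so $\xi_1/\xi_2=1$ and $u_2\le u_1\le u_2$ follows with no Picone identity at all. The paper's proof runs exactly this way: it deduces $\lim_{d(x)\to 0}u_1(x)/u_2(x)=1$ from the blow-up rate, observes that $U_\pm=(1\pm\varepsilon)u_2$ are super-/subsolutions by \eqref{eq:cond:f:unique}, compares on the interior region $\B\setminus\B_\delta$ (where the boundary data are finite, via the Dirichlet solvability of Theorem~\ref{thm:1eaux}), and lets $\varepsilon\to 0$. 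In the genuinely two-constant case $b_1<b_2$ the ratio limit is not $1$ and neither your sketch nor the paper's argument closes the gap; if you want to salvage your D\'iaz--Saa route for that case you must actually carry out the boundary-layer estimate, which is the whole difficulty.
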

The second order blow-up rate required the following additional assumptions on the class of functions $\mathcal{K}$.
We shall denote for $0\le l_1\le 1$, $\mathcal{K}_{l_1}=\mathcal{K}_{0}\cup\mathcal{K}_{(0,1]}$, where
\begin{align*}
\mathcal{K}_{0}:=\{k\in\mathcal{K}:\ l_1 = 0 \},\quad
\mathcal{K}_{(0,1]}:=\{k\in\mathcal{K}:\ l_1 = (0,1] \}.
\end{align*}
Some basic examples of $k\in\mathcal{K}_{l_1}$ are
$(i)$ $k(t)=t^q,\ q>\alpha/2,\ l_1 =(1+q-\alpha/2)^{-1}$;
 $(ii)$ $k(t)=\ln(1+ t^q),\ q>1+\alpha/2,\ l_1 =(1+q-\alpha/2)^{-1}$;
 $(iii)$ $k(t)=\exp(t^q)-1,\ q>1+\alpha/2,\ l_1 =(1+q-\alpha/2)^{-1}$.
 
For the purpose of getting second order behavior of the large solution, we want an additional assumption for the class $\mathcal{K}_{l_1}$, namely, for $\tau,\zeta>0$, we define
\begin{align*}
\mathcal{K}_{[0,1],y}:=&
\left\{k\in\mathcal{K}_{l_1}:\ \lim\limits_{t\to 0+}\frac{1}{y(t)}\left( \left(\dfrac{K(t)}{t^{-\alpha/2}k(t)}\right)^{(1)}-l_1\right)=e_k\in \mathbb{R}\right\};\\
\text{in particular,}&\\
\mathcal{K}_{(0,1],\zeta}:=&
\left\{k\in\mathcal{K}_{(0,1]}:\ \lim\limits_{t\to 0+}\frac{1}{t^{\zeta}}\left( \left(\dfrac{K(t)}{t^{-\alpha/2}k(t)}\right)^{(1)}-l_1\right)=E_k\in \mathbb{R}\right\};\\
\mathcal{K}_{(0,1],\tau}:=&
\left\{k\in\mathcal{K}_{(0,1]}:\ \lim\limits_{t\to 0+}\frac{1}{(-\ln t)^{-\tau}}\left( \left(\dfrac{K(t)}{t^{-\alpha/2}k(t)}\right)^{(1)}-l_1\right)=L^{\#}\in \mathbb{R}\right\};\\
\mathcal{K}_{0,\tau}:=&
\left\{k\in\mathcal{K}_{0}:\ \lim\limits_{t\to 0+}\frac{1}{(-\ln t)^{-\tau}} \left(\dfrac{K(t)}{t^{-\alpha/2}k(t)}\right)^{(1)}=L^{*}\in \mathbb{R}\right\};\\
\mathcal{K}_{0,\zeta}:=&
\left\{k\in\mathcal{K}_{0}:\ \lim\limits_{t\to 0+}\frac{1}{t^{\zeta}} \left(\dfrac{K(t)}{t^{-\alpha/2}k(t)}\right)^{(1)}=L_{*}\in \mathbb{R}\right\} .
\end{align*}
The second order behaviors for the semilinear problem read as follows.
\begin{theorem}\label{thm:p:2}
Let $f\in RV_{\sigma+1}$ with $\sigma>0$. Furthermore, 
assume that $b$ satisfying~\eqref{assum:b2} 
nearby $\partial\B$:
\begin{equation}\label{assum:b2}
b(x)=k^2(d(x))\left(1+B_{0} d^{\theta}(x)+o(d^{\theta}(x)\right),\ \text{for some constant}\ B_{0},\theta>0\ \text{and}\ k\in\mathcal{K}_{0,\tau}.\tag{B}
\end{equation}
Then the second order blow up rate estimate of any local weak solution $u_{\sigma}$ of
\begin{align}\label{eq:gen:2}
\begin{cases}
\Delta_{d^{\alpha},2} u:={\rm div}(d^{\alpha}\nabla u) = b(x)f(u)\qquad \text{in}\ \B,\\
u(x)\to \infty\ \ \text{as}\ x\to \partial\B,
\end{cases}
\end{align}
is given by
\begin{align}\label{eq:growth:2nd:a}
u_{\sigma}(x)=\xi_0\phi(K(d(x)))\left[1+\chi (-\ln d(x))^{-\tau} +o((-\ln d(x))^{-\tau}) \right],
\end{align}
where
\begin{align}
\xi_0:=\left(\frac{2}{2+\sigma} \right)^{\frac{1}{\sigma}},\quad \chi:=\dfrac{(1-\alpha)\sigma L^{*}}{(p-1)(3+\sigma)},
\end{align}
and $\phi$ be uniquely determined by
\begin{align}\label{eq:inv:2}
\int_{\phi(t)}^{\infty}\dfrac{ds}{(2F(s))^{1/2}}= t>0.
\end{align}
\end{theorem}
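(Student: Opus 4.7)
Plan: The proof upgrades the first-order rate from Theorem~\ref{thm:main:gen} to a second-order expansion by a sharp barrier construction followed by the weak comparison principle of Theorem~\ref{thm:comp:1}. For $\epsilon>0$ small and $\delta=\delta(\epsilon)>0$ to be chosen, I would define, on the shell $\Omega_\delta:=\{x\in\B:0<d(x)<\delta\}$,
\[
U^\pm_\epsilon(x):=\xi_0\,\phi(K(d(x)))\bigl[1+(\chi\pm\epsilon)(-\ln d(x))^{-\tau}\bigr],
\]
and then show $U^-_\epsilon\le u_\sigma\le U^+_\epsilon$ in $\Omega_\delta$, concluding by sending $\epsilon\to 0$.

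Because $\B$ is a ball, $d$ is smooth near $\partial\B$ with $|\nabla d|=1$ and $\Delta d=-(N-1)/(1-d)$, so for a distance-based profile $v(x)=V(d(x))$ with $p=2$ we have
\[
\operatorname{div}(d^\alpha\nabla v)=d^\alpha V''(d)+d^{\alpha-1}\Bigl[\alpha-\tfrac{(N-1)d}{1-d}\Bigr]V'(d).
\]
Applied with $V=V^\pm_\epsilon(t):=\xi_0\phi(K(t))[1+(\chi\pm\epsilon)(-\ln t)^{-\tau}]$, and using $K'(t)=t^{-\alpha/2}k(t)$ and $\phi'(s)=-\sqrt{2F(\phi(s))}$ from \eqref{eq:inv:2}, this yields an asymptotic expansion in which the regular variation $f\in RV_{\sigma+1}$ (hence $F\in RV_{\sigma+2}$ with $F(s)\sim sf(s)/(\sigma+2)$ by Karamata) plus the uniform convergence theorem for regularly varying functions convert the multiplicative factor $(\chi\pm\epsilon)(-\ln d)^{-\tau}$ inside $f$ into an additive $(\sigma+1)(\chi\pm\epsilon)(-\ln d)^{-\tau}$ correction. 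The hypothesis $k\in\mathcal{K}_{0,\tau}$ injects the $L^*$-term via $(K(t)/(t^{-\alpha/2}k(t)))^{(1)}=L^*(-\ln t)^{-\tau}+o((-\ln t)^{-\tau})$, while the perturbation $B_0 d^\theta$ in $b$ contributes only $o((-\ln d)^{-\tau})$ since $d^\theta$ decays faster than any logarithmic power. At leading order one recovers the balance fixing $\xi_0=(2/(2+\sigma))^{1/\sigma}$ exactly as in \eqref{eq:growth} (with $l_1=0$, $c=1$, $p=2$); at the next order the $(-\ln d)^{-\tau}$ residual is linear in $\chi\pm\epsilon$ with a constant forcing proportional to $L^*$. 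The value of $\chi$ that kills the $\epsilon=0$ residual is precisely $\chi=(1-\alpha)\sigma L^*/((p-1)(3+\sigma))$; substituting $\chi\pm\epsilon$ leaves a residual of definite sign, so $U^+_\epsilon$ is a classical supersolution and $U^-_\epsilon$ a classical subsolution of \eqref{eq:gen:2} in $\Omega_\delta$ for $\delta$ sufficiently small.

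On the inner boundary $\{d(x)=\delta\}$, Theorem~\ref{thm:main:gen} guarantees $u_\sigma/(\xi_0\phi(K(d)))=1+o(1)$, so after a harmless shift absorbed into the $o$-term one has $U^-_\epsilon\le u_\sigma\le U^+_\epsilon$ there. On $\partial\B$ all three functions blow up, with the ordering holding in the large-solution sense of Definition~\ref{def:sub:sol}. Theorem~\ref{thm:comp:1} then propagates $U^-_\epsilon\le u_\sigma\le U^+_\epsilon$ throughout $\Omega_\delta$, and dividing by $\xi_0\phi(K(d(x)))(-\ln d(x))^{-\tau}$ and letting $\epsilon\to 0^+$ yields \eqref{eq:growth:2nd:a}.

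The main obstacle is the bookkeeping in the operator expansion: three independent small parameters ($d$, $d^\theta$, $(-\ln d)^{-\tau}$) must be disentangled in $\operatorname{div}(d^\alpha\nabla U^\pm_\epsilon)-b\,f(U^\pm_\epsilon)$ and only the $(-\ln d)^{-\tau}$ coefficient retained, with the equation determining $\chi$ matched exactly on the nose. This hinges on Karamata's theorem, the uniform-convergence form of the representation theorem for regularly varying functions, and Remark~\ref{rem:k:lim} to handle the $l_1=0$ simplifications throughout.
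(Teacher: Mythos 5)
Your construction is essentially the one the paper uses: the paper proves Theorem~\ref{thm:p:2a} by taking $w^{\pm}=\xi_0\phi(K(d^{\pm}))(1+\chi^{\pm}y(d^{\pm}))$ with $\chi^{\pm}$ equal to $\chi$ shifted by $\mp\varepsilon$, expanding $b(x)f(w^{\pm})-\Delta_{d^{\alpha},2}w^{\pm}$ into the terms $I_1,\dots,I_4$ whose limits (Lemma~\ref{lem:lims:ln} for $y(r)=(-\ln r)^{-\tau}$) fix $\xi_0$ at leading order and $\chi$ at order $(-\ln d)^{-\tau}$, and then comparing and letting $\varepsilon\to0$; Theorem~\ref{thm:p:2} is obtained as the specialization $y(r)=(-\ln r)^{-\tau}$. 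Your Karamata bookkeeping (mean value theorem inside $f$, $tf'(t)/f(t)\to\sigma+1$, the $L^{*}$ input from $\mathcal{K}_{0,\tau}$, and $d^{\theta}=o((-\ln d)^{-\tau})$ so the $B_0$-term drops) matches the paper's auxiliary lemmas.

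The genuine gap is in how you invoke the comparison principle. Your barriers $U^{\pm}_{\epsilon}$ are functions of $d(x)$ itself, so both $U^{\pm}_{\epsilon}$ and $u_{\sigma}$ blow up on $\partial\B$, and you assert the ordering there ``in the large-solution sense of Definition~\ref{def:sub:sol}.'' But Lemma~\ref{thm:comp:1} requires $(\bar v-\underline v)^{-}\in W^{1,p}_{0}$, and the large-solution clause of Definition~\ref{def:sub:sol} (that $(k-u)^{+}\in W^{1,p}_{0}$ for every integer $k$) gives no ordering between two functions that both explode: a priori $u_{\sigma}-U^{+}_{\epsilon}$ could stay positive up to $\partial\B$, and the hypothesis of the comparison lemma is simply not checkable. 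The missing device, which the paper supplies, is the translation of the distance function: set $d^{\pm}=d\pm\rho$ and work on the shifted shells $\B^{\pm}_{\rho}$, so that in each of the two comparisons exactly one function is bounded where the other explodes ($w^{+}$, built from $d+\rho$, is continuous up to $\partial\B$ where $u_{\sigma}=+\infty$; $w^{-}$, built from $d-\rho$, explodes on $\{d=\rho\}$ where $u_{\sigma}$ is locally bounded), after which one lets $\rho\to0$ before $\epsilon\to0$. Relatedly, on the inner boundary $\{d=\delta\}$ the paper adds $Mv(x)$ with $-\Delta v=1$, so $v\asymp d$, precisely to guarantee the correction is $o(\phi(K(d))(-\ln d)^{-\tau})$ and does not contaminate the second-order coefficient; your ``harmless shift absorbed into the $o$-term'' implicitly requires $\phi(K(d))(-\ln d)^{-\tau}\to\infty$, which is true for $k\in\mathcal{K}_{0,\tau}$ but should be verified rather than assumed. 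As written, the step ``Theorem~\ref{thm:comp:1} then propagates $U^{-}_{\epsilon}\le u_{\sigma}\le U^{+}_{\epsilon}$ throughout $\Omega_{\delta}$'' does not follow.
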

More generally, we can formulate with the following assumption.
\begin{description}
\item[Hypothesis 1] We assume, for a proper choice of second order approximation $y$ in Theorem~\ref{thm:p:2a}, that the limit
\begin{align}\label{lim:y:assump}
\lim_{r\to 0+}\frac{r^{\theta}}{y(r)}=:G(\theta,y)
\end{align}
exists.
\end{description}
\begin{remark}
The different choices of $y(r)$ gives the different limits in~\eqref{lim:y:assump} as follows.
\begin{align*}
G(\theta,y)=
\begin{cases}
1,\ y(r)=r, \theta=1;\\
0,\ y(r)=r, \theta>1;\\
0,\ y(r)=(-\ln r)^{-\tau},\ \tau,\theta>0;\\
H(\zeta-\theta),\ y(r)=r^{\omega},\ \zeta,\theta>0,\ \omega:=\min\{\zeta,\theta \};\\
\end{cases}
\end{align*}
\end{remark}
\begin{theorem}\label{thm:p:2a}
Let $f\in RV_{\sigma+1}$ with $\sigma>0,\ 2>1+\alpha$ and \ref{itm:f1} hold. Furthermore, 
 assume that $b$ satisfying~\eqref{assum:b2} 
nearby $\partial\B$ for $k\in\mathcal{K}_{(0,1],\zeta},\ \zeta>0$.
Then the second order blow up rate estimate of any local weak solution $u_{\sigma}$ of~\eqref{eq:gen:2} is given by
\begin{align}\label{eq:growth:2nd:b}
u_{\sigma}(x)=\xi_0\phi(K(d(x)))\left(1+\chi y(d(x)) +o(y(d(x))) \right),
\end{align}
where
\begin{align}\label{eq:xi0}
\xi_0:=\left(\frac{2+l_1(1-\alpha)\sigma}{2+\sigma} \right)^{\frac{1}{\sigma}},\quad \chi:=\dfrac{\sigma((1-\alpha/2)e_k-l_1)-B_0(2+\sigma l_1)G(\theta,y)}{\sigma(3+\sigma+l_1)-(\alpha/2)\sigma^2 l_1^2+\alpha\sigma l_1},
\end{align}
$y(t)\in C([0,\nu))$ is a nondecreasing function such that $y(0)=0$ and $t/y(t)\rightarrow 0$ as $t\to 0+$ and $\phi$ be uniquely determined by~\eqref{eq:inv:2}.
\end{theorem}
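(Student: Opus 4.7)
The plan is to sharpen the first-order estimate from Theorem~\ref{thm:main:gen} by constructing perturbed sub- and super-solutions built out of the distance function, and invoking the weak comparison principle of Theorem~\ref{thm:comp:1}.

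First I would reduce the operator to a one-variable expression. Near $\partial\B$ the distance function $d$ is smooth with $|\nabla d|\equiv 1$, and for $u(x)=\Psi(d(x))$ one has
\begin{equation*}
\Delta_{d^{\alpha},2}u = d^{\alpha}\Psi''(d) + \bigl(\alpha\,d^{\alpha-1}+d^{\alpha}\Delta d\bigr)\Psi'(d),
\end{equation*}
with $\Delta d$ the explicit mean-curvature term of the ball. I would then fix the ansatz $\Psi_{\pm}(t)=\xi_0\phi(K(t))\bigl(1+(\chi\pm\varepsilon)y(t)\bigr)$ on a thin layer $\{0<d(x)<\delta\}$ and show that $\Psi_+$ (respectively $\Psi_-$) is a super- (respectively sub-) solution for each small $\varepsilon>0$, provided $\delta=\delta(\varepsilon)$ is sufficiently small.

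The core computational step expands every ingredient as $d\to 0^+$. Differentiating~\eqref{eq:inv:2} gives $\phi''=f(\phi)$ and $(\phi')^2=2F(\phi)$; the definition of $\mathcal{K}$ gives $K'(t)=t^{-\alpha/2}k(t)$, and $k\in\mathcal{K}_{(0,1],\zeta}$ yields the two-term expansion of $K(t)/(t^{-\alpha/2}k(t))$ with leading coefficient $l_1$ and $E_k\,t^{1+\zeta}/(1+\zeta)$ correction. Since $f\in RV_{\sigma+1}$, and hence $F\in RV_{\sigma+2}$, Karamata's uniform-on-compacts theorem yields
\begin{equation*}
f(\Psi_{\pm}) = f\bigl(\xi_0\phi(K(d))\bigr)\bigl(1+(\sigma+1)(\chi\pm\varepsilon)y(d)+o(y(d))\bigr).
\end{equation*}
Combining these with~\eqref{assum:b2} and the normalization $r^{\theta}/y(r)\to G(\theta,y)$ from Hypothesis~1, the leading-order cancellation forces $\xi_0=\bigl((2+l_1(1-\alpha)\sigma)/(2+\sigma)\bigr)^{1/\sigma}$, reproducing Theorem~\ref{thm:main:gen}; the next-order $y(d)$-coefficient receives three distinct contributions --- the $e_k$-term from $\mathcal{K}_{(0,1],\zeta}$, the $B_0$-term from $b$ weighted by $G(\theta,y)$, and a drift/curvature term carrying the $l_1$ factors --- whose vanishing pins down $\chi$ exactly as in~\eqref{eq:xi0}, while the sign of $\varepsilon$ produces the desired strict inequality in the operator.

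Boundary comparison on $\{0<d<\delta\}$ is routine: on the inner face $\{d=\delta\}$ the continuity of $u_{\sigma}$ allows one to shift $\Psi_{\pm}$ by an additive constant without disturbing the $o(y(d))$ correction, while on $\partial\B$ both sides diverge with the same first-order profile from Theorem~\ref{thm:main:gen}. Theorem~\ref{thm:comp:1} then yields $\Psi_-\le u_{\sigma}\le\Psi_+$ in the layer, and letting $\varepsilon\to 0^+$ gives~\eqref{eq:growth:2nd:b}. The main obstacle I anticipate is the uniform matching of the $y(d)$-order remainders across the three regimes of $y$ permitted by Hypothesis~1 (namely $y(t)=t$, $y(t)=t^{\omega}$, and $y(t)=(-\ln t)^{-\tau}$): one must verify that the $o(y)$ remainders from the RV-expansion of $f$, from $k\in\mathcal{K}_{(0,1],\zeta}$, and from the $b$-expansion are all absorbed consistently, which is precisely the bookkeeping role played by $G(\theta,y)$ and its case analysis in the preceding remark.
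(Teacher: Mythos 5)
Your overall strategy coincides with the paper's: build barriers of the form $\xi_0\phi(K(\cdot))\bigl(1+(\chi\pm\varepsilon)y(\cdot)\bigr)$ in a thin collar, expand the weighted Laplacian and the absorption term to order $y(d)$ using $\phi''=f(\phi)$, $(\phi')^2=2F(\phi)$, the $\mathcal{K}_{(0,1],\zeta}$ expansion of $K/(t^{-\alpha/2}k)$, and Karamata uniform convergence (the paper packages this as the limits $I_1,\dots,I_4$ in Lemma~\ref{lem:lims}), and then determine $\xi_0$ and $\chi$ by cancellation at successive orders. That computational core is the right mechanism and matches the paper.

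The gap is in the step you call ``routine'' boundary comparison, and it is genuine in two respects. First, shifting $\Psi_{\pm}$ by an additive \emph{constant} $M$ on the inner face does \emph{not} in general preserve the second-order estimate: one needs $M=o\bigl(y(d)\phi(K(d))\bigr)$, i.e.\ $y(d)\phi(K(d))\to\infty$. Since $\phi\circ K\in NRVZ_{-2/(\sigma l_1)}$ while Hypothesis~1 admits $y(t)=t^{\omega}$ with arbitrary $\omega>0$, the product $y(d)\phi(K(d))\sim d^{\,\omega-2/(\sigma l_1)}$ can tend to $0$ (e.g.\ $\omega>2/(\sigma l_1)$), and the constant shift then swamps the $\chi y(d)$ correction. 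The paper avoids this by adding $Mv(x)$, where $v$ is the torsion function of Proposition~\ref{prop:v} with $c_1d\le v\le c_2d$; then $Mv/(y(d)\phi(K(d)))\le c_2M\,(d/y(d))\,(1/\phi(K(d)))\to0$ precisely because of the standing hypothesis $t/y(t)\to0$ --- that hypothesis exists to make this step work, and your argument does not use it. Second, you cannot invoke Lemma~\ref{thm:comp:1} directly on $\partial\B$ on the grounds that ``both sides diverge with the same first-order profile'': the comparison principle needs $(\bar v-\underline v)^-\in W^{1,p}_0$, which is not available when both functions blow up on the same surface. The paper's device is the translation $d^{\pm}(x)=d(x)\pm\rho$ (so the upper barrier blows up on the interior surface $\{d=\rho\}$ where $u$ is finite, and the lower barrier stays finite on $\partial\B$ where $u$ blows up), followed by $\rho\to0$. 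Both repairs are standard, but as written your comparison step would fail for part of the admissible range of $y$.
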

\begin{remark}
We can choose $y(t)=(-\ln t)^{-\tau}$ in Theorem~\ref{thm:p:2} and the proof will follow the same line with the different limits from the hypothesis to be adapted in Lemma~\ref{lem:lims}.
\end{remark}

We organize the article as follows. In Section~\ref{sec:preli}, we define  weighted Sobolev spaces which arise naturally to study such degenerate problems. In this settings, we define the solutions, sub- and supersolutions to the considered problem and give an adapted comparison principle. It also includes the existence of solution to~\eqref{eq:1} in between sub- and supersolutions which explode on the boundary. In Section~\ref{sec:exis:blw}, using the aforesaid setup and recalling some standard regularity results, we eventually show the existence of the blow-up solutions to problem~\eqref{eq:1}-\eqref{eq:2} in Theorem~\ref{thm:exis:blw}.
In Section~\ref{sec:1d}, we give a direct one dimensional calculation to finding the growth rate of the large solution to~\eqref{eq:1} for the case $f(u)=u^q$. The rate has been derived in terms of parameters depending on the weights involved, decay rate of $b(x)$ towards the boundary, nonlinearity, namely, $q>p-1>\alpha$. In Section~\ref{sec:reg:func}, we recall some of the results from the theory of Karamata's regularly varying functions. These tools essentially conform existence and uniqueness of solution to~\eqref{eq:1} with its first order blow-up rate involving the regularly varying index in Section~\ref{sec:blow:1st} and second order blow-up rate in Section~\ref{sec:2nd:blw}.

\section{Solution, subsolution and supersolution}\label{sec:preli}
We investigate the problem of type~\eqref{eq:1}, where the degeneracy appears at the boundary $\partial\B$ for $\alpha>0$ and/ or around the set $\{x\in\B:\ \nabla u(x)=0 \}$, i.e., the elliptic operator ${\rm div}(d^{\alpha}\Phi_{p}(\nabla\cdot))$ has a nonnegative characterisitc form around those prescribed areas. We say the problem~\eqref{eq:1} is singular for $\alpha<0$. In order to  see  how the degeneracy, appearing for the power of the distance to the boundary $\partial\B$, effects the blow up rate of the large solution to~\eqref{eq:1} at the boundary, we investigate the problem~\eqref{eq:1} in weighted Sobolev space. In particular, we shall adapt the classical comparison principles to weak solutions properly defined in weighted Sobolev spaces, see Definition~\ref{def:sub:sol}. Therefore, we start with the definitions of weighted Sobolev spaces and their properties needed in the present context of the article.
\subsection{Weighted Sobolev spaces and their properties}
Let $0\le w\in L^{1}_{\rm loc}(\B)$ be a locally integrable nonnegative function in $\B$. Then the Radon measure $\mu$ canonically identified with the weight $w(x)$ is defined as $\mu(E)=\int_{E}w(x)\, dx$. Therefore, $d\mu(x)=w(x)dx$, where $dx$ is the $n$-dimensional Lebesgue measure. We say the weight function $w$ is $p$-admissible, see~\cite[Section 1.1]{heino12}. We choose for our particular interest $w(x)=d^{\alpha}(x),\ \alpha>-1$ for which it is a $p$-admissible weight, see~\cite[p-10]{heino12}. Note that, by definition and along with the doubling property, $\mu$ and Lebesgue measure $dx$ are mutually absolutely continuous, i.e., the almost everywhere (a.e.) expressions do not need to mention with respect to which measure separately. We use the notation $u^{+}:=\max(u,0),\ u^{-}:=\max(-u,0)$ and $1/p + 1/p'=1,\ 1<p<\infty$. It is noted that $u=u^{+}-u^{-}$ and $|u|=u^{+}+u^{-}$. Before we proceed to the definitions of weak solutions, sub- and supersolutions to~\eqref{eq:1}, we give a brief introduction to weighted Sobolev spaces.


\begin{definition}{\cite[Section 1.9]{heino12}}\label{def:space}
The weighted Sobolev space $W^{1,p}(\B;d^{\alpha})\ \left(W^{1,p}_{0}(\B;d^{\alpha})\right)$ is defined as the completion of the set
\begin{align*}
\left\{\phi\in C^{\infty}(\B)\ \left(\phi\in C^{\infty}_{0}(\B)\right):\quad \int_{\B}|\phi|^p\, d\mu + \int_{\B}|\nabla\phi|^p\, d\mu=:\|\phi\|_{1,p}^{p}<\infty\right\}.
\end{align*}
\end{definition}
It means that a function $u$ lies in the class $W^{1,p}(\B;d^{\alpha})$ if and only if $u\in L^{p}(\B;d^{\alpha})$ and there exists a vector valued function $z\in L^{p}(\B;d^{\alpha};\mathbb{R}^N)$ such that
\begin{align*}
\int_{\B}|\phi_i-u|^p\, d\mu \stackrel{k\to 0}{\longrightarrow} 0\quad\text{and}\quad \int_{\B}|\nabla\phi_i-z|^p\, d\mu \stackrel{k\to 0}{\longrightarrow} 0,
\end{align*}
for some sequence $\phi_i\in C^{\infty}(\B)$. The function $z$ is called the gradient of $u$ in $W^{1,p}(\B;d^{\alpha})$ and is denoted by $z=\nabla u$. Moreover, $\nabla u$ is uniquely defined function in $L^{p}(\B;d^{\alpha})$. It is noted that $W^{1,p}(\B;d^{\alpha})$ and $W^{1,p}_{0}(\B;d^{\alpha})$ are reflexive Banach spaces under the norm $\|\cdot\|_{1,p}$,~see~\cite[Section 1.9]{heino12}. We note that $W^{1,p}(\Omega;d^{\alpha})$ and $W^{1,p}_{0}(\Omega;d^{\alpha})$ are also defined in a similar way on an open bounded domain $\Omega$.

We say a function $u\in W^{1,p}_{\rm loc}(\B;d^{\alpha})$ if and only if $u\in W^{1,p}(\B';d^{\alpha})$ for each open set $\B'$ whose closure is compact subset of $\B$. A useful lemma below needs to be recalled from~\cite{heino12}.
\begin{lemma}{\cite[Lemma 1.17]{heino12}}\label{lem:aux:1}
If $u\in  W^{1,p}_{0}(\B;d^{\alpha})$ satisfying $\nabla u=0$, then $u=0$.
\end{lemma}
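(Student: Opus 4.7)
My plan is to proceed by approximation, reducing the vanishing-gradient statement for a general $W^{1,p}_0(\B;d^{\alpha})$ function to a weighted Poincaré inequality applied to smooth compactly supported test functions. The argument splits naturally into two pieces: retrieving a defining sequence, and controlling its $L^p(\mu)$ norms by the corresponding weighted gradient norms uniformly in the index.

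\textbf{Step 1: Set up the approximating sequence.} By Definition~\ref{def:space}, since $u\in W^{1,p}_{0}(\B;d^{\alpha})$, there exists a sequence $\phi_i\in C^{\infty}_0(\B)$ such that
\begin{equation*}
\int_{\B}|\phi_i-u|^p\, d\mu\longrightarrow 0,\qquad \int_{\B}|\nabla\phi_i-\nabla u|^p\, d\mu=\int_{\B}|\nabla\phi_i|^p\, d\mu\longrightarrow 0,
\end{equation*}
where I used the hypothesis $\nabla u=0$. Thus $\phi_i\to u$ in $L^p(\B;d^{\alpha})$ and $\nabla\phi_i\to 0$ in $L^p(\B;d^{\alpha};\mathbb{R}^N)$.

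\textbf{Step 2: Apply a weighted Poincaré inequality.} The weight $w(x)=d^{\alpha}(x,\partial\B)$ with $-1<\alpha<p-1$ is $p$-admissible (indeed, it is a Muckenhoupt $A_p$-weight on balls in $\mathbb{R}^N$), so there is a constant $C=C(\B,\alpha,p,N)>0$ such that
\begin{equation*}
\int_{\B}|\phi|^p\, d\mu\le C\int_{\B}|\nabla\phi|^p\, d\mu\qquad\text{for all }\phi\in C^{\infty}_{0}(\B).
\end{equation*}
This is the standard weighted Poincaré inequality available in the $p$-admissible framework of~\cite{heino12}. Applying it to each $\phi_i$ yields
\begin{equation*}
\int_{\B}|\phi_i|^p\, d\mu\le C\int_{\B}|\nabla\phi_i|^p\, d\mu.
\end{equation*}

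\textbf{Step 3: Pass to the limit.} The right-hand side tends to $0$ by Step 1, and $\phi_i\to u$ in $L^p(\B;d^{\alpha})$, so letting $i\to\infty$ gives
\begin{equation*}
\int_{\B}|u|^p\, d\mu\le C\lim_{i\to\infty}\int_{\B}|\nabla\phi_i|^p\, d\mu=0.
\end{equation*}
Since $d\mu=d^{\alpha}\, dx$ is mutually absolutely continuous with Lebesgue measure on $\B$, we conclude $u=0$ a.e.\ in $\B$.

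\textbf{Expected obstacle.} The only nontrivial ingredient is the weighted Poincaré inequality for $d^{\alpha}$ with $-1<\alpha<p-1$. Two paths would work: either invoke the general $p$-admissibility machinery of~\cite{heino12} (which packages doubling plus a $(1,p)$-Poincaré inequality for the measure $\mu$), or verify the Muckenhoupt $A_p$ condition for $d^{\alpha}$ directly on the ball $\B$ and then apply the Fabes--Kenig--Serapioni type Poincaré inequality. The latter is explicit but slightly technical near the boundary; the former is the route taken in~\cite{heino12} and is what I would cite to keep the proof self-contained within the paper's framework.
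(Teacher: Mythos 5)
Your proof is correct. The paper gives no argument of its own for this lemma --- it simply cites \cite[Lemma 1.17]{heino12} --- and your route (take the defining sequence $\phi_i\in C^\infty_0(\B)$, apply the weighted Sobolev--Poincar\'e inequality that comes with $p$-admissibility of $d^\alpha\in A_p$ for $-1<\alpha<p-1$, and pass to the limit using $\nabla u=0$) is precisely the standard argument behind that cited result, so there is nothing to add.
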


In general, we can deal with such weight functions $w$ which either may vanish somewhere in $\overline{\B}$ and/or increase to infinity. In present context, we consider the weight function $w(x):=d^{\alpha}(x)$. Then, for $-1<\alpha<p-1,$ $d^{\alpha}\in A_p$ or in Muckenhoupt class, see~\cite[Chapter 15]{heino12}. 
Note that, when $d\mu(x)=d^{\alpha}(x)dx$, $W^{1,p}_{\rm loc}(\B;d^{\alpha})=W^{1,p}_{\rm loc}(\B)$. We denote the class of functions from $W^{1,p}_{0}(\B)$ with compact support in $\B$ as $W^{1,p}_{c}(\B)$.

The following comparison principle for weak solutions to degenerate quasilinear equations is needed to conclude our final claim with large solution. The proof can be adapted from~\cite[Lemma 3.18]{heino12}. 
\begin{lemma}\label{thm:comp:1}
If $\bar{v}\in W^{1,p}_{\rm loc}(\B)\cap C(\B)$ be a weak supersolution and $\underline{v}\in W^{1,p}_{\rm loc}(\B)\cap C(\B)$ be a weak subsolution to~\eqref{eq:1} in the sense of Definition~\ref{def:sub:sol}. Then the inequality $(\bar{v}-\underline{v})^{-}\in W^{1,p}_{0}(\B)$ implies $\underline{v}\le \bar{v}$ a.e. in $\B$.
\end{lemma}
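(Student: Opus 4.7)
The plan is to use the standard Minty–Browder style comparison argument, choosing the admissible test function $\phi:=(\underline{v}-\bar{v})^{+}=-\,(\bar{v}-\underline{v})^{-}$, which by hypothesis belongs to $W^{1,p}_{0}(\B)$. Since the definition of sub/supersolution in Definition~\ref{def:sub:sol} only tests against $W^{1,p}_{c}(\B)$, I first need to justify that $\phi$ is an admissible test function; I would do so by density, taking an approximating sequence $\phi_{j}\in C_{c}^{\infty}(\B)$ (truncated to be nonnegative) converging to $\phi$ in $W^{1,p}$-norm, plugging $\phi_{j}$ into each of the two weak inequalities, and passing to the limit using the fact that $d^{\alpha}\Phi_{p}(\nabla\bar{v}),\ d^{\alpha}\Phi_{p}(\nabla\underline{v})$ are locally in $L^{p'}(\B;d^{\alpha})$ and that $bf(\bar{v}),bf(\underline{v})$ are continuous on $\B$.

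Once $\phi$ is installed as a test function, I subtract the (subsolution $\le 0$) inequality from the (supersolution $\ge 0$) inequality to obtain
\begin{equation*}
\int_{\B}d^{\alpha}\bigl(\Phi_{p}(\nabla\underline{v})-\Phi_{p}(\nabla\bar{v})\bigr)\cdot\nabla\phi\,dx
+\int_{\B}b(x)\bigl(f(\underline{v})-f(\bar{v})\bigr)\phi\,dx\;\le\;0.
\end{equation*}
The support of $\phi$ lies in the set $S:=\{\underline{v}>\bar{v}\}$, where $\nabla\phi=\nabla\underline{v}-\nabla\bar{v}$ a.e. On $S$ the second integrand is nonnegative by monotonicity of $f$ and positivity of $b$, and the first integrand is pointwise nonnegative by the classical strict monotonicity inequality for the $p$-Laplacian vector field,
\begin{equation*}
\bigl(\Phi_{p}(\xi)-\Phi_{p}(\eta)\bigr)\cdot(\xi-\eta)\ge 0,\qquad \xi,\eta\in\mathbb{R}^{N},
\end{equation*}
with equality iff $\xi=\eta$. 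Combined with $d^{\alpha}\ge 0$, both integrals are therefore nonnegative, so each must vanish.

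Vanishing of the first integral forces $\nabla\underline{v}=\nabla\bar{v}$ a.e. on $S$ (the strict monotonicity is the key quantitative ingredient here), hence $\nabla\phi=0$ a.e. on $\B$. Since $\phi\in W^{1,p}_{0}(\B)$, Lemma~\ref{lem:aux:1} (applied in the weighted space, noting that $W^{1,p}_{0}(\B)\hookrightarrow W^{1,p}_{0}(\B;d^{\alpha})$ for the admissible weight $d^{\alpha}$, $-1<\alpha<p-1$) yields $\phi\equiv 0$ a.e., i.e.\ $\underline{v}\le\bar{v}$ a.e.\ in $\B$.

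The main technical obstacle I anticipate is the first step: confirming admissibility of $(\underline{v}-\bar{v})^{+}$ as a test function. In the weighted setting the natural density $W^{1,p}_{c}(\B)\subset W^{1,p}_{0}(\B;d^{\alpha})$ must be invoked carefully because the pairing $\int d^{\alpha}\Phi_{p}(\nabla v)\cdot\nabla\phi$ needs uniform local integrability bounds along the approximating sequence; this is handled by the $A_{p}$-property of $d^{\alpha}$ recorded after Lemma~\ref{lem:aux:1} and the local $W^{1,p}$ regularity of $\bar{v},\underline{v}$. The monotonicity chain afterwards is completely standard.
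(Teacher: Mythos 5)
Your argument is correct and essentially identical to the paper's: the paper likewise tests with the nonnegative function $(\bar{v}-\underline{v})^{-}$, subtracts the two weak inequalities, and uses the monotonicity of $\Phi_p$ (written there as an explicit two-term algebraic decomposition rather than the abstract inequality $(\Phi_p(\xi)-\Phi_p(\eta))\cdot(\xi-\eta)\ge 0$) together with the monotonicity of $f$, positivity of $b$, and Lemma~\ref{lem:aux:1} to conclude $\underline{v}\le\bar{v}$. One small notational slip: by the paper's convention $u^{-}=\max(-u,0)$, so $(\underline{v}-\bar{v})^{+}$ equals $(\bar{v}-\underline{v})^{-}$ rather than its negative; your test function is therefore exactly the paper's $w$ and is nonnegative, as the rest of your argument requires.
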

\begin{proof}[Proof of Lemma~\ref{thm:comp:1}]
Let us choose $w:=(\bar{v}-\underline{v})^{-}\in W^{1,p}_{0}(\B)$, and $w\ge 0$. It follows then
\begin{align*}
0\ge \int_{\B} \left(\Phi_p(\nabla\underline{v})-\Phi_p(\nabla \bar{v})\right)\nabla w\, d^{\alpha}dx +\int_{\B} b(x)f(\underline{v}) w\, dx - \int_{\B} b(x)f(\bar{v}) w\, dx~~~~~~~~~~~~~~~~~~~~~\\
= \int_{\B}\frac{|\nabla\underline{v}|^{p-2}+|\nabla \bar{v}|^{p-2}}{2} |\nabla(\bar{v}-\underline{v})^-|^2\, d^{\alpha}dx~~~~~~~~~~~~~~~~~~~~~~~~~~~~~~~~~~~~~~~~~~~~~~~\\
 +\int_{\{\bar{v}<\underline{v}\}}\dfrac{|\nabla \underline{v}|^{p-2}-|\nabla \bar{v}|^{p-2}}{2} (|\nabla \underline{v}|^2 -|\nabla \bar{v}|^2)\, d^{\alpha}dx + \int_{\{\bar{v}<\underline{v}\}}b(x)(f(\underline{v}) - f(\bar{v})) (\bar{v}-\underline{v})^{-} \, dx\ge 0.
\end{align*}

Indeed, since $b\ge 0$ and $f$ is nondecreasing, every summand in this last expression is nonnegative, and hence we
obtain that $w = 0$ a.e. in $\B$, {\em i.e.}, $\underline{v}\le \bar{v}$ a.e. in $\B$. 
\end{proof}

\section{The existence theorem}\label{sec:exis:blw}
The following result guarantees that the a priori existence of a {\em large solution} in between large sub- and supersolution to the problem~\eqref{eq:1}.
\begin{theorem}\label{thm:exis}
Let $\underline{u},\overline{u}\in W^{1,p}_{\rm loc}(\B)\cap C(\B)$ are the weak sub- and supersolution of~\eqref{eq:1} according to Definition~\ref{def:sub:sol} in $\B$ such that
\begin{align*}
\underline{u}(x)\le \overline{u}(x)\qquad\text{a.e. in}\ \B.
\end{align*}
Then \eqref{eq:1}
and with further conditions 
\begin{align}
\lim_{d(x)\to 0}\underline{u}(x)=\infty\qquad\text{and}\qquad\lim_{d(x)\to 0}\overline{u}(x)=\infty,
\end{align}
 \eqref{eq:1}-\eqref{eq:2} possesses a (weak) solution $u\in W^{1,p}_{\rm loc}(\B)\cap C(\B)$ in between $\underline{u}$ and $\overline{u}$.
\end{theorem}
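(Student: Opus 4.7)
The plan is to exhaust $\B$ by a nested sequence of smooth open sets $\Omega_n\Subset\Omega_{n+1}\Subset\B$ with $\bigcup_n\Omega_n=\B$ (concretely, $\Omega_n=B_{1-1/n}(x_0)$), to solve on each $\Omega_n$ an auxiliary Dirichlet problem whose solution is pinned between $\underline{u}$ and $\overline{u}$, and then to extract a subsequential limit on $\B$. The crucial simplification is that, since $\overline{\Omega_n}\subset\B$, the weight $d^{\alpha}$ is bounded above and below by positive constants on $\Omega_n$; thus on $\Omega_n$ the operator $\Delta_{d^{\alpha},p}$ is a quasilinear operator of $p$-Laplace type with smooth, uniformly positive coefficient, the weighted space $W^{1,p}(\Omega_n;d^{\alpha})$ coincides with the classical $W^{1,p}(\Omega_n)$ (with equivalent norms), and the standard nonlinear variational theory applies.

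Fix $n$. Introduce the truncation $T(x,s):=\max\{\underline{u}(x),\min\{s,\overline{u}(x)\}\}$ and consider
\begin{align*}
\diver\bigl(d^{\alpha}\Phi_p(\nabla u)\bigr)=b(x)\,f(T(x,u))\ \text{in}\ \Omega_n,\qquad u-\overline{u}\in W^{1,p}_0(\Omega_n).
\end{align*}
A solution $u_n$ is produced by minimizing the strictly convex, coercive, weakly lower semicontinuous functional
\begin{align*}
u\longmapsto \frac{1}{p}\int_{\Omega_n}d^{\alpha}|\nabla u|^p\,dx+\int_{\Omega_n}b(x)\left(\int_0^{u(x)}f(T(x,t))\,dt\right)dx
\end{align*}
over $\overline{u}+W^{1,p}_0(\Omega_n)$; the inner integrand is bounded by continuity of $\underline{u},\overline{u},b$ on $\overline{\Omega_n}$. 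To establish the barrier $\underline{u}\le u_n\le\overline{u}$ I would test with $(u_n-\overline{u})^+\in W^{1,p}_0(\Omega_n)$: on $\{u_n>\overline{u}\}$ one has $T(x,u_n)=\overline{u}$, so subtracting from the weak supersolution inequality for $\overline{u}$ and invoking the strict monotonicity of $\Phi_p$ (exactly as in the proof of Lemma~\ref{thm:comp:1}) forces $(u_n-\overline{u})^+=0$; a symmetric test against $(\underline{u}-u_n)^+$ gives $u_n\ge\underline{u}$. Hence $T(x,u_n)=u_n$ and $u_n$ is a genuine weak solution of~\eqref{eq:1} on $\Omega_n$.

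To pass to the limit, fix a compact $K\subset\B$; for $n$ large $K\subset\Omega_{n-1}$. The barrier yields $\|u_n\|_{L^{\infty}(\Omega_{n-1})}\le\|\overline{u}\|_{L^{\infty}(\Omega_{n-1})}<\infty$, whence $b(x)f(u_n)$ is uniformly bounded in $L^{\infty}(\Omega_{n-1})$. Since $d^{\alpha}$ is smooth and uniformly positive on $\overline{\Omega_{n-1}}$, the interior $C^{1,\beta}_{\rm loc}$ regularity theory for quasilinear $p$-Laplace type equations (Tolksdorf, DiBenedetto, Lieberman) delivers a uniform $C^{1,\beta}(K)$ bound for $\{u_n\}$. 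A diagonal Arzel\`a--Ascoli argument across the exhaustion extracts a subsequence converging in $C^1_{\rm loc}(\B)$ to some $u\in W^{1,p}_{\rm loc}(\B)\cap C(\B)$ with $\underline{u}\le u\le\overline{u}$. Any $\phi\in W^{1,p}_c(\B)$ has $\operatorname{supp}\phi\subset\Omega_n$ for $n$ sufficiently large, so passing to the limit in the weak formulation on $\Omega_n$ shows $u$ solves~\eqref{eq:1} on $\B$.

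The additional blow-up conclusion \eqref{eq:2} is then immediate: $u\ge\underline{u}$ on $\B$ together with $\underline{u}(x)\to\infty$ as $d(x)\to 0$ forces $u(x)\to\infty$ as $d(x)\to 0$. The main obstacle I anticipate is the construction of the truncated Dirichlet solutions $u_n$ obeying the two-sided barrier in the weighted setting; once Lemma~\ref{thm:comp:1} is available on $\Omega_n$ (where the weight is non-degenerate) and the direct variational argument sketched above yields $u_n$, the remaining pieces---interior regularity, compactness, passage to the limit in the test-function identity, and boundary blow-up---are standard.
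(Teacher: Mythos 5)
Your proposal is correct and follows essentially the same route as the paper: exhaust the ball by $B_{1-1/n}(x_0)$, solve on each piece a Dirichlet problem whose solution is pinned between $\underline{u}$ and $\overline{u}$, and pass to the limit via the DiBenedetto--Tolksdorf interior $C^{1,\beta}$ estimates together with a diagonal Arzel\`a--Ascoli argument. The only difference is that where the paper invokes Hess's theorem to produce the pinned Dirichlet solution on each $B_n$, you give a self-contained truncation-plus-minimization construction with the barrier obtained by testing against $(u_n-\overline{u})^+$ and $(\underline{u}-u_n)^+$; this is a sound and somewhat more transparent substitute for that citation.
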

\begin{proof}
For each $n\ge 1$, we consider
\begin{align*}
B_n:= B(x_0,1-\frac{1}{n}).
\end{align*}
Clearly, $B_n\subseteq B_{n+1}$ for $n\ge 1$ and we can choose $n\ge n_0$ large enough such that $\partial B_n$ is of class $C^2$ and
\begin{align*}
\partial B_n=\left\{ x\in \B:\ {\rm dist}(x,\partial\B)=\dfrac{1}{n}\right\}\subset\B.
\end{align*}
Note that, $d^{\alpha}(x)={\rm dist}^{\alpha}(x,\partial\B)$ is bounded in $ B_n$ for each $n\ge 1$. Following~\cite[Main Theorem, p-51]{hess74}, considering (H1), (H2) there, $A_{1}(0,0,\nabla u)=d^{\alpha}(x)\Phi_{p}(\nabla u)$, $p(x,u(x),{\bf 0})= - b(x)f(u)$, in particular, the problem
\begin{align}\label{eq:10}
\begin{cases}
-{\rm div}(d^{\alpha}\Phi_{p}(\nabla u)) + b(x)f(u) = 0\qquad &\text{in}\  B_n,\\
u=(\underline{u}+ \overline{u})/2\quad \text{on}\ \partial B_n,
\end{cases}
\end{align}
possesses a solution $u_n\in W^{1,p}( B_n)$ such that $u_n - (\underline{u}+ \overline{u})/2\in W^{1,p}_{0}( B_n)$ and
\begin{align}\label{eq:11}
\underline{u}_{|_{ B_n}}\le u_n\le \overline{u}_{|_{ B_n}}\qquad\ \text{in}\  B_n.
\end{align}
Taking advantage of~\eqref{eq:11} and the $C^{1,\beta} (0<\beta<1)$ interior regularity theory by DiBenedetto~\cite{dibenedetto83}-Tolksdorf~\cite{tolksdorff84} together with Ascoli-Arzel\`a theorem gives rise to an existence of subsequence of $\{u_n\}_{n\ge 1}$, say $\{u_{n_m}\}_{m\ge 1}$, for which
\begin{align*}
\lim_{m\to \infty}\|u_{n_m}-u_0\|_{C^{1,\gamma}(\overline{B}_{n_0})}=0,\quad 0\le\gamma<\beta<1,
\end{align*}
for some solution $u_0\in C^{1,\beta}( B_{n_0})$ of~\eqref{eq:10}. Now consider the new sequence $\{{u_{n_m}}{|_{ B_{n_1}}}\}_{m\ge 1}$. The previous argument also shows the existence of a subsequence of $\{{u_{n_m}}{|_{ B_{n_1}}}\}_{m\ge 1}$, relabeled by $n_m$, such that, for some $u_1\in C(\overline{B}_{n_1})$,
\begin{align*}
\lim_{m\to \infty}\|u_{n_m}-u_1\|_{C^{1,\gamma}(\overline{\B}_{n_1})}=0\quad 0\le\gamma<\beta<1.
\end{align*}
Note that, the continuity of the solution $u_1$ in $\overline{B}_{n_1}$ gives us $u_{1}|_{ B_{n_0}}=u_0$. Repeating this argument infinitely, the pointwise limit of the diagonal sequence $\{ u_{m_m}\}$ gives us the required solution in between $\underline{u}$ and $\overline{u}$.
\end{proof}
Next we formulate the existence theorem which follows the technique originally given by Keller~\cite{Keller57} applied for the case $
p=2$. To proceed with that, we need the following regularity result for weak solutions to quasilinear equations due to DiBenedetto~\cite{dibenedetto83} and Tolksdorff~\cite{tolksdorff84}.
\begin{theorem}[$C^{1,\beta}$ interior regularity~{\cite[DiBenedetto]{dibenedetto83}}]\label{thm:reg:loc}
Let $\Omega\subset\mathbb{R}^{N}$ be bounded domain and $p>1$. Assuming~\ref{itm:f1}-\ref{itm:b1}, suppose that $u\in W^{1,p}_{\rm loc}(\Omega)\cap L^{\infty}_{\rm loc}(\Omega)$ be a weak solution of~\eqref{eq:1}. Then for a given compact subset $K\Subset \Omega$, there is $\beta\in(0,1)$ and a possitive constant $C$, depending on $N,p,\|b\|_{\infty}, \|u\|_{\infty}$ and $K$ such that
\begin{align}\label{eq:reg:est}
|\nabla u(x)|\le C\quad\text{and}\quad|\nabla u(x)-\nabla u(y)|\le C|x-y|^{\beta},\ x,y\in K.
\end{align}
\end{theorem}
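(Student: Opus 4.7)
The plan is to treat Theorem~\ref{thm:reg:loc} as an application of the classical DiBenedetto--Tolksdorff interior regularity on any compact $K\Subset\Omega$, using the fact that the boundary weight $d^\alpha$ is bounded above and away from zero in the interior. The proof is therefore really a verification of the structural hypotheses of~\cite{dibenedetto83, tolksdorff84} on a slightly larger compact neighborhood.

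First, I would fix a compact $K\Subset\Omega$ and pick an intermediate open set $K\Subset U\Subset\Omega$. On $\overline{U}$, set
\[
d_{-}:=\inf_{x\in\overline{U}}d(x,\partial\Omega)>0,\qquad d_{+}:=\sup_{x\in\overline{U}}d(x,\partial\Omega)\le\operatorname{diam}(\Omega),
\]
so that, regardless of the sign of $\alpha$, one has $0<c_U\le d^{\alpha}(x)\le C_U<\infty$ on $\overline{U}$, with constants $c_U,C_U$ depending only on $\alpha$ and $d_{\pm}$. Consequently, the vector field $A(x,\xi):=d^{\alpha}(x)\Phi_p(\xi)$ satisfies, on $\overline{U}$, the standard $p$-structural conditions required by DiBenedetto--Tolksdorff:
\[
|A(x,\xi)|\le C_U|\xi|^{p-1},\qquad A(x,\xi)\cdot\xi\ge c_U|\xi|^p,
\]
together with strict monotonicity of the $p$-Laplacian type, with constants depending on $U$.

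Second, I would control the right-hand side. By the local boundedness hypothesis $u\in L^{\infty}_{\mathrm{loc}}(\Omega)$, the continuity of $f$ from~\ref{itm:f1}, and the continuity of $b$ from~\ref{itm:b1}, the source satisfies
\[
\|b(\cdot)f(u)\|_{L^{\infty}(U)}\le \|b\|_{L^{\infty}(U)}\,f\bigl(\|u\|_{L^{\infty}(U)}\bigr)<\infty,
\]
so $u$ is a bounded weak solution of a (non-degenerate, in $U$) quasilinear equation in divergence form of $p$-Laplacian type with a bounded right-hand side.

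Third, I would invoke the DiBenedetto--Tolksdorff interior $C^{1,\beta}$ estimate directly on $U$. Their theorem yields $\beta\in(0,1)$ and a constant $C$, depending only on $N,p,c_U,C_U,\|b\|_{L^{\infty}(U)},\|u\|_{L^{\infty}(U)}$ and $\operatorname{dist}(K,\partial U)$, for which~\eqref{eq:reg:est} holds on $K$. Since $U$ was chosen to compactly contain $K$ inside $\Omega$, the constant ultimately depends on $K$ only as asserted. The only subtle point — not really an obstacle — is that the ellipticity constants $c_U,C_U$ degenerate as $U$ approaches $\partial\Omega$; this is the reason the estimate is purely interior and cannot be upgraded to a global $C^{1,\beta}(\overline{\Omega})$ bound without further work, but is harmless for the local use made in later sections.
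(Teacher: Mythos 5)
Your argument is correct and matches the paper's treatment: Theorem~\ref{thm:reg:loc} is stated there as a direct citation of DiBenedetto--Tolksdorff with no proof supplied, and your localization (choosing $K\Subset U\Subset\Omega$ so that $0<c_U\le d^{\alpha}\le C_U$ on $\overline{U}$ and $\|b f(u)\|_{L^{\infty}(U)}<\infty$ via the monotonicity of $f$) is exactly the standard verification that their interior theory applies. The only hypothesis you leave implicit is the continuity (Lipschitz/H\"older in $x$) of the coefficient $x\mapsto d^{\alpha}(x)$ on $\overline{U}$, which the structure conditions of~\cite{dibenedetto83} also require and which holds since $d(\cdot,\partial\Omega)$ is Lipschitz; this is harmless and does not affect the conclusion.
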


\begin{theorem}[Existence of a BVP]\label{thm:1eaux}
Let $\Omega\subset\mathbb{R}^{N}$ be a bounded domain and $-1<\alpha<p-1$. Suppose that $f\in RV_{\sigma+1},\ \sigma >p-2$. Then for every given function $\eta\in W^{1,p}(\Omega;d^{\alpha})$ 
such that $F\circ \eta\in L^{1}(\Omega)$, the problem
\begin{align}\label{eq:1eaux}
\begin{cases}
{\rm div}(d^{\alpha}\Phi_p(\nabla u)) = f(u),\ x\in\Omega,\\
u-\eta\in W^{1,p}_{0}(\Omega;d^{\alpha}).
\end{cases}
\end{align}
admits a weak solution $u\in W^{1,p}(\Omega;d^{\alpha})$.
\end{theorem}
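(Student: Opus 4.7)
The plan is to prove Theorem~\ref{thm:1eaux} by the direct method of the calculus of variations. Introduce the functional
\[
J(v)=\frac{1}{p}\int_{\Omega}|\nabla v|^{p}d^{\alpha}(x)\,dx+\int_{\Omega}F(v)\,dx,
\]
on the (weakly closed) affine subspace $X_{\eta}:=\eta+W^{1,p}_{0}(\Omega;d^{\alpha})$ of $W^{1,p}(\Omega;d^{\alpha})$. The base point is admissible ($J(\eta)<\infty$) thanks to the hypotheses $\eta\in W^{1,p}(\Omega;d^{\alpha})$ and $F\circ\eta\in L^{1}(\Omega)$. The functional is convex, since $F$ is convex ($f$ is nondecreasing, so $F''=f'\ge 0$) and the weighted $p$-Dirichlet energy is convex in $\nabla v$. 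Formal differentiation yields the Euler-Lagrange identity $\int_{\Omega}d^{\alpha}\Phi_{p}(\nabla u)\nabla\phi\,dx+\int_{\Omega}f(u)\phi\,dx=0$, which, in the sense of Definition~\ref{def:sub:sol}, is exactly the weak form of~\eqref{eq:1eaux}.

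The two ingredients needed to run the direct method are coercivity and weak lower semicontinuity. For coercivity, combine the elementary estimate $|a+b|^{p}\ge 2^{1-p}|a|^{p}-|b|^{p}$ with the weighted Poincaré inequality for $v-\eta\in W^{1,p}_{0}(\Omega;d^{\alpha})$, available because $d^{\alpha}$ lies in the Muckenhoupt class $A_{p}$ whenever $-1<\alpha<p-1$ (see \cite[Ch.~15]{heino12}), to deduce $J(v)\ge c_{1}\|v\|_{W^{1,p}(\Omega;d^{\alpha})}^{p}-c_{2}$ on $X_{\eta}$; nonnegativity $F\ge 0$ means the reaction term costs nothing. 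For weak lower semicontinuity, the Dirichlet part is convex and strongly continuous, hence weakly lsc; for the nonlinear part, given $v_{n}\rightharpoonup v$ in $W^{1,p}(\Omega;d^{\alpha})$, extract via the compact embedding $W^{1,p}(\Omega;d^{\alpha})\hookrightarrow L^{1}(\Omega)$ (a consequence of the $p$-admissibility of $d^{\alpha}$) a subsequence with $v_{n}\to v$ a.e., and apply Fatou's lemma to $F(v_{n})\ge 0$. A minimizing sequence is thus bounded in $W^{1,p}(\Omega;d^{\alpha})$; extract a weakly convergent subsequence, whose limit $u\in X_{\eta}$ satisfies $J(u)=\inf_{X_{\eta}}J$.

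The main obstacle is the rigorous derivation of the Euler-Lagrange equation: since $f\in RV_{\sigma+1}$ may grow fast, the integrability $f(u)\phi\in L^{1}(\Omega)$ for an arbitrary test $\phi\in W^{1,p}_{0}(\Omega;d^{\alpha})$ is not automatic. I would handle this via truncation. Set $f_{k}(s):=\min\{f(s),k\}$ with primitive $F_{k}\le F$; the associated functional $J_{k}$ satisfies the same coercivity and lower-semicontinuity estimates (uniformly in $k$), and its minimizers $u_{k}$ solve the weak equation $\int d^{\alpha}\Phi_{p}(\nabla u_{k})\nabla\phi\,dx+\int f_{k}(u_{k})\phi\,dx=0$ without any integrability issue, because the Gateaux derivative of $J_{k}$ is unambiguous when $f_{k}$ is bounded. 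The uniform bound $J_{k}(u_{k})\le J_{k}(\eta)\le J(\eta)$ yields a weak limit $u_{k}\rightharpoonup u$ in $W^{1,p}(\Omega;d^{\alpha})$, a.e.\ along a subsequence; a Minty/Browder monotonicity argument passes to the limit in the nonlinear gradient term $d^{\alpha}\Phi_{p}(\nabla u_{k})$, while monotone convergence gives $f_{k}(u_{k})\to f(u)$ in $L^{1}_{\mathrm{loc}}$, producing the desired weak solution of~\eqref{eq:1eaux}.
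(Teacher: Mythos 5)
Your proposal is correct and follows essentially the same route as the paper: the paper's proof also minimizes the Euler functional $J(v)=\int_{\Omega}\bigl[\tfrac1p|\nabla v|^{p}d^{\alpha}+F(v)\bigr]\,dx$ over the affine constraint set $\{v:\,v-\eta\in W^{1,p}_{0}(\Omega;d^{\alpha}),\ F\circ v\in L^{1}(\Omega)\}$, invoking the weighted Poincar\'e inequality for the $A_p$ weight $d^{\alpha}$ and the regular-variation lower bound $F(u)\gtrsim u^{\sigma+2}L(u)$, deferring the remaining details to a cited reference. Your truncation argument for the Euler--Lagrange step is a reasonable way to supply details the paper leaves implicit.
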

\begin{proof}
A weak (variational) solution of the problem~\eqref{eq:1eaux}, we understand a function $u\in W^{1,p}(\Omega;d^{\alpha})$ which minimizes the Euler functional
\begin{align*}
J(u):=\int_{\Omega}\left[\frac{1}{p}|\nabla u(x)|^{p}d^{\alpha}(x)+F(u(x))\right]\, dx
\end{align*}
on a set $\mathcal{K}:=\{v\in W^{1,p}(\Omega;d^{\alpha}):\   v-\eta\in W^{1,p}_{0}(\Omega;d^{\alpha})\ \text{and}\ F\circ v\in L^{1}(\Omega)\}$. Note that by Theorem~\ref{prop:assymp:reg:func}(\cite[Theorem 2.1, p-53]{sene}), we have $F(u)\ge (\sigma+2)^{-1}u^{\sigma+2}L(u)$ for $\sigma>-2$ for all $u\in \mathbb{R}$. The weight function $d^{\alpha}$ for $-1<\alpha<p-1$ satisfies the weighted Poincar\'e inequality. Hence the proof can follow the same lines of~\cite[Theorem 3.4]{rnd5}.
\end{proof}
One can also find a proof of Theorem~\ref{thm:1eaux} in~\cite[Example 1(ii),p-219]{Drabek1998}.

We say the following condition (weighted Keller-Osserman) be satisfied by $f(u)$:
\begin{description}
\item[\namedlabel{itm:f2}{(\text{$\tilde{F}$})}]~ if it is a single-valued real continuous function defined for all real values of $u$ and if there exist a positive nondecreasing continuous function $h(u)$, a radial weight functions $w>0$ and invertible maps $s,t:\mathbb{R}\rightarrow (0,\infty]$ such that $f(u)> h(u)$ and
\begin{align}\label{eq:ko:cond}
\lim\limits_{t\to\infty}\int_{0}^{t}w(t^{-1}(s))^{\frac{1}{p-1}}\left[\frac{p}{p-1}\int_{0}^{s}w(s^{-1}(z))^{\frac{1}{p-1}} h(z)\, dz\right]^{-1/p}\, ds<\infty.
\end{align}
\end{description}
\begin{theorem}\label{thm:aux:blw:exis}
Let $f$ satisfy the condition~\ref{itm:f2}, $-1<\alpha<p-1$ and let $u$ be a $W^{1,p}_{loc}(B_{R}(x_0))$-solution of ${\rm div}(w(|x-x_0|)\Phi_p(\nabla u(x))) = f(u(x))$
 in a ball $B_{R}(x_0)$ as per Definition~\ref{def:sub:sol}. Then there exists a decreasing function $\phi$ determined by $\tilde{h}$ in~\ref{itm:f2} such that
\begin{align}\label{eq:aux:5}
u(x)\le \phi(d(x,\partial B_{R}(x_0))).
\end{align}
Furthermore, the function $\phi$ has the limits
\begin{align}
\phi(r)\to \infty,\quad r\to 0,\label{eq:aux:6}\\
\phi(r)\to -\infty,\quad r\to \infty.\label{eq:aux:7}
\end{align}
\end{theorem}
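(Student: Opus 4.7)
The strategy is the classical Keller--Osserman approach adapted to the weighted $p$-Laplacian: construct an explicit radial, decreasing function $\phi$ such that $V(x):=\phi(d(x,\partial B_R(x_0)))$ is a supersolution of the equation that blows up on $\partial B_R(x_0)$, and then use a comparison argument (Lemma~\ref{thm:comp:1}) on nested subdomains to conclude $u\le V$. Hypothesis~\ref{itm:f2} enters twice: the pointwise bound $f>h$ allows one to replace $f$ by the simpler nonlinearity $h$ in the construction, and the KO integral~\eqref{eq:ko:cond} is precisely the finiteness condition that guarantees the candidate $\phi$ blows up at the endpoint.

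To construct $\phi$, I would first write the equation in radial form: for $V(x)=\psi(|x-x_0|)$ with $r=|x-x_0|$, the left-hand side becomes $r^{1-N}\bigl(r^{N-1}w(r)\Phi_p(\psi'(r))\bigr)'$. Seeking $\psi$ monotone with $\psi\to\infty$ as $r\to R^-$, one drops the nonnegative dimensional correction $(N-1)/r$ (which has the correct sign on a monotone radial candidate) and reduces to the ODE inequality $\bigl(w(r)\Phi_p(\psi'(r))\bigr)'\ge h(\psi(r))$. Multiplying by $\psi'$ and integrating once yields a first-order separable inequality for $\psi'$; the invertible maps $s,t:\mathbb{R}\to(0,\infty]$ from~\ref{itm:f2} are precisely the substitutions that strip the factor $w$ from the $p$-th power and lead, after a second integration, to $\phi$ defined implicitly by the nested KO integral in~\eqref{eq:ko:cond}. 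Convergence of the outer integral in~\eqref{eq:ko:cond} gives that $\phi(r)\to\infty$ as $r\to 0^+$, establishing~\eqref{eq:aux:6}; strict monotonicity of $\phi$ together with the unboundedness of the parameter range of the KO integral at the opposite endpoint yields~\eqref{eq:aux:7}. A direct differentiation then verifies that $V(x)=\phi(R-|x-x_0|)$ is a classical, hence weak, supersolution of ${\rm div}(w\Phi_p(\nabla V))=h(V)\le f(V)$ in $B_R(x_0)\setminus\{x_0\}$.

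The final step is comparison. For $\rho\in(0,R)$ set $A_\rho:=B_R(x_0)\setminus\overline{B_\rho(x_0)}$. By interior $C^{1,\beta}$-regularity (Theorem~\ref{thm:reg:loc}), $u$ is bounded on $\partial B_\rho(x_0)$, while $V$ is continuous on the inner sphere, so a constant $C_\rho$ can be chosen with $V+C_\rho\ge u$ on $\partial B_\rho(x_0)$. On the outer side, $V(x)\to\infty$ as $d(x,\partial B_R)\to 0$, so $(u-V-C_\rho)^+$ has support compactly contained in $A_\rho$ and hence lies in $W^{1,p}_0(A_\rho)$. Applying Lemma~\ref{thm:comp:1} with the supersolution $V+C_\rho$ and subsolution $u$ gives $u\le V+C_\rho$ on $A_\rho$. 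A diagonal argument, sending $\rho\to R^-$ while extracting $C_\rho\to 0$ (which is possible because the boundary values of $u$ on the inner spheres remain controlled and $V$ is continuous away from $\partial B_R$), delivers $u(x)\le \phi(d(x,\partial B_R(x_0)))$ in all of $B_R(x_0)$.

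The step I expect to be the main technical obstacle is the rigorous reduction of the weighted radial inequality to the explicit integral form of $\phi$, because both factors $r^{N-1}$ and the weight $w(r)$ must be absorbed simultaneously by the substitutions $s,t$ without losing the supersolution inequality; in particular, showing that the term $(N-1)/r$ can be discarded with the correct sign requires a careful monotonicity analysis of the candidate $\phi$ near the endpoint. A secondary difficulty is handling the behavior of $V$ at the centre $x_0$ where $w$ may be singular: one verifies that $V$ remains smooth enough there for Lemma~\ref{thm:comp:1} to apply on each $A_\rho$, using the admissibility of $w$ on the annuli to avoid the potentially bad point.
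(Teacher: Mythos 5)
There is a genuine gap, and it is a sign error at the heart of your barrier construction. In the convention of Definition~\ref{def:sub:sol}, a supersolution (an upper barrier for comparison via Lemma~\ref{thm:comp:1}) must satisfy ${\rm div}(w\Phi_p(\nabla V))\le h(V)$, while a subsolution satisfies the reverse inequality; since $u$ solves ${\rm div}(w\Phi_p(\nabla u))=f(u)\ge h(u)$, it is a \emph{subsolution} of the $h$-equation, and you need $V$ to be a \emph{supersolution} of that equation to conclude $u\le V$. Your radial candidate $\psi(r)$ is nondecreasing in $r=|x-x_0|$, so the dimensional term $\tfrac{N-1}{r}w(r)\Phi_p(\psi'(r))$ is nonnegative, and the reduction you propose, $\bigl(w\Phi_p(\psi')\bigr)'\ge h(\psi)$, yields ${\rm div}(w\Phi_p(\nabla V))=\bigl(w\Phi_p(\psi')\bigr)'+\tfrac{N-1}{r}w\Phi_p(\psi')\ge h(V)$ --- i.e.\ $V$ is a subsolution, which lies \emph{below} solutions and cannot serve as an upper bound. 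Dropping a nonnegative term from the left-hand side of an inequality you want to be ``$\le h$'' weakens rather than establishes it. This is precisely why the paper does not build an explicit barrier: following Keller, it compares $u$ with the \emph{exact} radial solutions $v_{k,\tilde R}$ of the Dirichlet problems \eqref{eq:aux:1}--\eqref{eq:aux:2} with constant data $k\ge u$ on concentric spheres (so no term is discarded and comparison is immediate), and then uses the two-sided estimates \eqref{eq:rad:13}--\eqref{eq:rad:15}, where the factor $N^{1/p}$ absorbs the dimensional term, only to show that the double limit $\phi(s)=\lim_{\tilde R\to R}\lim_{k\to\infty}v_{k,\tilde R}$ is finite, decreasing, and has the limits \eqref{eq:aux:6}--\eqref{eq:aux:7} via the blow-up-radius function $\tilde R(\psi_0)$ and its inverse.

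A secondary problem is your final limiting step. On the annulus $A_\rho=B_R(x_0)\setminus\overline{B_\rho(x_0)}$ you obtain $u\le V+C_\rho$ with $C_\rho$ chosen to dominate $u$ on the inner sphere, and then propose to send $\rho\to R^-$ with $C_\rho\to 0$. As $\rho\to R^-$ the annulus shrinks to the boundary sphere, so the conclusion holds on a vanishing set; to cover $B_R(x_0)$ you would need $\rho\to 0^+$, but then there is no mechanism forcing $C_\rho\to 0$, and you would only obtain $u\le\phi(d)+C$ for some fixed constant, which is weaker than \eqref{eq:aux:5}. The paper's exhaustion by concentric balls $B_{\tilde R}(x_0)$ with $\tilde R\to R$, where the comparison needs no additive constant because $v_{k,\tilde R}=k\ge u$ on $\partial B_{\tilde R}(x_0)$ by the choice of $k$, avoids both difficulties.
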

As a consequence of Theorem~\ref{thm:aux:blw:exis}, we obtain the following Theorems.
\begin{theorem}
If $f(u)$ satisfies~\ref{itm:f2}, then 
\begin{align}\label{eq:entire}
{\rm div}(w(|x|)\Phi_p(\nabla u(x))) = b(x)f(u(x))
\end{align}
 has no entire solution.
\end{theorem}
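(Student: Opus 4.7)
The plan is a short contradiction argument leveraging Theorem~\ref{thm:aux:blw:exis}. Suppose, for contradiction, that $u$ is an entire weak solution of~\eqref{eq:entire} in $\mathbb{R}^N$. Then for every $R>0$ the restriction $u|_{B_R(0)}$ is a $W^{1,p}_{\mathrm{loc}}(B_R(0))$-solution of the same equation on the ball $B_R(0)$, so Theorem~\ref{thm:aux:blw:exis} (applied with $x_0=0$) furnishes a decreasing barrier $\phi$, determined by the Keller--Osserman lower bound in~\ref{itm:f2}, such that
\begin{equation*}
u(0) \le \phi\bigl(d(0,\partial B_R(0))\bigr) = \phi(R), \qquad R>0.
\end{equation*}

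By~\eqref{eq:aux:7}, $\phi(R)\to -\infty$ as $R\to\infty$; letting $R\to\infty$ in the above inequality forces $u(0)=-\infty$, which is the desired contradiction. Since $0\in\mathbb{R}^N$ was arbitrary up to translation, no entire solution can exist.

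The only nontrivial point is that Theorem~\ref{thm:aux:blw:exis} was formulated for the model equation without the coefficient $b(x)$. To adapt it to~\eqref{eq:entire}, I would use the positivity of $b$: on each fixed ball $B_R(0)$ there exists $b_{\min}>0$ such that $b(x)f(u)\ge b_{\min}h(u)$, where $h$ is the dominating function from~\ref{itm:f2}. Since scaling $h$ by the positive constant $b_{\min}$ preserves the Keller--Osserman integral~\eqref{eq:ko:cond}, the same radial barrier construction underlying Theorem~\ref{thm:aux:blw:exis} produces a suitable $\phi$ with $\phi(r)\to-\infty$ as $r\to\infty$ and the pointwise estimate~\eqref{eq:aux:5} remains in force. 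I expect the main obstacle to be exactly this transfer of the barrier from $f$ to $b(x)f$; once that is in place, the divergence $\phi(R)\to-\infty$ delivers the contradiction essentially for free.
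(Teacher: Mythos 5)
Your proof is correct and follows essentially the same route as the paper: apply the a priori barrier of Theorem~\ref{thm:aux:blw:exis} on balls of arbitrarily large radius centered at an arbitrary point and let the radius tend to infinity, so that \eqref{eq:aux:7} forces the value at that point below every real number. Your extra care in transferring the barrier from $f$ to $b(x)f$ actually goes beyond the paper, which silently drops the coefficient $b$; just be aware that if $b_{\min}$ depends on $R$ and degenerates as $R\to\infty$, the barrier $\phi$ becomes $R$-dependent and the limit $\phi(R)\to-\infty$ would need an extra line of justification.
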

\begin{proof}
On the contrary let us suppose that an entire solution for~\eqref{eq:entire} exists. Then $u$ is defined in a sphere of any radius $r$ centered at a point $y$ and by Theorem~\ref{thm:aux:blw:exis}, $u(y)\le \phi(r), \forall\ r$. This essentially implies that $u(y)\le q$ for any real number $q$ which is an absurd. Hence, there cannot have any entire solution $u$ of~\eqref{eq:entire}.
\end{proof}
\begin{theorem}[Existence theorem with unweighted absorption term]\label{thm:exis:1a}
Let $B_{R}(x_0)=:D\subset\mathbb{R}^{N}$, $0<R<\infty$. Assume $-1<\alpha<p-1$ and that $f\in RV_{\sigma+1},\ \sigma >p-2$ satisfies weighted Keller-Osserman condition in~\ref{itm:f2}. Then the problem
\begin{align}
{\rm div}(d(x,\partial D)^{\alpha}\Phi_p(\nabla u(x))) = f(u(x)),\ x\in D,\label{eq:1aux:1a}\\
u(x)\to\infty,\ \ x\to \partial D\label{eq:1aux:1b}
\end{align}
admits a nonnegative solution $u\in W^{1,p}_{\rm loc}(D)\cap C^{1,\beta}(D),\ \beta\in(0,1).$
\end{theorem}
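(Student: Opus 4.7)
The plan is to sandwich the desired large solution between an explicit blow-up subsolution and a blow-up supersolution, and then invoke the sub-super existence result Theorem~\ref{thm:exis} together with the interior regularity of Theorem~\ref{thm:reg:loc}.

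First I would construct a radial supersolution $\overline{u}$ blowing up at $\partial D$. Using the Keller-Osserman hypothesis~\ref{itm:f2}, I look for $\overline{u}(x) = \phi(R - |x - x_0|)$, where $\phi$ is the decreasing function produced in Theorem~\ref{thm:aux:blw:exis}: the same ODE analysis that delivers the a priori bound $u \le \phi(d(x,\partial D))$ shows, with the inequality reversed, that $\phi(R - |x - x_0|)$ is itself a weak supersolution on $D$ blowing up at $\partial D$ in view of~\eqref{eq:aux:6}.

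Second I would build a matching radial subsolution $\underline{u}$ that also blows up at $\partial D$. Using $f \in RV_{\sigma+1}$ with $\sigma > p-2$ and the Karamata tools recalled in Section~\ref{sec:reg:func}, I adapt the one-dimensional Keller-Osserman ODE calculation of Section~\ref{sec:1d} (there carried out for the pure-power case $f(u) = u^q$) to produce a profile $\psi(s) \to \infty$ as $s \to 0^+$ so that $\psi(R - |x - x_0|)$ is a weak subsolution on a thin annular collar $A_\delta := \{0 < d(x,\partial D) < \delta\}$. I then extend to a global nonnegative subsolution on $D$ by setting it equal to a sufficiently small nonnegative constant on $D \setminus A_\delta$, choosing the constant so that the resulting function is lower semicontinuous across the interface and satisfies $\underline{u} \le \overline{u}$ on $D$.

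Third, with $\underline{u} \le \overline{u}$ both explosive at $\partial D$ and with $\underline{u} \ge 0$, Theorem~\ref{thm:exis} yields a nonnegative weak solution $u \in W^{1,p}_{\rm loc}(D) \cap C(D)$ satisfying $\underline{u} \le u \le \overline{u}$ in $D$; the lower bound forces the boundary blow-up~\eqref{eq:1aux:1b}. On any compact $K \Subset D$ the upper bound $u \le \overline{u}$ is finite and the weight $d^\alpha$ is uniformly elliptic, so Theorem~\ref{thm:reg:loc} upgrades $u$ to $C^{1,\beta}(K)$ and hence to $C^{1,\beta}_{\rm loc}(D)$. The main obstacle is Step 2: producing a genuine weak subsolution that blows up at $\partial D$. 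The radial ODE computation itself is standard under the regular-variation hypothesis, but the gluing of $\psi(R - |x-x_0|)$ with a constant across $\partial A_\delta$ must be done so that the one-sided jump in the radial derivative has the correct sign, ensuring the weak inequality in Definition~\ref{def:sub:sol} is preserved against arbitrary nonnegative test functions in $W^{1,p}_c(D)$. A secondary technical point is verifying that $\overline{u}$ constructed from Theorem~\ref{thm:aux:blw:exis} is indeed a supersolution globally on $D$ and not merely on the range of its defining ODE; this follows from the structure of the construction, together with the radial monotonicity of $\phi$ and the sign of the weight $d^\alpha$.
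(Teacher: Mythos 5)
Your overall strategy (explosive subsolution plus explosive supersolution, then Theorem~\ref{thm:exis}) is a genuinely different route from the paper's. The paper instead runs the classical Keller iteration: it solves the Dirichlet problems~\eqref{eq:1aux:1aa}--\eqref{eq:1aux:1bb} with constant boundary data $u=k$ via Theorem~\ref{thm:1eaux}, uses the comparison Lemma~\ref{thm:comp:1} to see that $\{u_k\}$ is increasing, uses the Keller--Osserman barrier of Theorem~\ref{thm:aux:blw:exis} to get the uniform local bound $u_k(z)\le\phi(d(z,\partial D))$, and passes to the monotone limit; the boundary blow-up then comes for free from $u\ge u_k$ together with $u_k=k$ on $\partial D$, so \emph{no explosive subsolution is ever needed}. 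This is the main advantage of the paper's route: the only barrier required is the upper one, which is exactly what the hypothesis~\ref{itm:f2} delivers.

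This matters because Step 2 of your proposal is a genuine gap, and you have correctly identified it as the crux without closing it. Two concrete problems. First, the weighted Keller--Osserman condition~\ref{itm:f2} produces an \emph{upper} barrier (a supersolution-type bound from above); it does not by itself yield a radial profile $\psi$ with $\psi(s)\to\infty$ as $s\to0^+$ that is a weak \emph{subsolution} of ${\rm div}(d^\alpha\Phi_p(\nabla u))=f(u)$ near $\partial D$. Such a lower explosive barrier needs the quantitative structure of $f$ (here $f\in RV_{\sigma+1}$, $\sigma>p-2$) and an ODE computation in the spirit of Section~\ref{sec:1d} or of the sub/supersolution pair $w^{\pm}$ built in the proof of Theorem~\ref{thm:main:gen}; you only assert that this "is standard" and never produce the profile or verify the differential inequality against the weight $(R-|x-x_0|)^{\alpha}$. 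Second, the gluing as described fails: a positive constant $c_0$ is \emph{not} a subsolution of ${\rm div}(d^\alpha\Phi_p(\nabla u))=f(u)$, since that would require $0\ge f(c_0)$, which by~\eqref{assum:f1} forces $c_0=0$. You must either extend by the zero function and take the maximum of the two subsolutions (the maximum of subsolutions being again a subsolution for this operator), or check the sign of the conormal jump across the interface explicitly; "a sufficiently small nonnegative constant" chosen for continuity does not do the job. Until the explosive subsolution is actually constructed and correctly extended, the hypotheses of Theorem~\ref{thm:exis} are not verified and the argument does not close; the paper's monotone-sequence argument is the cleaner way to avoid this entirely.
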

\begin{proof}[Proof of Theorem~\ref{thm:exis:1a}]
The proof can be adapted by the same technique from~{\cite[Theorem III]{Keller57}}.
Note that $d(x,\partial D)^{\alpha}=(R-|x-x_0|)^{\alpha}$ is radial with the respect to the center $x_0$ of the ball $D$.

 We consider for each $k=1,2,\cdots$, $u_k\in W^{1,p}(D;d^{\alpha})$ be the weak solutions of the following boundary value problem (see e.g. Theorem~\ref{thm:1eaux}, \cite[Theorem 4.3, p-250]{diaz1985} for existence),
\begin{align}
{\rm div}(d(x,\partial D)^{\alpha}\Phi_p(\nabla u(x))) = f(u(x)),\ x\in D,\label{eq:1aux:1aa}\\
u(x)=k,\quad	x\in \partial D.\label{eq:1aux:1bb}
\end{align}
Then by comparison principle in Lemma~\ref{thm:comp:1} and Theorem~\ref{thm:reg:loc}, the sequence $\{u_k\}_{k=1}^{\infty}$ is increasing in $k$ and $u_k\in C^{1,\beta}(K)$ for any compact subset $K\subset D$, for some $0<\beta<1$.
Since $f$ satisfies the weighted Keller-Osserman condition, then Theorem~\ref{thm:aux:blw:exis} holds, and for each point $z\in D$,
\begin{align*}
u_k(z)\le \phi(d(z,\partial D)), \quad\text{for all}\ k,
\end{align*}
where $\phi$ is the function appearing in Theorem~\ref{thm:aux:blw:exis}.
Thereby, in every compactly embedded subset $K\Subset D$, as $k\to\infty$ the $u_k$ converges uniformly to a limit, say $u\in W^{1,p}_{\rm loc}(D)$, and it solves weakly ${\rm div}(d(x,\partial D)^{\alpha}\Phi_p(\nabla u(x))) = f(u(x)),\ x\in D$. It is evident by construction of $u_k$'s that $u$ also satisfies $u(x)\to\infty$ as $x\to\partial D$. The said regularity of $u$ inherits from the regularity of $u_k$'s.
\end{proof}
We now show the existence of such $\phi$ in Theorem~\ref{thm:aux:blw:exis}.
\begin{proof}[Proof of Theorem~\ref{thm:aux:blw:exis}]
We shall adapt the proof by the same method as in Keller-Osserman style~\cite[Theorem I]{Keller57}. We shall use the weak comparison principle instead of maximum principle there.
Note that since $u\in W^{1,p}_{\rm loc}(B_{R}(x_0))$, we can always find a concentric ball $B:=B_{\tilde{R}}(x_0),\ 0<\tilde{R}<R$ such that $u$ admits a Sobolev trace on $\partial B$.
Given $u$ is a solution of~\eqref{eq:1} in $B$ and $u$ has a trace on $\partial B$. We define a function $v_{k,\tilde{R}}$ in $B$ and on $\partial B$ as the solution\footnote{Existence of radial solutions~\cite[Section 7]{pucci2006e} and its uniqueness~\cite[Section 8]{pucci2006u}} of
\begin{align}
{\rm div}(w(|x-x_0|)\Phi_{p}(\nabla v(x)))&=h(v(x)),\quad x\in B,\label{eq:aux:1}\\
v&=k,\quad x\in\partial B,\label{eq:aux:2}
\end{align}
where $h$ is the function in the assumption~\ref{itm:f2} and recall that $f(u)\ge h(u)$. We assume that the constant in~\eqref{eq:aux:2} satisfies
\begin{align}
\mathbb{N}\ni k\ge u(x),\quad x\in\partial B.
\end{align}
The existence of the nonhomogeneous boundary value problems~\eqref{eq:aux:1}-\eqref{eq:aux:2} can be obtained by Theorem~\ref{thm:1eaux}.

Then clearly $u$ is {\em subsolution} of~\eqref{eq:aux:1}-\eqref{eq:aux:2}. Then by comparison Lemma~\ref{thm:comp:1}, we have for each $k$,
\begin{align}\label{eq:aux:3}
u(x)\le v_{k,\tilde{R}}(x)\quad a.e.\ \text{in}\ B.
\end{align}
Define the function $\phi(s)$ for $s=\tilde{R}-|x-x_0|=d(x,\partial B)$ by
\begin{align}\label{eq:aux:3a}
\phi(s):=\lim\limits_{\tilde{R}\to R}\lim\limits_{k\to\infty}v_{k,\tilde{R}}(x),\quad	x\in B,
\end{align}
a quantity which could be finite or infinite to be determined.
Since $v_{k,\tilde{R}}$ is increasing function of $k$, we have by definition of limit, for every $k$, $v_{k,\tilde{R}}(s)\le \phi(s)$ for $0\le s\le \tilde{R}$. This together with~\eqref{eq:aux:3} give us
\begin{align}\label{eq:aux:4}
u(x)\le \phi(R-|x-x_0|)=\phi(d(x,\partial B_R(x_0)))\quad \text{for almost all}\ x\in B_R(x_0).
\end{align}
Indeed, if not for almost all $x\in B_R(x_0)$,~\eqref{eq:aux:4} holds, then there exists a point $x\in B_R(x_0)\setminus B$ such that $|B_R(x_0)\setminus B|\neq 0$ and $u(x)> \phi(R-|x-x_0|)=\phi(d(x,\partial B_R(x_0)))$. But then we can always find a concentric ball of radius $\tilde{R}<R$ such that~\eqref{eq:aux:3} and hence by~\eqref{eq:aux:3a},~\eqref{eq:aux:4} hold unless $|B_R(x_0)\setminus B|= 0$. This inequality~\eqref{eq:aux:4} is the desired inequality~\eqref{eq:aux:5}. Furthermore, we will show that $\phi(s)$ is finite for $0\le s <R$, and that $\phi(s)$
satisfies~\eqref{eq:aux:6}-\eqref{eq:aux:7} and is a decreasing function of $s$.


Let $v_k(x)=\psi_k(|x|)=\psi_k(r)$, $r$ denotes the distance from the center of the sphere and computing the radial form of~\eqref{eq:aux:1}-\eqref{eq:aux:2}, we conclude that $\psi_k(r)$ is the solution of
\begin{align}
w(r)\left((p-1)(\psi'(r))^{p-2}\psi''(r)+\frac{w'(r)}{w(r)}\Phi_{p}(\psi'(r))+\frac{N-1}{r}(\psi'(r))^{p-1}\right)~~~~~~~~~~~~~~\nonumber\\
= h(\psi(r)),\quad 0\le r<\tilde{R}\label{eq:rad:1}\\
\psi'(0)=0,\label{eq:rad:2}\\
\psi(\tilde{R})=k.\label{eq:rad:3}
\end{align}
Equation~\eqref{eq:rad:2} is coming from the regularity of $\psi$ at $r=0$. 
Note that since the solution of the problem~\eqref{eq:rad:1}-\eqref{eq:rad:2} is unique, every real no. $k$ uniquely determines $\psi_k(0)$ which is by comparison principle monotonic increasing in $k$. Therefore, $k$ is itself determined by $\psi_k(0)$. Thus we may replace the boundary condition~\eqref{eq:rad:3} by
\begin{align}\label{eq:rad:4}
\psi_k(0)=\psi_{0}.
\end{align}
As $\psi_0$ increases, as $k=\psi(\tilde{R})$ increases. We will show that $\lim\limits_{\tilde{R}\to R}\psi(\tilde{R})$ is infinite for some value of $\psi_0$ which is the $\lim\limits_{\tilde{R}\to R}\lim\limits_{k\to\infty}\psi_{0}$ defined in~\eqref{eq:aux:3a}.

We multiply~\eqref{eq:rad:1} by $r^{N-1}$, and we obtain
\begin{align}\label{eq:rad:7}
\left( r^{N-1}w(r)\psi'(r)^{p-1}\right)'=r^{N-1}h(\psi(r)).
\end{align}
Integrating~\eqref{eq:rad:7} from $0$ to $r$, we have
\begin{align}\label{eq:rad:8}
w(r)\psi'(r)^{p-1}=r^{1-N}\int_{0}^{r} s^{N-1}h(\psi(s))\, ds
\end{align}
Recall that $\psi$ is locally $C^{1,\beta},\ \beta\in(0,1)$, see  Theorem~\ref{thm:reg:loc}. Hence $\psi$ locally solves the problem~\eqref{eq:rad:8}.
We observe from~\eqref{eq:rad:8} that $\psi'\ge 0$. Therefore, $\psi$ is a nondecreasing function and we have from~\eqref{eq:rad:8}
\begin{align}\label{eq:rad:9}
w(r)\psi'(r)^{p-1}\le r^{1-N}h(\psi(r))\int_{0}^{r} s^{N-1}\, ds=\frac{r}{N}h(\psi(r)).
\end{align}
We use~\eqref{eq:rad:9} in~\eqref{eq:rad:1}, and we obtain
\begin{align}\label{eq:rad:9a}
(w(r)\psi'(r)^{p-1})'\ge \left(1-\frac{N-1}{N} \right)h(\psi(r))=\frac{h(\psi(r))}{N}.
\end{align}
Note also that since $w(r)>0$ and $\psi'\ge 0$, then from~\eqref{eq:rad:1} we have
\begin{align}\label{eq:rad:12}
(w(r)\psi'(r)^{p-1})'\le h(\psi(r)).
\end{align}
Combining~\eqref{eq:rad:9a} and~\eqref{eq:rad:12}, we have
\begin{align}\label{eq:rad:13}
h(\psi(r))\ge (w(r)\psi'(r)^{p-1})'\ge \frac{h(\psi(r))}{N}.
\end{align}
Multiply~\eqref{eq:rad:13} by $\frac{p}{p-1}w^{\frac{1}{p-1}}(r)\psi'(r)$, we obtain
\begin{align}
\frac{p}{p-1}w^{\frac{1}{p-1}}(r)h(\psi(r))\psi'(r)\ge (w^{\frac{p}{p-1}}(r)\psi'(r)^{p})'\ge \frac{p}{p-1}\frac{w^{\frac{1}{p-1}}(r)h(\psi(r))\psi'(r)}{N}.
\end{align}
Integrating from $0$ to $r$, we have
\begin{align}\label{eq:rad:14}
H(\psi,\psi_0)\ge w(r)^{\frac{p}{p-1}}\psi'(r)^{p}\ge \frac{H(\psi,\psi_0)}{N},
\end{align}
where $H(\psi,\psi_0)=\frac{p}{p-1}\int_{\psi_0}^{\psi}w^{\frac{1}{p-1}}(\psi^{-1}(z))h(z)\, dz$. Next taking the $p$-th root of the reciprocal of each term~\eqref{eq:rad:14} and integrate again from $0$ to $r$, we obtain
\begin{align}\label{eq:rad:15}
\int_{\psi_0}^{\psi(r)}w(\psi^{-1}(z))^{\frac{1}{p-1}}(H(z,\psi_0))^{-1/p}\, dz
\le r
\le {N}^{1/p}\int_{\psi_0}^{\psi(r)}w(\psi^{-1}(z))^{\frac{1}{p-1}}(H(z,\psi_0))^{-1/p}\, dz.
\end{align}
We can write from~\eqref{eq:rad:3}, \eqref{eq:rad:4} and \eqref{eq:rad:15}
\begin{align*}
\int_{\psi_k(0)}^{k}w(\psi^{-1}(z))^{\frac{1}{p-1}}(H(z,\psi_k(0)))^{-1/p}\, dz
\le \tilde{R}
\le {N}^{1/p}\int_{\psi_k(0)}^{k}w(\psi^{-1}(z))^{\frac{1}{p-1}}(H(z,\psi_k(0)))^{-1/p}\, dz.
\end{align*}
Then we have for any $\psi_{k}(0)\in\mathbb{R}$,
\begin{align*}
\tilde{R}
\le {N}^{1/p}\int_{\psi_k(0)}^{k}w(\psi^{-1}(z))^{\frac{1}{p-1}}(H(z,\psi_k(0)))^{-1/p}\, dz~~~~~~~~~~~~~~~~~~~~~~~~~~~~~\\
\le {N}^{1/p}\int_{\psi_{k}(0)}^{\infty}w(\psi^{-1}(z))^{\frac{1}{p-1}}(H(z,\psi_{k}(0)))^{-1/p}\, dz.
\end{align*}
Hence from~\eqref{eq:rad:15} we can write as
\begin{align}\label{eq:rad:15a}
\int_{\psi_0}^{\psi(r)}w(\psi^{-1}(z))^{\frac{1}{p-1}}(H(z,\psi_0))^{-1/p}\, dz
\le r
\le {N}^{1/p}\int_{\psi_0}^{\infty}w(\psi^{-1}(z))^{\frac{1}{p-1}}(H(z,\psi_0))^{-1/p}\, dz.
\end{align}

In order to converge the integral in~\eqref{eq:rad:15a} as $\psi$ becomes infinite, we have the weighted Keller-Osserman condition~\eqref{eq:ko:cond} as follows
\[
\lim_{t\to\infty}\int_{0}^{t}w(t^{-1}(z))^{\frac{1}{p-1}}(H(z,0))^{-1/p}\, dz<\infty.
\]
Then the integral~\eqref{eq:rad:15a} also converges for any value of $\psi_0$. Suppose we say the limit of the integral as $I(\psi_0)$, then by~\eqref{eq:rad:14} and~\eqref{eq:rad:15a} it follows that
\begin{align}\label{eq:rad:16}
I(\psi_0)\le r\le N^{1/p}I(\psi_0),
\end{align}
where
\begin{align}\label{eq:rad:17}
I(\psi_0)=\lim_{\psi(r)\to\infty}\int_{\psi_0}^{\psi(r)}w(\psi^{-1}(z))^{\frac{1}{p-1}}(H(z,\psi_0))^{-1/p}\, dz.
\end{align}
It is evident that for each $\psi_0$, $\psi$ becomes infinite at a finite value of $r$ in the range obtained in~\eqref{eq:rad:16}. 

Since the value of $r\in [I(\psi_0), N^{1/p}I(\psi_0)]$ depends on $\psi_0$, we denote the corresponding value of $r$ as $\tilde{R}:[0,\infty]\mapsto (0,\infty)$ by $\psi_0\to \tilde{R}(\psi_0)$, i.e., $r=\tilde{R}(\psi_0)$ for some $\psi_0$. Observe that, by uniqueness and continuous dependence of solutions of~\eqref{eq:rad:1}-\eqref{eq:rad:3}, the function $\tilde{R}(\psi_0)$ is continuous and nonincreasing.

Furthermore, for some $\psi_0$,
\begin{align}\label{eq:rad:17a}
I(\psi_0)\le \tilde{R}(\psi_0)\le N^{1/p}I(\psi_0),
\end{align}
and in~\eqref{eq:rad:17}, the integral 
\begin{align}\label{eq:rad:18}
I(\psi_0)\rightarrow +\infty,\ \text{as}\ \psi_0\to -\infty;\quad
I(\psi_0)\rightarrow 0,\ \text{as}\ \psi_0\to +\infty.
\end{align}
Since $\psi_0\to \tilde{R}(\psi_0)$ is continuous for all $\psi_0$, by~\eqref{eq:rad:17a}, $\psi_0\to \tilde{R}(\psi_0)$ inherits the same limits of~\eqref{eq:rad:18}. 
Therefore, from~\eqref{eq:rad:16}, it is evident that
\begin{align*}
\tilde{R}(\psi_0)\rightarrow +\infty,\ \text{as}\ \psi_0\to -\infty;\quad
\tilde{R}(\psi_0)\rightarrow 0,\ \text{as}\ \psi_0\to +\infty.
\end{align*}
 Now we are ready to define $\phi(s)$ as the {\em inverse} of $\tilde{R}(\psi_0)$ as
\begin{align*}
\phi(s):=\min\{\psi_k(0)|\ \tilde{R}(\psi_k(0))=s \},
\end{align*}
which is decreasing and satisfies~\eqref{eq:aux:6}-\eqref{eq:aux:7}. Consequently, by~\eqref{eq:aux:4}
\[
u(x)\le \phi(R-|x-x_0|)=\phi(d(x,\partial B_{R}(x_0)))\quad \text{for almost all}\ x\in B_R(x_0).
\]
\end{proof}

We give now proof of the existence Theorem~\ref{thm:exis:blw} for the problem~\eqref{eq:1}-\eqref{eq:2}.
\begin{proof}[Proof of Theorem~\ref{thm:exis:blw}] Proof follows the steps.\\
{\sc Step 1.}
Let for each $k=1,2\cdots$, $u_k\in W^{1,p}_{\rm loc}(\B)\cap C(\B)$ be a weak solution of
\begin{align}\label{eq:1aux}
\begin{cases}
{\rm div}(d^{\alpha}(x,\partial\B)\Phi_p(\nabla u)) = b(x)f(u),\ x\in\B,\\
u(x)=k,\ \ x\in\partial\B.
\end{cases}
\end{align}
Here the boundary condition can be seen as $(u(x)-k)\to 0$ as $x\to y$ for all $y\in\partial\B$. Then by Theorem~\ref{thm:1eaux} we have the existence of solutions to~\eqref{eq:1aux}.\\
{\sc Step 2.} Note that we have $\Phi_{p}(t)=|t|^{p-1}{\rm sgn\, t}$, and consequently, as zero if $t = 0$. Since $f(0)=0$, it is easily seen that $u\equiv 0$ is a solution of the above Dirichlet problem~\eqref{eq:1aux} with $k=0$. Then by the comparison principle Lemma~\ref{thm:comp:1}, we have
\begin{align*}
0\le u_k(x)\le u_{k+1}(x),\quad x\in\B,\quad\forall\ k=0,1,2,\cdots.
\end{align*}
{\sc Step 3.} 
We show that $\{u_k\}$ is uniformly bounded on compactly embedded sub-domains of $\B$. 
Note that for each $x\in\B$ we can find a unique point on $y\in \partial\B$ such that $d(x,\partial\B)=|x-y|=R-|x-x_0|$.\\
If $b(x)>0,\ x\in B(x_0)$, then there is a concentric compact ball $\bar{B}_{r}(x_0)\subset\B$ of radius $r,\ 0<r<1$ 
such that
 $b(x)>0,\ \forall x\in \bar{B}_{x_0}(r)$. Let $0<m:=\min\{b(x),\ x\in B_{r}(x_0) \}$, it exists since $b\in C(\bar{B}_{r}(x_0))$. Furthermore, let $w$ be a blow-up solution (weak) of
\begin{align*}
\begin{cases}
{\rm div}(d^{\alpha}(x,\partial\B)\Phi_p(\nabla u)) = mf(u),\ x\in B_{r}(x_0),\\
u(x)\to\infty\ \ x\to\partial B_{r}(x_0).
\end{cases}
\end{align*}
Note that $d^{\alpha}(x,\partial\B)\in L^{\infty}(\bar{B}_{r}(x_0))$ be radial with respect to $x_0$.
Then  
 we conclude the existence of such blow-up solution follows from Theorem~\ref{thm:exis:1a}. 

 Now we can see that $u_k$ is subsolution to ${\rm div}(d^{\alpha}(x,\partial\B)\Phi_p(\nabla u)) = mf(u),\ x\in B_{r}(x_0)$ with $u_{k}(x)\le w(x)$ for $x\in \partial B_{r}(x_0)$, for each $k=1,2,\cdots$, and
\begin{align*}
&\int_{B_{r}(x_0)}\Phi_p(\nabla u_k)\nabla\phi\, d^{\alpha}(x,\partial\B)dx+\int_{B_{r}(x_0)}mf(u_k)\phi\, dx\\
&\stackrel{f\ge 0}{\le} \int_{B_{r}(x_0)}\Phi_p(\nabla u_k)\nabla\phi\, d^{\alpha}(x,\partial\B)dx+\int_{B_{r}(x_0)}b(x)f(u_k)\phi\, dx=0,\quad\forall\ \phi(\ge 0)\in W^{1,p}_{0}(B_{r}(x_0)).
\end{align*}
The comparison Lemma~\ref{thm:comp:1}, we have $u_k\le w$ in $B_{r}(x_0)$ for all $k=1,2,\cdots$. Since $w$ is locally bounded we obtain that $u_k\le C$ in $B_{r}(x_0)$ for all $k=1,2,\cdots$ and for some $C>0$.

 Thus, there is a compact ball $\bar{B}_{r}(x_0), 0<r<1$ and a positive constant $C_r$ such that $0\le u_k\le C_r,\ k=1,2,\cdots$ on $B_{x_0}(r)$. Since, $r$ is arbitrary, thereby we conclude that $\{u_k\}$ is uniformly bounded on $\bar{B}_{r}(x_0), 0<r<1$. 
\\
{\sc Step 4.} The DiBenedetto-Tolksdorf $C^{1,\beta}$ interior regularity results implies that the sequences $\{u_k\}$ and $\{\nabla u_k\}$ are equicontinuous in a compact subset $K\Subset\B$. Hence, we can choose a subsequence (Arzel\`a–Ascoli theorem), relabeled again by $\{u_k\}$, such that $u_k\to u$ and $\nabla u_k\to v$ uniformly on compact subsets $K\Subset\B$ for some $u \in C(\B)$ and $v\in (C(\B))^N$. Infact, $v=\nabla u$ on $\B$, and from the interior $C^{1,\beta}$ estimate~\eqref{eq:reg:est} we conclude that $\nabla u\in C^{\beta}(\B),\ \beta\in(0,1)$. Thus $u\in W^{1,p}_{\rm loc}(\B)\cap C^{1,\beta}(\B)$. Let $K\Subset\B$ and $\phi\in W^{1,p}_{0}(\B)$ such that ${\rm supp}\,\phi\subseteq K$. Again from~\eqref{eq:reg:est}, we have that $|\nabla u_k|^{p-1}|\nabla\phi|\le C|\nabla\phi|$, and the continuity of the mapping $\mathbb{R}^{N}\ni\zeta\to|\zeta|^{p-2}\zeta$ implies that $d^{\alpha}(x)|\nabla u_k|^{p-2}\nabla u_k\cdot\nabla\phi\to d^{\alpha}(x)|\nabla u|^{p-2}\nabla u\cdot\nabla\phi$ for $x\in K$ as $k\to\infty$. Therefore, by Lebesgue dominated convergence theorem, we have
\begin{align*}
\int_{K} d^{\alpha}(x)|\nabla u_k|^{p-2}\nabla u_k\cdot\nabla\phi\, d^{\alpha}dx \to \int_{K}d^{\alpha}(x)|\nabla u|^{p-2}\nabla u\cdot\nabla\phi\, d^{\alpha}dx.
\end{align*}
Furthermore, the monotonicity of $f$ as $0\le f(u_k)\le f(u_{k+1})$, and continuity of $f$ as $f(u_k(x))\to f(u(x))$ for each $x\in K$, together with monotone convergence theorem implies that
\begin{align*}
\int_{K} bf(u_k)\phi\, dx\to \int_{K} bf(u)\phi\, dx.
\end{align*}
Therefore, it follows that
\begin{align*}
\int_{K}d^{\alpha}(x)|\nabla u|^{p-2}\nabla u\cdot\nabla\phi\, d^{\alpha}dx=-\int_{K} bf(u)\phi\, dx,\quad \phi\in W^{1,p}_{0}(\B),\ {\rm supp}\,\phi\subseteq K,
\end{align*}
and hence $u$ is a local solution (weak) of ${\rm div}(d^{\alpha}\Phi_p(\nabla u)) = b(x)f(u),\ x\in\B$. Moreover, since $u_k = k$ on $\partial\B$, we conclude that $u(x)\to\infty$ as $x\to\partial\B$.
\end{proof}
\section{The blow-up rate in one dimension}\label{sec:1d}
Let us assume, for simplicity, $b(x)\equiv 1,\ f(u)=u^q,\ q>p-1>\alpha$ in~\eqref{eq:1}.
We want to find the precise blow-up rate at the boundary of~\eqref{eq:1}-\eqref{eq:2}, at least in the radially symmetric case. Thus, we first investigate the blow-up rate of the following one dimensional degenerate problem.
\begin{align}\label{eq:12}
\begin{cases}
((R-x)^{\alpha}\Phi_{p}(u'(x)))'=u^q,\quad x\in(0,R),\\
\lim\limits_{x\to R}u(x)=\infty,\\
\end{cases}
\end{align}
for some $p-1>\alpha\in\mathbb{R}$. Next we employ the following change of variables
\begin{align}
u(x)=(R-x)^{-\beta}\psi(x),\quad x\in(0,R),
\end{align}
where $\beta>0$, to be determined, and successively, we have
\begin{align*}
\Phi_{p}(u'(x))=(R-x)^{-(\beta+1)(p-1)}\Phi_{p}(\tilde{\psi}(x));~~~~~~~~~~~~~~~~~~~~~~~~~~~~~~~~~~~~~~~~~~~~~~~\\
 ((R-x)^{\alpha}\Phi_{p}(u'(x)))'~~~~~~~~~~~~~~~~~~~~~~~~~~~~~~~~~~~~~~~~~~~~~~~~~~~~~~~~~~~~~~~~~~~~~~~~~~~~~~~&\\
 =[(\beta+1)(p-1)-\alpha](R-x)^{-(\beta+1)(p-1)+\alpha-1}\left(\Phi_{p}(\tilde{\psi}(x))+(R-x)\Delta_{p}\tilde{\psi}(x)\right),
\end{align*}
where
\begin{align}\label{eq:psi:til}
\tilde{\psi}(x):=\beta\psi(x)+(R-x)\psi'(x).
\end{align}
Then the equation~\eqref{eq:12} turns out to be
\begin{align}\label{eq:13}
[(\beta+1)(p-1)-\alpha]\left(\Phi_{p}(\tilde{\psi}(x))+(R-x)\Delta_{p}\tilde{\psi}(x)\right)=(R-x)^{-\beta q+(\beta+1)(p-1)-\alpha+1}\psi^{q}(x).
\end{align}
subject to the boundary conditions
\begin{align*}
0<\psi(R)<\infty,
\end{align*}
so that the exact blow-up rate of $u$ at $R$, will be given by $\beta$. 
Since we are looking for the blow-up rate when $x\to R$, we assume
\begin{align*}
\lim\limits_{x\to R}(R-x)\psi'(x)=0,
\end{align*}
which gives from~\eqref{eq:psi:til} that
\begin{align*}
\tilde{\psi}(R)=\beta\psi(R)>0.
\end{align*}
Since $\tilde{\psi}$ is continuous on $[0,R]$, there exists a $\delta>0$ such that $\tilde{\psi}(x)>0,\ x\in(R-\delta,R]$. This helps us to get the following limits using the continuity of the operator $\Phi_p(\cdot)$ for positive functions nearby the point $R$.
\begin{align*}
\lim_{x\to R}\Phi_p(\tilde{\psi}(x))=\beta^{p-1}\psi^{p-1}(R),\quad \lim_{x\to R}(R-x)\Delta_p(\tilde{\psi}(x))=0.
\end{align*}
This necessarily gives us
\begin{align}\label{eq:blw:rate}
\psi(R)=\left[\beta^{p-1}((\beta+1)(p-1)-\alpha)\right]^{\frac{1}{q-(p-1)}}\qquad\text{and}\quad 
\beta=\frac{p-\alpha}{q-(p-1)}.
\end{align}
\begin{remark}
From the above blow-up rate near the end point, it seems that the blow-up is getting demolished for $\alpha\ge p$. However, when $q<p-1$ and $\alpha>p$, we may still have from~\eqref{eq:blw:rate}, the blow-up on the boundary, but in that case we do not meet the conditions in~\ref{itm:f1} unless $q>p-1$
\end{remark}
\begin{remark}
One may also consider
\begin{align}\label{eq:121}
\begin{cases}
((R-x)^{\alpha}\Phi_{p}(u'(x)))'=b(x)u^q,\quad x\in(0,R),\\
\lim\limits_{x\to R}u(x)=\infty,\\
u(0)=0,
\end{cases}
\end{align}
where $b(x)=B(x)(R-x)^{\gamma},\ B(R)\neq 0,\ \gamma\ge 0$. Then the expressions in~\eqref{eq:blw:rate} turn out to
\begin{align*}
\psi(R)=\left[\frac{\beta^{p-1}((\beta+1)(p-1)-\alpha)}{B(R)}\right]^{\frac{1}{q-(p-1)}}\qquad\text{and}\quad 
\beta=\frac{p+\gamma-\alpha}{q-(p-1)}.
\end{align*}
\end{remark}

\section{Nonlinearity as regularly varying functions}\label{sec:reg:func}
We now consider the original problem~\eqref{eq:1}-\eqref{eq:2} with some assumptions on the absorption term $f(u)$ and the coefficient $b(x)$. We assume $f$ belongs to a class of regularly varying functions, defined below. A member of this class of functions to be considered as an absorption term makes the problem more accessible in application point of view.
\begin{definition}\label{def:rv}
A measurable function $f:\mathbb{R}^{+}\to \mathbb{R}^{+}$, is called {\em regularly varying at infinity} with index $\rho\in\mathbb{R}$, written as $f\in RV_{\rho}$, if for $\xi>0$
\begin{align}\label{eq:rv}
\lim_{t \to\infty} f(\xi t)/f(t) = \xi^{\rho} .
\end{align}
Without any obligation we can define a regularly varying function at zero. This is equivalent to say that $f(t)$ is regularly varying at infinity if and only if $f(t^{-1})$ is regularly varying at zero.
\end{definition}
\begin{remark}
When $\rho=0$, we call $f$ is {\em slowly varying}, or $f \in RV_0$. Clearly, if $f\in RV_{\rho}$, then $f(u)/u^{\rho} \in RV_0$. This shows that for a slowly varying function $L(u)$, its always possible to represent a $\rho$-varying function as $u^{\rho}L(u)$. The canonical $\rho$-varying function is $u^{\rho}$. The functions $\log(1+u),\ \log\log(e+u)$, $\exp\{(\log u)^\alpha\},\ 0<\alpha<1$ are slowly varying function. Furthermore, any measurable function with positive limit at infinity is also slowly varying.
\end{remark}
Here we list up some properties of the regularly varying functions, see Seneta~\cite{sene}.
\begin{proposition}{(Uniform Convergence Theorem~\cite[Theorem 1.1, p-2]{sene})}\label{prop:unif:conv}
If $f\in RV_{\rho}$, then~\eqref{eq:rv} holds uniformly for $\xi\in[a,b]$ for every fixed $0<a<b<\infty$.
\end{proposition}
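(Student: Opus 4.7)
The plan is to follow Karamata's classical argument: reduce to the slowly varying case, pass to the additive (logarithmic) setting, and then exploit a measure-theoretic Steinhaus-type step. First, I would write $f(t) = t^{\rho} L(t)$, where $L(t) := f(t)/t^{\rho}$ is slowly varying by the remark following Definition~\ref{def:rv}. Since $\xi \mapsto \xi^{\rho}$ is continuous and therefore uniformly continuous on any $[a,b] \subset (0,\infty)$, the claim reduces to showing that $L(\xi t)/L(t) \to 1$ uniformly for $\xi \in [a,b]$ as $t \to \infty$.

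Next, set $h(x) := \log L(e^x)$. Slow variation of $L$ then translates to $h(x + u) - h(x) \to 0$ pointwise in $u \in \mathbb{R}$ as $x \to \infty$, and uniform convergence in $\xi \in [a,b]$ becomes uniform convergence of this additive increment in $u$ over the compact interval $[\log a, \log b]$. After a translation, it suffices to handle $u \in [0, 2c]$ for some $c > 0$.

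The decisive step is measure-theoretic. Fix $\epsilon > 0$ and, for each $T > 0$, define
\[
E_T := \{u \in [0,2c] : |h(x + u) - h(x)| \le \epsilon \text{ for all } x \ge T\}.
\]
Measurability of $h$ (inherited from that of $f$) gives that each $E_T$ is Lebesgue measurable. The sets $E_T$ are nested increasingly in $T$, and by pointwise convergence $\bigcup_T E_T = [0,2c]$ up to a null set. Consequently, for some sufficiently large $T_0$, we have $|E_{T_0}| > c$, so by Steinhaus's theorem the difference set $E_{T_0} - E_{T_0}$ contains an open neighbourhood $V$ of $0$. For any $u \in V$, write $u = u_1 - u_2$ with $u_i \in E_{T_0}$; the identity
\[
h(x + u) - h(x) = \bigl(h(y + u_1) - h(y)\bigr) - \bigl(h(y + u_2) - h(y)\bigr), \quad y := x - u_2,
\]
then yields $|h(x+u) - h(x)| \le 2\epsilon$ for every $x \ge T_0 + 2c$. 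A finite covering of $[\log a, \log b]$ by translates of $V$ produces the desired uniform estimate, and exponentiating returns the claim for $L$, whence for $f$ upon reinstating the factor $\xi^{\rho}$.

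The main obstacle is the measure-theoretic step itself: verifying measurability of $E_T$ by a countable-intersection/separability argument on the tail parameter $x$, and arranging the Steinhaus decomposition so that both translated arguments $y + u_1$ and $y + u_2$ actually fall in the tail region on which the defining bound applies. Once this technical core is in place, the covering argument, exponentiation, and accommodation of the continuous factor $\xi^{\rho}$ are routine bookkeeping.
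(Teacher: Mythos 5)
The paper does not actually prove this proposition; it is quoted verbatim from Seneta's monograph, so there is no in-paper argument to compare against. Your proposal is the classical Karamata route (reduce to the slowly varying factor, pass to $h(x)=\log L(e^x)$, and run a Steinhaus-type argument on the additive increments), and in outline it is the right proof. The reduction to $L$, the translation to $u\in[0,2c]$, the Steinhaus decomposition $u=u_1-u_2$ with the shifted base point $y=x-u_2\ge T_0$, and the finite covering of $[\log a,\log b]$ by translates of $V$ are all sound.

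The one genuine soft spot is exactly the one you flag, but your proposed repair does not work as stated. The set $E_T=\{u\in[0,2c]:|h(x+u)-h(x)|\le\epsilon\ \forall x\ge T\}$ is an \emph{uncountable} intersection of measurable sets, and since $h$ is only assumed measurable (not continuous), you cannot recover the full intersection by restricting $x$ to a countable dense subset; ``separability in the tail parameter'' is not available here. The standard fixes are: (a) argue by contradiction, extracting a sequence $x_n\to\infty$ and $u_n\in[0,2c]$ witnessing failure of uniformity, and replace $E_T$ by $\bigcap_{m\ge n}\{u:|h(x_m+u)-h(x_m)|\le\epsilon\}$, which is a countable intersection and hence measurable --- one then shows both this set and its translate by $u_n$ must eventually fill more than half of $[0,2c]$, forcing them to intersect and contradicting $|h(x_n+u_n)-h(x_n)|\ge 2\epsilon$; or (b) apply Egorov's theorem to the family $g_x(u)=h(x+u)-h(x)$ along a sequence to get uniform convergence off a set of small measure, and then run Steinhaus on the good set. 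With either substitution your argument closes; without it, the continuity-of-measure step ``$|E_{T_0}|>c$ for some $T_0$'' is not justified.
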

\begin{proposition}{(The Karamata Representation Theorem~\cite[Theorem 1.3.1, p-12]{bingham87} or~\cite[Theorem 1.2, p-2]{sene})}\label{prop:kara:rep}
A function $L$ is slowly varying at infinity if and only if $L$ can be represented as
\begin{align}
L(t)=z(t)\exp\left(\int_{a}^{t}\frac{y(\tau)}{\tau}\, d\tau \right),\ t\ge a,\ \text{for some}\ a>0,
\end{align}
where $z,y:\mathbb{R}^{+}\to \mathbb{R}^{+}$ are continuous and for $t\to\infty$, $y(t)\to 0$ and $z(t)\to c\in(0,\infty)$.
\end{proposition}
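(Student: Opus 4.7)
The plan is to prove the equivalence in two parts. Sufficiency is an immediate consequence of the Uniform Convergence Theorem (Proposition~\ref{prop:unif:conv}), while necessity requires a Karamata-style smoothing construction to produce $z$ and $y$ from an abstractly given slowly varying $L$.

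For sufficiency, assume $L$ admits the representation. For any fixed $\xi>0$,
\[
\frac{L(\xi t)}{L(t)} \;=\; \frac{z(\xi t)}{z(t)}\exp\!\left(\int_t^{\xi t}\frac{y(\tau)}{\tau}\,d\tau\right) \;=\; \frac{z(\xi t)}{z(t)}\exp\!\left(\int_1^{\xi}\frac{y(tu)}{u}\,du\right),
\]
after the substitution $\tau = tu$. Since $z(t)\to c\in(0,\infty)$ the prefactor tends to $1$; since $y(tu)\to 0$ uniformly for $u$ in the compact interval with endpoints $1$ and $\xi$, the exponent tends to $0$. Hence $L(\xi t)/L(t)\to 1$, i.e.\ $L\in RV_0$.

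For necessity, suppose $L$ is slowly varying. Proposition~\ref{prop:unif:conv} yields $L(\xi t)/L(t)\to 1$ uniformly on each compact subinterval of $(0,\infty)$, and in particular forces $L$ to be locally bounded above and bounded away from $0$ on some $[a,\infty)$. Introduce the smoothing
\[
\tilde L(t) := \frac{1}{\log 2}\int_t^{2t}\frac{L(s)}{s}\,ds,\qquad t\ge a,
\]
which is $C^1$ and satisfies $t\tilde L'(t)=(L(2t)-L(t))/\log 2$. Uniform convergence gives $\tilde L(t)/L(t)\to 1$, whence
\[
y(t) \;:=\; \frac{t\tilde L'(t)}{\tilde L(t)} \;=\; \frac{L(t)}{\tilde L(t)}\cdot\frac{(L(2t)/L(t))-1}{\log 2} \;\longrightarrow\; 0.
\]
Integrating the identity $(\log\tilde L)'(t)=y(t)/t$ on $[a,t]$ yields $\tilde L(t)=\tilde L(a)\exp\!\bigl(\int_a^t y(\tau)/\tau\,d\tau\bigr)$, and defining $z(t):=L(t)\,\tilde L(a)/\tilde L(t)$ produces the desired representation with $z(t)\to \tilde L(a)\in(0,\infty)$.

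The main obstacle is reconciling the regularity stated in the proposition with the construction: the smoothing above yields continuous $z,y$ only when $L$ itself is continuous, and the produced $y$ need not be non-negative as literally written in the statement. Both are standard subtleties of the Karamata representation and are typically resolved either by applying the smoothing a second time (giving the needed $C^1$ regularity and allowing $y$ to be absorbed into a positive function after a harmless translation), or by first replacing $L$ by an a.e.-equivalent continuous slowly varying representative, which is possible thanks to the local boundedness consequence of Proposition~\ref{prop:unif:conv}. Modulo these clarifications every convergence claim above reduces to uniform convergence of $L(\xi t)/L(t)$ on compact subsets of $(0,\infty)$.
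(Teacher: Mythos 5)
The paper offers no proof of this proposition: it is quoted verbatim from the literature (Bingham--Goldie--Teugels, Theorem 1.3.1, and Seneta, Theorem 1.2), so there is nothing internal to compare your argument against. Judged on its own terms, your proof is the standard Karamata argument and is essentially correct. The sufficiency direction is fine as written (note it does not actually use the Uniform Convergence Theorem, despite your opening sentence --- the substitution $\tau=tu$ plus $y(s)\to 0$ and $z(s)\to c$ already gives $L(\xi t)/L(t)\to 1$). The necessity direction is the classical logarithmic smoothing $\tilde L(t)=\frac{1}{\log 2}\int_t^{2t}L(s)s^{-1}\,ds$, and the computations $t\tilde L'(t)=(L(2t)-L(t))/\log 2$, $\tilde L/L\to 1$, $y\to 0$, $z=L\tilde L(a)/\tilde L\to\tilde L(a)$ are all correct, granting local integrability of $L$ on some $[a,\infty)$, which you correctly extract from the uniform convergence theorem.

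Two caveats, both of which you flag but one of which you underestimate. First, the continuity of $z$: since $z(t)=L(t)\tilde L(a)/\tilde L(t)$ inherits the regularity of $L$, the representation with \emph{continuous} $z$ can only hold when $L$ itself is continuous; no amount of iterated smoothing repairs this, because any representation with continuous $z$ and $y$ forces $L$ continuous. The correct general statement (as in the cited sources) takes $z$ merely measurable and bounded, so this is a defect of the proposition as transcribed in the paper rather than of your construction. Second, and more seriously, the requirement $y:\mathbb{R}^{+}\to\mathbb{R}^{+}$ cannot be salvaged by ``a harmless translation'': if $y\ge 0$ then the exponential factor is nondecreasing and bounded below by $1$ on $[a,\infty)$, forcing $\liminf_{t\to\infty}L(t)\ge c>0$, which the slowly varying function $L(t)=1/\log t$ violates. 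So that clause of the statement is simply false as written; the standard theorem only asserts $y(t)\to 0$ with $y$ real-valued. Your proof produces exactly the correct (real-valued) $y$, so the gap lies in the paper's formulation, not in your argument --- but your proposed fix for the sign of $y$ does not work and should be dropped.
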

One may take $z$ eventually bounded. We say that
\begin{align*}
\hat{L}(t)=z(t)\exp\left(\int_{a}^{t}\frac{y(\tau)}{\tau}\, d\tau \right),\ t\ge a,
\end{align*}
is {\em normalized slowly varying} at infinity and
\begin{align*}
f(t)=t^{\rho}\hat{L}(t),\ t\ge a,
\end{align*}
is {\em normalized regularly varying} at infinity with index $\rho$ (and denoted by $f\in NRV_{\rho}$). 
A function $f\in NRV_{\rho}$ if and only if
\begin{align}\label{eq:def:nrv}
f\in C^{1}[a,\infty)\ \text{for some}\ a>0\ \text{and}\ \lim\limits_{t\to\infty}\frac{tf'(t)}{f(t)}=\rho.
\end{align}
Similarly,
\begin{definition}\label{def:nrvz}
$h$ is called normalized regularly varying at zero with index $\rho$, written as $h\in NRVZ_{\rho}$ if $t\to h(t^{-1})$ belongs to $NRV_{-\rho}$.
\end{definition}
\begin{proposition}(\cite[p-7,18]{sene})\label{prop:reg:lim}
Let $L$ be slowly varying function at infinity. Then
\begin{itemize}
\item[(i)] Any function $0<f\in C^{1}[a,\infty),\ a>0$ satisfying $\lim\limits_{u\to\infty}uf'(u)/f(u)=\rho$ if and only if $f\in NRV_{\rho},\ -\infty<\rho<\infty$.
\item[\namedlabel{itm:p2}{(ii)}]For any $\gamma>0,\ u^{\gamma}L(u)\to\infty,\ u^{-\gamma}L(u)\to 0$ as $u\to\infty$.
\item[(iii)] For $\rho\in\mathbb{R}$ and $u\to\infty,\ \log L(u)/\log u\to 0$ and $\log(u^{\rho}L(u))/\log u\to\rho$.
\end{itemize}
\end{proposition}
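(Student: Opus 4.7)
My plan is to reduce all three parts to the Karamata representation (Proposition~\ref{prop:kara:rep}), which writes any slowly varying $L$ at infinity as
\[
L(t) = z(t)\,\exp\Bigl(\int_{a}^{t}\frac{y(\tau)}{\tau}\,d\tau\Bigr),\qquad t\ge a,
\]
with $y(\tau)\to 0$ and $z(\tau)\to c\in(0,\infty)$. The unifying observation will be that $y(\tau)=o(1)$ forces the integral term to grow like $o(\log t)$, and this single fact will drive each of the three conclusions.

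For part (i), I would handle the forward implication by logarithmic differentiation: if $f(t)=t^{\rho}\hat L(t)$ with $\hat L$ normalized slowly varying, then $z$ may be taken constant, so $tf'(t)/f(t)=\rho+t\hat L'(t)/\hat L(t)=\rho+y(t)\to\rho$. For the converse, I would set $y(t):=tf'(t)/f(t)-\rho$, which tends to $0$ by hypothesis, and integrate the identity $(\log f)'(t)=\rho/t+y(t)/t$ from $a$ to $t$. This yields
\[
f(t)=f(a)\,a^{-\rho}\,t^{\rho}\,\exp\Bigl(\int_{a}^{t}\frac{y(\tau)}{\tau}\,d\tau\Bigr),
\]
which is precisely the canonical $NRV_{\rho}$ form with constant prefactor $z\equiv f(a)a^{-\rho}$.

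For parts (ii) and (iii), I would first extract the one-parameter a priori estimate on $\log L$. Given $\varepsilon>0$, choose $T\ge a$ large enough that $|y(\tau)|<\varepsilon$ for $\tau\ge T$ and that $\log z$ is bounded on $[T,\infty)$. The representation then gives $|\log L(t)|\le C_{\varepsilon}+\varepsilon\log t$ for all $t\ge T$, with $C_{\varepsilon}$ independent of $t$. Part (ii) follows by combining this bound with $\log(t^{\pm\gamma}L(t))=\pm\gamma\log t+\log L(t)$ and choosing $\varepsilon<\gamma$: the dominant term is $(\pm\gamma\mp\varepsilon)\log t$, giving $t^{\gamma}L(t)\to\infty$ and $t^{-\gamma}L(t)\to 0$. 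Part (iii) follows by dividing the bound $|\log L(t)|\le C_{\varepsilon}+\varepsilon\log t$ by $\log t$, sending $t\to\infty$ and then $\varepsilon\to 0$, which yields $\log L(t)/\log t\to 0$; the second identity drops out algebraically from $\log(t^{\rho}L(t))/\log t=\rho+\log L(t)/\log t$.

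I do not anticipate a substantive obstacle; the whole proposition is essentially a repackaging of the Karamata representation. The only mildly delicate point is the converse half of (i), where one must verify that the ODE integration produces the genuinely \emph{normalized} form (with $z$ a true constant rather than merely convergent). This is automatic, since the constant of integration is literally the constant prefactor. Everything else is book-keeping around the $\varepsilon$–$T$ estimate on $\int_a^t y(\tau)/\tau\,d\tau$.
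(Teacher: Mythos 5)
The paper does not prove this proposition at all: it is quoted directly from Seneta (\cite[p-7,18]{sene}) as background material, so there is no in-paper argument to compare against. Your proof is correct and is essentially the standard textbook derivation: all three parts do indeed reduce to the Karamata representation, the key estimate $|\log L(t)|\le C_{\varepsilon}+\varepsilon\log t$ is obtained exactly as you describe, and the converse half of (i) via integrating $(\log f)'=\rho/t+y(t)/t$ is the classical argument, with the constant of integration supplying the constant prefactor $z\equiv f(a)a^{-\rho}$. One small point worth flagging: the paper's own definition of ``normalized slowly varying'' retains a non-constant factor $z(t)$ in front of the exponential (adding only that ``one may take $z$ eventually bounded''), under which the forward direction of (i) would not literally go through, since $\hat L$ need not then be differentiable. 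Your reading --- that normalization means $z$ is a genuine constant --- is the standard one (and the one under which the equivalence \eqref{eq:def:nrv} stated earlier in the paper is actually true), so your proof is the right one; it just silently corrects the paper's slightly loose definition.
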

\begin{remark}
If $f\in NRV_{\rho}$ then its primitive $F\in NRV_{\rho+1}$.
\end{remark}
\begin{proposition}\label{prop:assymp:reg:func}(Assymptotic behavior~\cite[Theorem 2.1, p-53]{sene})
If $L$ is a slowly varying function at infinity, then for $a\ge 0$ and $u\to\infty$, we have
\begin{itemize}
\item[(i)] $\int_{a}^{u}s^{\rho}L(s)\, ds \cong (\rho +1)^{-1}u^{\rho+1}L(u)$, for $\rho>-1$;
\item[(ii)] $\int_{t}^{\infty}s^{\rho}L(s)\, ds \cong (-\rho-1)^{-1}u^{\rho+1}L(u)$, for $\rho<-1$.
\end{itemize}
\end{proposition}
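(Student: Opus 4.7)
The plan is to establish both asymptotic equivalences by the substitution $s = ut$, which turns each integral into a dimensionless one against the ratio $L(ut)/L(u)$, and then to pass to the limit by splitting this transformed integral into a ``bulk'' part handled by the Uniform Convergence Theorem (Proposition~\ref{prop:unif:conv}) and a ``tail'' part handled by Potter-type bounds derived from the Karamata Representation Theorem (Proposition~\ref{prop:kara:rep}).

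For part (i), after substituting $s = ut$ the claim reduces to
$$(\rho+1)\int_{a/u}^{1} t^{\rho}\,\frac{L(ut)}{L(u)}\,dt \;\longrightarrow\; 1 \qquad\text{as } u\to\infty.$$
Given $\delta\in(0,1)$, I would split the integral at $t=\delta$. On $[\delta,1]$, Proposition~\ref{prop:unif:conv} gives $L(ut)/L(u)\to 1$ uniformly in $t$, so this portion contributes $(\rho+1)\int_\delta^1 t^\rho\,dt = 1-\delta^{\rho+1}$ in the limit. On $[a/u,\delta]$, I would extract from the representation $L(t)=z(t)\exp(\int_a^t y(\tau)/\tau\,d\tau)$ a Potter-type bound: for any $\epsilon\in(0,\rho+1)$ there exists $u_0$ such that $L(ut)/L(u)\le C\,t^{-\epsilon}$ whenever $u\ge u_0$ and $ut\ge u_0$. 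This yields a uniform bound $C(\rho+1)\int_0^{\delta} t^{\rho-\epsilon}\,dt$, finite since $\rho-\epsilon>-1$, and vanishing as $\delta\to 0$. Sending $u\to\infty$ and then $\delta\to 0$ closes the argument.

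For part (ii) the same substitution reduces matters to showing
$$(-\rho-1)\int_{1}^{\infty} t^{\rho}\,\frac{L(ut)}{L(u)}\,dt \;\longrightarrow\; 1.$$
I split at a large $M$: on $[1,M]$ the Uniform Convergence Theorem gives the limit $(-\rho-1)\int_1^M t^{\rho}\,dt\to 1$ as $M\to\infty$, while on $[M,\infty)$ a mirror Potter bound $L(ut)/L(u)\le C\,t^{\epsilon}$ (for any prescribed small $\epsilon>0$, valid for large $u$) bounds the tail by $C(-\rho-1)\int_M^\infty t^{\rho+\epsilon}\,dt$; choosing $\epsilon$ so that $\rho+\epsilon<-1$ makes this tail vanish as $M\to\infty$.

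The main technical obstacle is justifying the Potter-type bounds near the endpoints $t\to 0$ in (i) and $t\to\infty$ in (ii), where Proposition~\ref{prop:unif:conv} alone does not apply uniformly. The key observation that unlocks this step is that via Proposition~\ref{prop:kara:rep} the ratio $L(ut)/L(u)$ equals $\tfrac{z(ut)}{z(u)}\exp\bigl(\int_u^{ut}y(\tau)/\tau\,d\tau\bigr)$; a change of variable $\tau = u\sigma$ rewrites the exponent as $\int_1^t y(u\sigma)/\sigma\,d\sigma$, and since $y(\cdot)\to 0$ at infinity, for sufficiently large $u$ this integrand is bounded in absolute value by $\epsilon/\sigma$, yielding $|\log(L(ut)/L(u))|\le \epsilon|\log t|+o(1)$ and therefore the desired two-sided power bound $t^{\pm\epsilon}$.
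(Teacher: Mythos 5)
The paper does not actually prove this proposition --- it is quoted from Seneta \cite[Theorem 2.1, p.~53]{sene} and used as a black box --- so there is no internal proof to compare against. Your argument is, in substance, the classical proof of Karamata's integration theorem found in that reference: rescale by $s=ut$, apply the Uniform Convergence Theorem (Proposition~\ref{prop:unif:conv}) on the compact bulk, and control the problematic endpoint ($t\to 0$ in (i), $t\to\infty$ in (ii)) with a Potter-type bound extracted from the representation theorem (Proposition~\ref{prop:kara:rep}); your derivation of $|\log(L(ut)/L(u))|\le \epsilon|\log t|+o(1)$ from $y(\tau)\to 0$ is exactly the right mechanism, and part (ii) is complete as written. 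The one loose end is in part (i): your Potter bound is valid only where $ut\ge u_0$, i.e.\ $t\ge u_0/u$, so the sliver $t\in[a/u,\,u_0/u]$ (equivalently $s\in[a,u_0]$) is not covered by the estimate $C\,t^{-\epsilon}$, yet you bound the whole of $[a/u,\delta]$ by $C(\rho+1)\int_0^{\delta}t^{\rho-\epsilon}\,dt$. That sliver contributes the fixed quantity $\int_a^{u_0}s^{\rho}L(s)\,ds$ (finite under the implicit local integrability of $L$), which becomes negligible after dividing by $u^{\rho+1}L(u)\to\infty$ (Proposition~\ref{prop:reg:lim}\ref{itm:p2}, since $\rho+1>0$); one added line closes this. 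Note also that in the statement of (ii) as printed the lower limit of integration should read $u$ rather than $t$.
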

\begin{definition}
A positive measurable function $f:[a,\infty)\to\mathbb{R}$, for some $a>0$, is called {\em rapidly varying} at infinity if for each $\rho>1$,
\begin{align*}
\lim_{u\to\infty}\frac{f(u)}{u^{\rho}}=\infty.
\end{align*}
\end{definition}
\begin{proposition}
\begin{itemize}
\item[(i)] If a function $f\in NRV_{\rho}$, then $f'\in RV_{\rho-1}$
\item[(ii)] If a function $f$ is rapidly varying at infinity and $f'(u)$ is nondecreasing on $[a,\infty)$, for some $a>0$, then $f'$ is rapidly varying at infinity too.
\end{itemize}
\end{proposition}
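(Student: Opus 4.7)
The plan is to handle the two assertions independently, both by reducing to the characterization of normalized regular variation through the logarithmic derivative $tf'(t)/f(t)$.

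For part (i), I would begin from the characterization \eqref{eq:def:nrv}: $f \in NRV_\rho$ gives $f \in C^1[a,\infty)$ and $\lim_{t\to\infty} tf'(t)/f(t) = \rho$. Introducing the remainder $y(t) := tf'(t)/f(t) - \rho$, one has $y(t) \to 0$ as $t \to \infty$ and the pointwise identity $f'(t) = (\rho + y(t))f(t)/t$. For any fixed $\xi > 0$, this yields
$$\frac{f'(\xi t)}{f'(t)} \;=\; \frac{1}{\xi}\cdot\frac{\rho + y(\xi t)}{\rho + y(t)}\cdot\frac{f(\xi t)}{f(t)}.$$
Since $f \in NRV_\rho \subseteq RV_\rho$, the last factor tends to $\xi^\rho$ by Definition~\ref{def:rv}, and for $\rho \neq 0$ the middle factor tends to $1$ because $y \to 0$; one concludes $f'(\xi t)/f'(t) \to \xi^{\rho-1}$, which is precisely $f' \in RV_{\rho-1}$. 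The case $\rho = 0$ requires verifying that the auxiliary $y$ is itself slowly varying, which I would extract from the Karamata representation (Proposition~\ref{prop:kara:rep}): writing the normalized representation $f(t) = c\exp\!\left(\int_a^t y(s)/s\,ds\right)$ and comparing with the induced representation of $f'$ identifies the slowly varying part and gives $y(\xi t)/y(t)\to 1$.

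For part (ii), I would exploit the monotonicity of $f'$ directly, which gives an integrated lower bound. Since $f'$ is nondecreasing on $[a,\infty)$,
$$f(t) - f(a) \;=\; \int_a^t f'(s)\,ds \;\le\; (t-a)\,f'(t),\qquad t > a,$$
hence $f'(t) \ge \bigl(f(t)-f(a)\bigr)/(t-a)$. Fix any $\rho > 1$. Dividing by $t^\rho$ and rearranging,
$$\frac{f'(t)}{t^\rho} \;\ge\; \frac{t}{t-a}\cdot\frac{f(t)}{t^{\rho+1}} \;-\; \frac{f(a)}{(t-a)\,t^\rho}.$$
The second term vanishes as $t\to\infty$, while $f(t)/t^{\rho+1}\to\infty$ because $f$ is rapidly varying at infinity and $\rho+1 > 1$. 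Therefore $f'(t)/t^\rho \to \infty$ for every $\rho > 1$, which is exactly the definition of rapid variation; hence $f'$ is rapidly varying at infinity.

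The main obstacle sits in the $\rho = 0$ subcase of (i): when $\rho = 0$, the middle factor becomes $y(\xi t)/y(t)$, which is delicate since $y$ might change sign or oscillate near zero. A careful application of the uniform-convergence theorem (Proposition~\ref{prop:unif:conv}) together with the Karamata representation for the normalized slowly varying factor of $f$ is needed to promote $y \to 0$ to the stronger statement $y(\xi t)/y(t) \to 1$ locally uniformly in $\xi$. Once that technicality is settled, the entire proposition follows cleanly from the manipulations above.
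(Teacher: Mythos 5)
The paper states this proposition without proof (it is invoked as a standard fact from Karamata theory), so there is no in-paper argument to compare against; your proposal has to stand on its own. Your proof of (ii) is correct and complete: monotonicity of $f'$ gives $f'(t)\ge (f(t)-f(a))/(t-a)$ by integrating over $[a,t]$, and dividing by $t^{\rho}$ and invoking rapid variation of $f$ at exponent $\rho+1$ finishes it. Your proof of (i) for $\rho\neq 0$ is also correct (for $\rho<0$ one should strictly speaking conclude $-f'\in RV_{\rho-1}$, since $f'$ is eventually negative there, but that defect sits in the statement, not in your argument).

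The genuine gap is exactly where you located it, the case $\rho=0$ --- but the fix you sketch cannot be carried out, because the conclusion itself fails there. Take
\begin{equation*}
f(t)=\exp\left(\int_{e}^{t}\frac{2+\sin(\log s)}{s\log s}\,ds\right),
\end{equation*}
so that $tf'(t)/f(t)=(2+\sin\log t)/\log t\to 0$, hence $f\in NRV_{0}$ with $f'>0$, while
\begin{equation*}
\frac{f'(\xi t)}{f'(t)}=\frac{2+\sin(\log t+\log\xi)}{2+\sin(\log t)}\cdot\frac{\log t}{\xi\,\log(\xi t)}\cdot\frac{f(\xi t)}{f(t)}
\end{equation*}
has no limit as $t\to\infty$ for $\log\xi=\pi$: the second and third factors tend to $1/\xi$ and $1$, but the first oscillates (it equals $1$ along $t_n=e^{2\pi n}$ and $1/3$ along $t_n=e^{2\pi n+\pi/2}$). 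So $y(\xi t)/y(t)\to 1$ is simply not a consequence of $y\to 0$, and neither the uniform convergence theorem nor the Karamata representation can produce it --- the representation theorem imposes no regularity on $y$ beyond $y(t)\to 0$. The statement should therefore be read with $\rho\neq 0$ (the only case the paper uses, since it applies the proposition to $f\in RV_{\sigma+1}$ with $\sigma+1>p-1>0$), or else supplemented with a monotonicity hypothesis on $f'$ as in the monotone density theorem. With that caveat recorded, the rest of your argument stands.
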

\section{First order blow-up behavior}\label{sec:blow:1st}
In this section, we show our main result exposing the first order behavior for problem involving weighted $p$-Laplacian. We need the following auxiliary lemmas in the subsequent steps of the proof.
\begin{lemma}\label{lem:aux:freg}
Let $q=p/(p-1),\ p>1$. If $f\in RV_{\sigma+1}\ (\sigma>p-2)$ is continuous, then
\begin{align*}
\lim_{z\to\infty}\frac{(F(z))^{1/q}}{f(z)\int_{z}^{\infty}(F(s))^{-1/p}\, ds}=\frac{\sigma+2-p}{p(2+\sigma)},
\end{align*}
where $F$ is given in~\eqref{assum:f2}.
\end{lemma}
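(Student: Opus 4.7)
The idea is to reduce the whole expression to an application of Karamata's asymptotic formulas (Proposition~\ref{prop:assymp:reg:func}) twice: once on $F$ and once on the tail integral $\int_z^\infty (F(s))^{-1/p}\,ds$.

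First I would write $f(t)=t^{\sigma+1}L(t)$ with $L$ slowly varying, which is possible because $f\in RV_{\sigma+1}$. Since $\sigma>p-2\ge-1$, we have $\sigma+1>-1$, so part (i) of Proposition~\ref{prop:assymp:reg:func} applies to yield
\begin{equation*}
F(t)=\int_0^t s^{\sigma+1}L(s)\,ds \;\sim\;\frac{t^{\sigma+2}L(t)}{\sigma+2}\;\sim\;\frac{t\,f(t)}{\sigma+2}\qquad\text{as }t\to\infty.
\end{equation*}
In particular $F\in RV_{\sigma+2}$, so $(F(s))^{-1/p}$ is regularly varying at infinity with index $-(\sigma+2)/p$.

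Next, the hypothesis $\sigma>p-2$ is precisely equivalent to $-(\sigma+2)/p<-1$, so the tail integral converges and part (ii) of Proposition~\ref{prop:assymp:reg:func} (applied with $\rho=-(\sigma+2)/p$) gives
\begin{equation*}
\int_z^\infty (F(s))^{-1/p}\,ds \;\sim\; \frac{z\,(F(z))^{-1/p}}{(\sigma+2)/p-1} \;=\;\frac{p\,z\,(F(z))^{-1/p}}{\sigma+2-p}\qquad\text{as }z\to\infty.
\end{equation*}

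Plugging this in and using the conjugate exponent identity $1/p+1/q=1$ (so that $(F(z))^{1/q}\cdot(F(z))^{1/p}=F(z)$), I compute
\begin{equation*}
\frac{(F(z))^{1/q}}{f(z)\int_z^\infty (F(s))^{-1/p}\,ds}
\;\sim\;\frac{(\sigma+2-p)\,F(z)}{p\,z\,f(z)}
\;\sim\;\frac{\sigma+2-p}{p(\sigma+2)},
\end{equation*}
where in the last step I use the first asymptotic $F(z)\sim z f(z)/(\sigma+2)$. This yields the claimed limit.

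There is no real obstacle: the only thing to watch is that the role of the hypothesis $\sigma>p-2$ is to guarantee integrability at infinity of $(F(s))^{-1/p}$ and thus the applicability of Karamata's tail formula. Once both asymptotics are in place, the conjugate-exponent identity does the algebra.
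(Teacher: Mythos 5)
Your proof is correct, but it takes a genuinely different route from the paper's. The paper obtains both ingredients by L'H\^{o}pital's rule: first $\lim_{z\to\infty} F(z)/(zf(z))=1/(2+\sigma)$, using the normalized-regular-variation identity $zf'(z)/f(z)\to\sigma+1$ from~\eqref{eq:def:nrv}, and then --- after checking via~\eqref{eq:f:prop1} that $z(F(z))^{-1/p}\to 0$, so the quotient is of type $[0/0]$ --- a second application giving $\lim_{z\to\infty} z(F(z))^{-1/p}\big/\int_{z}^{\infty}(F(s))^{-1/p}\,ds=(\sigma+2-p)/p$; multiplying the two limits yields the claim. You instead apply Karamata's asymptotic integration theorem (Proposition~\ref{prop:assymp:reg:func}) twice: part (i) to $F=\int_0^{t}f$ with index $\sigma+1>-1$, and part (ii) to the convergent tail $\int_{z}^{\infty}(F(s))^{-1/p}\,ds$ with index $-(\sigma+2)/p<-1$, the convergence being exactly the content of $\sigma>p-2$. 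The two routes are morally the same computation, but yours has the advantage of using only that $f\in RV_{\sigma+1}$ (no differentiability and no normalization of $f$ enter), whereas the paper's first L'H\^{o}pital step implicitly relies on $f$ being \emph{normalized} regularly varying so that $zf'(z)/f(z)\to\sigma+1$. Two small points you should make explicit: the representation $f(s)=s^{\sigma+1}L(s)$ is only guaranteed for $s$ large, but since $t^{\sigma+2}L(t)\to\infty$ the contribution of a bounded initial segment to $F$ is asymptotically negligible, so the first asymptotic stands; and asymptotic equivalence of $F$ with an $RV_{\sigma+2}$ function does give $F\in RV_{\sigma+2}$ and hence $(F)^{-1/p}\in RV_{-(\sigma+2)/p}$, which is what part (ii) needs.
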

\begin{proof}
We have from~\eqref{assum:f2} that
\begin{align*}
F(z)=\int_{0}^{z}f(s)\, ds\stackrel{s=tz}{=}\int_{0}^{1}zf(tz)\, dt.
\end{align*}
Therefore, by the L'H\^{o}pitals rule we obtain
\begin{align}\label{eq:f:lim1}
\lim_{z\to\infty}\frac{F(z)}{zf(z)}\stackrel{\left[\frac{\infty}{\infty}\right]}{=}\lim_{z\to\infty}\frac{f(z)}{f(z)+zf'(z)}=\frac{1}{1+\lim\limits_{z\to\infty}zf'(z)/f(z)}\stackrel{\eqref{eq:def:nrv}}{=}\frac{1}{2+\sigma}.
\end{align}
We also note from~\eqref{eq:inv}
\begin{align*}
\int_{\phi(t)}^{\infty}\dfrac{ds}{(qF(s))^{1/p}}= t>0,
\end{align*}
that $\phi$ is a decreasing function such that $\lim\limits_{t\to 0+}\phi(t)=\infty$. Then the direct computation shows that
\begin{align*}
\phi'(t)=-\left(qF(\phi(t)) \right)^{1/p};\quad |\phi'(t)|^{p-2}\phi''(t)=\frac{q}{p}f(\phi(t));\quad\frac{-\phi'(t)}{\phi''(t)}=\frac{p}{q}\frac{(qF(\phi(t)))^{1/q}}{f(\phi(t))}.
\end{align*}
As a consequence of the limit
\begin{align}
\lim_{s\to\infty}\frac{(F(s))^{1/q}}{f(s)}=0,
\end{align}
we note that
\begin{align*}
\lim_{s\to\infty}\frac{\phi'(t)}{\phi''(t)}=0;
\end{align*}
and, by L'H\^{o}pitals rule, that
\begin{align}\label{eq:f:prop1}
\lim_{s\to\infty}\frac{(F(s))^{1/p}}{s}\stackrel{\left[\frac{\infty}{\infty}\right]}{=}\lim_{s\to\infty}\frac{f(s)}{(F(s))^{1/q}}=\infty.
\end{align}
Since $F$ satisfies~\eqref{assum:f2} and~\eqref{eq:f:prop1}, again by applying L'H\^{o}pitals rule we obtain
\begin{align}\label{eq:f:lim2}
\lim_{z\to\infty}\frac{z(F(z))^{-1/p}}{\int_{z}^{\infty}(F(s))^{-1/p}\, ds}\stackrel{\left[\frac{0}{0}\right]}{=}\lim_{z\to\infty}\left(\frac{1}{p}\cdot\frac{zf(z)}{F(z)}-1 \right)\stackrel{\eqref{eq:f:lim1}}{=}\frac{\sigma+2-p}{p}.
\end{align}
Hence the desired limit follows from~\eqref{eq:f:lim1} and~\eqref{eq:f:lim2}.
\end{proof}
On recalling that $\phi'(t)=-\left(qF(\phi(t)) \right)^{1/p}$, we have the following useful corollary.
\begin{corollary}\label{cor:aux:freg}
Let $q=p/(p-1),\ p>1$. If $f\in RV_{\sigma+1}\ (\sigma>p-2)$ is continuous, then
\begin{align}
\lim_{t\to 0+}\frac{\Phi_{p}(\phi'(t))}{tf(\phi(t))}=-\frac{q}{p}\frac{\sigma+2-p}{2+\sigma}.
\end{align}
\end{corollary}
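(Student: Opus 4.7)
The plan is to reduce the limit directly to Lemma~\ref{lem:aux:freg} by substituting the explicit formula $\phi'(t)=-\bigl(qF(\phi(t))\bigr)^{1/p}$ into $\Phi_p$ and using the defining integral identity \eqref{eq:inv} to re-express the factor $t$ in the denominator.

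First I would observe that since $\phi$ is strictly decreasing with $\phi(t)\to\infty$ as $t\to 0^+$, we have $\phi'(t)<0$, hence
\[
\Phi_p(\phi'(t)) \;=\; -|\phi'(t)|^{p-1} \;=\; -\bigl(qF(\phi(t))\bigr)^{(p-1)/p} \;=\; -q^{1/q}\,F(\phi(t))^{1/q},
\]
where I use the elementary identity $(p-1)/p = 1/q$. Next, the definition \eqref{eq:inv} of $\phi$ gives
\[
t \;=\; \int_{\phi(t)}^{\infty}\frac{ds}{(qF(s))^{1/p}} \;=\; q^{-1/p}\int_{\phi(t)}^{\infty} F(s)^{-1/p}\,ds.
\]

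Combining these two identities yields
\[
\frac{\Phi_p(\phi'(t))}{t\,f(\phi(t))} \;=\; -\,q^{1/q+1/p}\,\frac{F(\phi(t))^{1/q}}{f(\phi(t))\displaystyle\int_{\phi(t)}^{\infty}F(s)^{-1/p}\,ds} \;=\; -q\,\frac{F(\phi(t))^{1/q}}{f(\phi(t))\displaystyle\int_{\phi(t)}^{\infty}F(s)^{-1/p}\,ds},
\]
since $1/q+1/p=1$. Finally, letting $t\to 0^+$ and setting $z=\phi(t)\to\infty$, Lemma~\ref{lem:aux:freg} gives that the remaining quotient tends to $(\sigma+2-p)/\bigl(p(2+\sigma)\bigr)$, and multiplying by $-q$ produces the claimed limit $-\tfrac{q}{p}\cdot\tfrac{\sigma+2-p}{2+\sigma}$.

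There is essentially no obstacle here beyond careful bookkeeping of the exponents $1/p$, $1/q$, $(p-1)/p$: the whole corollary is a direct algebraic rewriting that converts the ratio $\Phi_p(\phi'(t))/(tf(\phi(t)))$ into the exact ratio whose limit was established in Lemma~\ref{lem:aux:freg}, with the change of variable $z=\phi(t)$ valid precisely because $\phi$ is a decreasing bijection from a right-neighborhood of $0$ onto a neighborhood of $\infty$.
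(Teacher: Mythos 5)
Your proposal is correct and is exactly the argument the paper intends: the corollary is stated immediately after Lemma~\ref{lem:aux:freg} with the remark ``on recalling that $\phi'(t)=-(qF(\phi(t)))^{1/p}$,'' i.e.\ one substitutes this identity into $\Phi_p$, rewrites $t$ via \eqref{eq:inv}, and applies the lemma with $z=\phi(t)$. Your exponent bookkeeping ($(p-1)/p=1/q$, $1/p+1/q=1$, hence the prefactor $-q$) checks out and reproduces the stated limit $-\tfrac{q}{p}\cdot\tfrac{\sigma+2-p}{2+\sigma}$.
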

We would like to note down the following lemma to be useful in Theorem~\ref{thm:main:gen}.
\begin{lemma}\label{lem:aux:def:plap}
For $z,d\in C^{2}(\Omega),\ \phi\in C^{2}(\mathbb{R})$ and $v(x)=\phi(z(x))$
\begin{align*}
\Delta_{d^{\alpha},p} v(x):={\rm div}((d(x))^{\alpha}\Phi_{p}(\nabla \phi(z(x))))={\rm div}((d(x))^{\alpha}\Phi_{p}(\phi'(z(x)))\Phi_{p}(\nabla z(x)))\\
=|\phi'(z(x))|^{p-2}\left[(p-1)(d(x))^{\alpha}\phi''(z(x))|\nabla z(x)|^{p} +\phi'(z(x)) \Delta_{d^{\alpha},p}z(x)\right].
\end{align*}
\end{lemma}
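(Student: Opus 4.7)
The statement is a chain-rule identity for the weighted $p$-Laplacian applied to a composition $v = \phi \circ z$, so the proof is entirely a calculation. The plan is to expand $\nabla v$, factor the $p$-harmonic expression using the multiplicativity of $\Phi_p$ under scalar multiplication, and then apply the Leibniz rule for the divergence.

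First I would compute $\nabla v(x) = \phi'(z(x))\,\nabla z(x)$ by the ordinary chain rule. The key algebraic observation is that $\Phi_p$, viewed as the map $y \mapsto |y|^{p-2}y$ on $\mathbb{R}^N$ (and the scalar version $s \mapsto |s|^{p-2}s$ on $\mathbb{R}$), is homogeneous of degree $p-1$ in the sense that for any scalar $a$ and vector $b$,
\[
\Phi_p(a\,b) = |a|^{p-2}|b|^{p-2}\,(a\,b) = \Phi_p(a)\,\Phi_p(b).
\]
Applying this with $a = \phi'(z(x))$ and $b = \nabla z(x)$ yields the first claimed equality
\[
\Phi_p(\nabla v) = \Phi_p(\phi'(z))\,\Phi_p(\nabla z),
\]
so that $\Delta_{d^\alpha,p}v = \operatorname{div}\!\bigl(d^\alpha\,\Phi_p(\phi'(z))\,\Phi_p(\nabla z)\bigr)$.

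Next I would use the scalar/vector product rule for divergence. Setting $g(x):=\Phi_p(\phi'(z(x)))$, write
\[
\operatorname{div}\!\bigl(g\,(d^\alpha\Phi_p(\nabla z))\bigr) = \nabla g \cdot (d^\alpha\,\Phi_p(\nabla z)) + g\,\operatorname{div}(d^\alpha\,\Phi_p(\nabla z)) = \nabla g \cdot (d^\alpha\,\Phi_p(\nabla z)) + g\,\Delta_{d^\alpha,p}z.
\]
The chain rule gives $\nabla g = (p-1)|\phi'(z)|^{p-2}\phi''(z)\,\nabla z$ (valid wherever $\phi'(z)\neq 0$, and trivially true when $\phi'(z)=0$ at points where the resulting expression is interpreted in the natural limiting sense for $p>1$). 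Plugging this in,
\[
\nabla g \cdot (d^\alpha\,\Phi_p(\nabla z)) = (p-1)|\phi'(z)|^{p-2}\phi''(z)\,d^\alpha\,\nabla z \cdot (|\nabla z|^{p-2}\nabla z) = (p-1)|\phi'(z)|^{p-2}\phi''(z)\,d^\alpha\,|\nabla z|^p,
\]
while $g\,\Delta_{d^\alpha,p}z = |\phi'(z)|^{p-2}\phi'(z)\,\Delta_{d^\alpha,p}z$. Factoring out $|\phi'(z)|^{p-2}$ delivers the claimed identity.

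The only subtle point is the differentiability of $\Phi_p$ at $0$, which fails smoothly when $1<p<2$; however, since the lemma only requires the formula pointwise under the regularity $z,d\in C^2$, $\phi\in C^2$, the identity can be verified at any point where $\phi'(z(x))\neq 0$ by the computation above, and extends to the closure by continuity of both sides. I do not expect any genuine obstacle: the entire argument is essentially a bookkeeping exercise in the chain and product rules together with the multiplicative property of $\Phi_p$.
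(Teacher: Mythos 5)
Your computation is correct and is exactly the chain-rule/product-rule calculation that the paper implicitly relies on (the lemma is stated there without proof). Your remark about the non-differentiability of $\Phi_p$ at the origin for $1<p<2$ is a reasonable caveat, and it is harmless in the paper's application since there $\phi'=-\left(qF(\phi)\right)^{1/p}$ never vanishes.
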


We now proof one of our main theorems.
\begin{proof}[Proof of Theorem~\ref{thm:main:gen}]
{\em Existence:}
For given $\delta>0$, we denote
\begin{align*}
\B_{\delta}:=\{x\in\Omega:\ 0<d(x)<\delta \}.
\end{align*}
Thus for a $C^{2}$-smooth bounded domain $\B$, there exists a constant $\mu>0$, depending on $\B$ such that
\begin{align}\label{eq:dist:pr}
d\in C^2(\overline{\B}_{\mu})\quad \text{and}\quad |\nabla d|\equiv 1\ \text{on}\ \B_{\mu}.
\end{align}
Let us define for $\rho\in(0,\mu/2)$,
\begin{align*}
\B^{-}_{\rho}:=\B_{\mu}\setminus \bar{\B}_{\rho},\quad \B^{+}_{\rho}:=\B_{\mu-\rho},\quad\text{and}\quad d^{\pm}(x):=d(x)\pm \rho
\end{align*}
{\sc Case I.} Let us consider the case when $k\in\mathcal{K}$ (see Section~\ref{sec:intro}), is {\em nondecreasing} on $(0,\nu)$ for some $\nu>0$. We can take, without loss of generality, that $\nu>\mu$.
\begin{align}\label{eq:subs:p}
z^{\pm}(x):=K(d^{\pm}(x)),\ x\in \B^{\pm}_{\rho};\quad
w^{\pm}:=\xi^{\pm}\phi(z^{\pm}(x)),\ x\in \B^{\pm}_{\rho},
\end{align}
where
\begin{align}\label{eq:lim:const}
\xi^{+}:=\left[\dfrac{ p+l_1(1-\alpha)(2+\sigma-p)}{(b_2+ 2\varepsilon)(2+\sigma)} \right]^{1/(2+\sigma -p)},\quad
\xi^{-}:=\left[\dfrac{ p+l_1(1-\alpha)(2+\sigma-p)}{(b_1- 2\varepsilon)(2+\sigma)} \right]^{1/(2+\sigma -p)},
\end{align}
for $\varepsilon\in (0,\min\{1/2,b_1/2\})$.

It follows from~\eqref{eq:dist:pr} and~\ref{assum:B2} that, for such $\varepsilon>0$ considered in~\eqref{eq:lim:const}, there exists $\delta_{\varepsilon}\in(0,\mu/2)$ such that for $\rho\in(0,\delta_{\varepsilon})$,
\begin{align}
(b_1-\varepsilon)d^{\alpha-\frac{\alpha p}{2}}k^{p}(d(x)-\rho)\le (b_1-\varepsilon)d^{\alpha-\frac{\alpha p}{2}}k^{p}(d(x))<b(x),\quad x\in \B^{-}_{\rho};\label{eq:b:lim1}\\
b(x)<(b_2+\varepsilon)d^{\alpha-\frac{\alpha p}{2}}k^{p}(d(x))\le (b_2-\varepsilon)d^{\alpha-\frac{\alpha p}{2}}k^{p}(d(x)+\rho),\quad x\in \B^{+}_{\rho}.\label{eq:b:lim2}
\end{align}
We also have from~\eqref{eq:subs:p} and~\eqref{eq:cls:k} and the successive computation that
\begin{align*}
|\nabla z^{\pm}|^{p}=|K'(d)|^{p}|\nabla d|^{p}=d^{-\alpha p/2}k^{p}(d)|\nabla d|^{p}.
\end{align*}
Then Lemma~\ref{lem:aux:def:plap}, applying for for $v^{\pm}:=\phi(z^{\pm})$ and $z^{\pm}=K(d^{\pm})$ successively, shows that
\begin{align*}
\Delta_{d^{\alpha},p}v^{\pm} 
={\rm div}(d^{\alpha}\Phi_{p}(\phi'(z^{\pm}))\Phi_{p}(\nabla z^{\pm}))~~~~~~~~~~~~~~~~~~~~~~~~~~~~~~~~~~~~~~~~~~~~~~~~~~~~~~~~~~\\
\stackrel{{\rm Lemma~\ref{lem:aux:def:plap}\ for}\ v^{\pm}(x)}{=}(p-1)d^{\alpha}|\phi'(z^{\pm})|^{p-2}\phi''(z^{\pm})|\nabla z^{\pm}|^{p}+\Phi_{p}(\phi'(z^{\pm}))\Delta_{d^{\alpha},p}z^{\pm}\\
\stackrel{{\rm Lemma~\ref{lem:aux:def:plap}\ for}\ z^{\pm}(x)}{=}(p-1)d^{\alpha}|K'(d^{\pm})|^{p}|\phi'(K(d^{\pm}))|^{p-2}\phi''(K(d^{\pm}))|\nabla d|^{p}~~~~~~~~~~~~~~~~~~~~\\
+\Phi_{p}(\phi'(z^{\pm}))\left[(p-1)d^{\alpha}|K'(d^{\pm})|^{p-2}K''(d^{\pm})|\nabla d|^{p}+\Phi_{p}(K'(d^{\pm}))\Delta_{d^{\alpha},p}d^{\pm} \right]\\
=d^{\alpha}|K'(d^{\pm})|^{p}\left[(p-1)|\phi'(K(d^{\pm})))|^{p-2}\phi''(K(d^{\pm}))|\nabla d|^{p} 
+\Phi_{p}(\phi'(K(d^{\pm})))\frac{\Delta_{p}d^{\pm}}{K'(d^{\pm})} \right.\\
\left. +\Phi_{p}(\phi'(K(d^{\pm})))\left((p-1)\frac{K''(d^{\pm})}{(K'(d^{\pm}))^2} + \frac{\alpha}{dK'(d)}\right)|\nabla d|^{p} \right].
\end{align*}
Then, together with $p-1=p/q$, we obtain
 \begin{align}\label{eq:subs:main}
 b(x)f(w^{\pm})-\Delta_{d^{\alpha},p}w^{\pm}=b(x)f(w^{\pm})-(\xi^{\pm})^{p-1}\Delta_{d^{\alpha},p}v^{\pm}~~~~~~~~~~~~~~~~~~~~~~~~~~~~~~~~~~~~~~~~~~~~~~~\nonumber\\
  =(\xi^{\pm})^{p-1}k^{p}(d)f(\phi(z^{\pm}))d^{\alpha-\frac{\alpha p}{2}}\left[\frac{b(x)f(\xi^{\pm}\phi(z^{\pm}))}{(\xi^{\pm})^{p-1}d^{\alpha}|K'(d^{\pm})|^{p}f(\phi(z^{\pm}))}
 -\left( \mathcal{D}_{1}^{\pm}
+\mathcal{D}_{2}^{\pm}
+\mathcal{D}_{3}^{\pm}
  \right)\right],
 \end{align}
 where
 \begin{align*}
 \mathcal{D}_{1}^{\pm}(x)&:= \frac{\Phi_{p}(\phi'(z^{\pm}(x)))}{K'(d^{\pm}(x))f(\phi(z^{\pm}(x)))}\Delta_{p}d(x),\\
 \mathcal{D}_{2}^{\pm}(x)&:=\frac{p}{q}
\left\{ 
\frac{K''(d^{\pm})}{(K'(d^{\pm}(x)))^2}\frac{\Phi_{p}(\phi'(z^{\pm}(x)))}{f(\phi(z^{\pm}(x)))}+\frac{|\phi'(z^{\pm}(x))|^{p-2}\phi''(z^{\pm}(x))}{f(\phi(z^{\pm}(x)))}
 \right\}
 |\nabla d(x)|^{p},\\
 \mathcal{D}_{3}^{\pm}(x)&:=  \alpha\frac{\Phi_{p}(\phi'(z^{\pm}(x)))}{d^{\pm}(x)K'(d^{\pm}(x))f(\phi(z^{\pm}(x)))}|\nabla d(x)|^{p}.
 \end{align*}
 Recalling from~\eqref{eq:subs:p} that $z^{\pm}(x)=K(d^{\pm}(x))$, and we obtain 
\begin{align*}
|\mathcal{D}_{1}^{\pm}(x)|&=\left| \frac{\Phi_{p}(\phi'(z^{\pm}(x)))}{K'(d^{\pm}(x))f(\phi(z^{\pm}(x)))}\Delta_{p}d(x)\right|
= \frac{K(d^{\pm}(x))}{K'(d^{\pm}(x))}\left| \frac{\Phi_{p}(\phi'(z^{\pm}(x)))}{z^{\pm}(x)f(\phi(z^{\pm}(x)))}\right||\Delta_{p}d(x)|.
\end{align*}
Since $d\in C^{2}(\overline{\B}_{\mu})$ and $k\in\mathcal{K}$, we eventually have by Corollary~\ref{cor:aux:freg} and Remark~\ref{rem:k:lim},
\begin{align*}
\lim_{(d(x),\rho)\to (0+,0+)}\mathcal{D}_{1}^{\pm}(x)=0.
\end{align*}
Since  $\nabla d\equiv 1$ on $\B_{\mu}$, and from~\eqref{eq:inv} that $|\phi'(z^{\pm}(x))|^{p-2}\phi''(z^{\pm}(x))=(q/p)f(\phi(z^{\pm}(x)))$, we have
\begin{align*}
\mathcal{D}_{2}^{\pm}(x)
=\frac{p}{q}\frac{K''(d^{\pm})K(d(x))}{(K'(d^{\pm}(x)))^2}\cdot\frac{\Phi_{p}(\phi'(z^{\pm}(x)))}{K(d(x))f(\phi(z^{\pm}(x)))}+1.
\end{align*}
Since we have  $z^{-}\le K(d)$ and $z^{+}\ge K(d)$. Therefore,
\begin{align*}
\mathcal{D}_{2}^{-}(x)\le \frac{p}{q}\frac{K''(d^{\pm}(x))K(d(x))}{(K'(d^{\pm}(x)))^2}\frac{\Phi_{p}(\phi'(z^{-}(x)))}{z^{-}(x)f(\phi(z^{-}(x)))}+1=:\tilde{\mathcal{D}}_{2}^{-}(x);\\
\mathcal{D}_{2}^{+}(x)\ge \frac{p}{q}\frac{K''(d^{\pm}(x))K(d(x))}{(K'(d^{\pm}(x)))^2}\frac{\Phi_{p}(\phi'(z^{+}(x)))}{z^{+}(x)f(\phi(z^{+}(x)))}+1=:\tilde{\mathcal{D}}_{2}^{+}(x).
\end{align*}
Furthermore, we have
\begin{align*}
\mathcal{D}_{3}^{\pm}(x)&
=  \alpha\frac{K(d(x))/K'(d^{\pm}(x))}{d(x)}\frac{\Phi_{p}(\phi'(z^{\pm}(x)))}{K(d(x))f(\phi(z^{\pm}(x)))}\\
&\gtreqless \alpha\frac{K(d(x))/K'(d^{\pm}(x))}{d(x)}\frac{\Phi_{p}(\phi'(z^{\pm}(x)))}{z^{\pm}(x)f(\phi(z^{\pm}(x)))}=:\tilde{\mathcal{D}}_{3}^{\pm}(x).
\end{align*}

Therefore, for $x\in\B_{\rho}^{\pm}$ along with~\eqref{eq:subs:main},~\eqref{eq:b:lim1} and~\eqref{eq:b:lim2}, we conclude that
\begin{align}
 b(x)f(w^{-})-\Delta_{d^{\alpha},p}w^{-}~~~~~~~~~~~~~~~~~~~~~~~~~~~~~~~~~~~~~~~~~~~~~~~~~~~~~~~~~~~~~~~~~~~~~~~~~~~~~~~~~~~~~~~\nonumber\\
 \ge (\xi^{-})^{p-1}k^{p}(d)f(\phi(z^{-}))d^{\alpha-\frac{\alpha p}{2}}\left\{\frac{(b_1-\varepsilon)f(\xi^{-}\phi(z^{-}))}{(\xi^{-})^{p-1}f(\phi(z^{-}))}
 -\left( \mathcal{D}_{1}^{-}(x)
+\tilde{\mathcal{D}}_{2}^{-}(x)
+\mathcal{D}_{3}^{-}(x)
  \right)\right\};\label{eq:super:constr}\\
   b(x)f(w^{+})-\Delta_{d^{\alpha},p}w^{+}~~~~~~~~~~~~~~~~~~~~~~~~~~~~~~~~~~~~~~~~~~~~~~~~~~~~~~~~~~~~~~~~~~~~~~~~~~~~~~~~~~~~~~~\nonumber\\
 \le (\xi^{+})^{p-1}k^{p}(d)f(\phi(z^{+}))d^{\alpha-\frac{\alpha p}{2}}\left\{\frac{(b_2+\varepsilon)f(\xi^{+}\phi(z^{+}))}{(\xi^{+})^{p-1}f(\phi(z^{+}))}
 -\left( \mathcal{D}_{1}^{+}(x)
+\tilde{\mathcal{D}}_{2}^{+}(x)
+\mathcal{D}_{3}^{+}(x)
  \right)\right\}\label{eq:sub:constr}
\end{align}
Note that by Corollary~\ref{cor:aux:freg} and Remark~\ref{rem:k:lim}, we have
\begin{align*}
\lim_{(d(x),\rho)\to (0+,0+)} \tilde{\mathcal{D}}_{2}^{\pm}(x)=\frac{p+l_1(2+\sigma - p)}{2+\sigma};\\
\lim_{(d(x),\rho)\to (0+,0+)} \tilde{\mathcal{D}}_{3}^{\pm}(x)=-\alpha\frac{l_1(2+\sigma - p)}{2+\sigma}.
\end{align*}
Thus as $(d(x),\rho)\to (0+,0+)$, the expression in the first bracket $(\cdot)$ in~\eqref{eq:super:constr}-\eqref{eq:sub:constr} converges to
\begin{align*}
\frac{p+l_1(1-\alpha)(2+\sigma - p)}{2+\sigma}
\end{align*}
Therefore, as $(d(x),\rho)\to (0+,0+)$, the expression in $\{\cdot\}$ in~\eqref{eq:super:constr}-\eqref{eq:sub:constr} converges, respectively, to
\begin{align*}
\left(\frac{b_1-\varepsilon}{b_1 - 2\varepsilon} -1\right)\frac{p+l_1(1-\alpha)(2+\sigma - p)}{2+\sigma}> 0,\\
\text{and}\qquad\left(\frac{b_2+\varepsilon}{b_2 + 2\varepsilon} -1\right)\frac{p+l_1(1-\alpha)(2+\sigma - p)}{2+\sigma}< 0.
\end{align*}
Therefore, using the respective continuity (limits) of the absorption coefficient, we obtain
\begin{align}\label{eq:sub:sup:constr}
b(x)f(w^{\pm})-\Delta_{d^{\alpha},p}w^{\pm}\lessgtr 0,\ \text{respectively on}\ \B^{\pm}_{\rho},
\end{align}
where $\rho\in(0,\mu/2)$  for sufficiently small $\mu>0$.


Now suppose that $u$ is a nonnegative solution of~\eqref{eq:1}-\eqref{eq:2}. First we note that for $M$ sufficiently large
\begin{align*}
u\le w^{-}+M,\quad\text{on}\ \partial\B_{2\delta_{\varepsilon}},\ \text{and}\quad 
w^{+}\le u+M,\quad\text{on}\ \partial\B_{2\delta_{\varepsilon}-\rho}.
\end{align*}
We observe that $w^{-}(x)\to\infty$ as $d^{-}(x)\to \rho$, and $u_{|_{\partial\B}}=+\infty>w^{+}(x)_{|_{\partial\B}}$. It follows from comparison principle in Theorem~\ref{thm:comp:1}, that
\begin{align*}
u\le w^{-}+M,\quad\text{on}\ \B_{\rho}^{-},\quad \text{and}\ 
w^{+}\le u+M,\quad\text{on}\ \B_{\rho}^{+}.
\end{align*}
Hence, for $x\in\B^{-}_{\rho}\cap\B^{+}_{\rho}$, we have
\begin{align*}
\xi^{+}-\frac{M}{\phi(K(d^{\pm}(x)))}\le \frac{u(x)}{\phi(K(d^{\pm}(x)))}\le \xi^{-}+\frac{M}{\phi(K(d^{\pm}(x)))}
\end{align*}
On letting $\rho\to 0$, we see that
\begin{align*}
\xi^{+}-\frac{M}{\phi(K(d(x)))}\le \frac{u(x)}{\phi(K(d(x)))}\le \xi^{-}+\frac{M}{\phi(K(d(x)))}.
\end{align*}
On recalling that $\phi(t)\to \infty$ as $t\to 0$, we have
\begin{align}\label{eq:growth2}
\xi^{+}\le \liminf_{d(x)\to 0}\frac{u(x)}{\phi(K(d(x)))}
\le \limsup_{d(x)\to 0}\frac{u(x)}{\phi(K(d(x)))}
\le \xi^{-}.
\end{align}
The claimed result in~\eqref{eq:growth1} follows as $\varepsilon\to 0$ in~\eqref{eq:growth2}, whence for $b_1=b_2=c$,~\eqref{eq:growth} follows as well.

\noindent
{\sc Case II.} Let us consider the case when $k\in\mathcal{K}$ (see Section~\ref{sec:intro}), is {\em nonincreasing} on $(0,\nu)$ for some $\nu>0$.
Let
\begin{align*}
y^{\pm}(x):=K(d(x))\pm K(\rho),\quad x\in \B^{\pm}_{\rho}.
\end{align*}
Given $0<\varepsilon<c/2$, it follows from~\eqref{eq:dist:pr} and~\eqref{assum:b1} that there exists $\delta_{\varepsilon}\in(0,\mu/2)$,
\begin{align}
(c-\varepsilon)d^{\alpha-\frac{\alpha p}{2}}k^{p}(d(x))<b(x)<(c+\varepsilon)d^{\alpha-\frac{\alpha p}{2}}k^{p}(d(x)),\quad x\in \B^{+}_{\rho}.\label{eq:b:lim3}
\end{align}
In a similar way, as of {\sc Case I}, we can show~\eqref{eq:sub:sup:constr} hold for $w^{\pm}:=\xi^{\pm}\phi(y^{\pm}(x))$.
\end{proof}
We proof now uniqueness Theorem~\ref{thm:unique} under the additional assumption~\eqref{eq:cond:f:unique}.
\begin{proof}[Proof of Theorem~\ref{thm:unique}]
The uniqueness follows from Theorem~\ref{thm:main:gen} by a standard argument using the blow-up rate on the boundary. Indeed, suppose $u_1$ and $u_2$ are two solutions of~\eqref{eq:1} in $\B$. Then by Theorem~\ref{thm:main:gen}, it follows that
\begin{align*}
\lim_{d(x)\to 0}\frac{u_1(x)}{u_2(x)}=1.
\end{align*}
Thus by the definition of limit, for every $\varepsilon>0$, we can find a $\delta>0$ (as small as we please) such that
\begin{align*}
(1-\varepsilon)u_2(x)\le u_1(x)\le (1+\varepsilon)u_2(x),\quad\in\B_{\delta}.
\end{align*}
Since $f$ satisfies~\eqref{eq:cond:f:unique}, we can see that $U_{\pm}:=(1\pm\varepsilon)u_2(x),\ x\in\B$, satisfy
\begin{align*}
{\rm div}(d^{\alpha}\Phi_{p}(\nabla U_{+})) \le b(x)f(U_{+})\quad\text{and}\quad {\rm div}(d^{\alpha}\Phi_{p}(\nabla U_{-})) \ge b(x)f(U_{-})\qquad \text{in}\ \B.
\end{align*}
Now, consider the following problem
\begin{align}\label{eq:14}
{\rm div}(d^{\alpha}\Phi_{p}(\nabla w)) = b(x)f(w),\ \text{in}\ \B_{0},\quad w=u\ \text{on}\ \partial\B_{0},
\end{align}
where $\B_{0}:=\B\setminus\B_{\delta}$. Thanks to Theorem~\ref{thm:1eaux},~\eqref{eq:14} possesses a unique solution, necessarily $u$.  Then by comparison principle by Theorem~\ref{thm:comp:1}, it follows that
\begin{align*}
U_{-}(x)\le u(x)\le U_{+}(x),\quad x\in\B_{0}.
\end{align*}
It is evident that $u=u_1$ on $\B_{0}$, thereby we have
\begin{align*}
(1-\varepsilon)u_2(x)\le u_1(x)\le (1+\varepsilon)u_2(x),\quad\in\B_{\delta}\cup\B_{0}.
\end{align*}
Letting $\varepsilon\to 0$, we arrive at $u_1 =u_2$.
\end{proof}

\section{Second order blow-up behavior}\label{sec:2nd:blw}
Here we give a second order estimates for large solutions of semilinear problem, for $p=2$ in Theorem~\ref{thm:p:2} and~\ref{thm:p:2a} respectively for the growths like $(-\ln (d(x)))^{\tau},\ \tau>0$ and $(d(x))^{\omega}, \omega>0$. 
We shall collect some auxiliary results to be useful in the subseuqent Lemmas and Theorems.
\begin{lemma}\label{lem:k:prop}
Let $k\in\mathcal{K}_{l_1}$. Then
\begin{enumerate}
\item[\namedlabel{itm:k:prop1}{(i)}] $\lim\limits_{t\to 0+}\frac{K(t)}{t^{-\alpha/2}k(t)}=0$, $\lim\limits_{t\to 0+}\frac{K(t)}{t^{1-\alpha/2}k(t)}=l_1$, i.e., $K\in NRVZ_{l_1^{-1}}$.
\item[\namedlabel{itm:k:prop2}{(ii)}] $\lim\limits_{t\to 0+}\frac{t(t^{-\alpha/2}k(t))'}{t^{-\alpha/2}k(t)}=l_1^{-1}-1$, i.e., $k\in NRVZ_{l_1^{-1}-1+\alpha/2}$;
\item[\namedlabel{itm:k:prop3}{(iii)}] $\lim\limits_{t\to 0+}\frac{1}{y(t)}\left(\frac{K(t)(t^{-\alpha/2}k(t))'}{t^{-\alpha}k^2(t)}-(1-l_1)\right)=-e_k$
\end{enumerate}
\end{lemma}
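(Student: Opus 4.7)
The whole lemma rests on one elementary computation that is implicit already in Remark~\ref{rem:k:lim}, so my plan is to isolate that identity first and then grind through each of the three conclusions.

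Since $K'(t)=t^{-\alpha/2}k(t)$, the quotient rule gives
\begin{equation*}
\left(\frac{K(t)}{t^{-\alpha/2}k(t)}\right)^{(1)}
= 1 - \frac{K(t)\,(t^{-\alpha/2}k(t))'}{t^{-\alpha}k^{2}(t)}. \tag{$\ast$}
\end{equation*}
This immediately explains the second formula in Remark~\ref{rem:k:lim}: the left side tends to $l_1$ by definition of the class $\mathcal{K}$, whence the right-hand ratio tends to $1-l_1$. I will use $(\ast)$ throughout.

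\textbf{Part \ref{itm:k:prop1}.} The first limit is $l_0=0$ (the already recorded statement of Remark~\ref{rem:k:lim}); one way to see it directly is L'H\^opital on $K(t)/(t^{-\alpha/2}k(t))$ at $t=0$, whose derivative tends to $l_1$, forcing the ratio to vanish. For the second limit I would write
\begin{equation*}
\frac{K(t)}{t^{1-\alpha/2}k(t)}=\frac{K(t)/(t^{-\alpha/2}k(t))}{t},
\end{equation*}
a $0/0$ form; L'H\^opital together with the definition of $l_1$ in \eqref{eq:cls:k} gives the limit $l_1$. To upgrade this to $K\in NRVZ_{l_1^{-1}}$, I set $F(t):=K(1/t)$, compute $tF'(t)/F(t)=-s\cdot s^{-\alpha/2}k(s)/K(s)$ with $s=1/t$, and use the limit just proved to conclude $tF'(t)/F(t)\to -1/l_1$, which is the NRV-characterization \eqref{eq:def:nrv} at infinity.

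\textbf{Part \ref{itm:k:prop2}.} Factor the left-hand side of Remark~\ref{rem:k:lim} as
\begin{equation*}
\frac{K(t)\,(t^{-\alpha/2}k(t))'}{t^{-\alpha}k^{2}(t)}
= \frac{K(t)}{t\cdot t^{-\alpha/2}k(t)}\cdot \frac{t\,(t^{-\alpha/2}k(t))'}{t^{-\alpha/2}k(t)}.
\end{equation*}
The first factor tends to $l_1$ by \ref{itm:k:prop1}, and the whole quantity tends to $1-l_1$ by $(\ast)$, so the second factor tends to $(1-l_1)/l_1=l_1^{-1}-1$. To pass to the NRVZ-conclusion I set $h(t):=t^{-\alpha/2}k(t)$, observe $\lim_{t\to 0+}th'(t)/h(t)=l_1^{-1}-1$ (hence $h\in NRVZ_{l_1^{-1}-1}$ by the same substitution argument as in (i)), and then use $k(t)=t^{\alpha/2}h(t)$ to add $\alpha/2$ to the index, giving $k\in NRVZ_{l_1^{-1}-1+\alpha/2}$.

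\textbf{Part \ref{itm:k:prop3}.} This is essentially $(\ast)$ divided by $y(t)$. Rearranging,
\begin{equation*}
\frac{1}{y(t)}\!\left(\frac{K(t)(t^{-\alpha/2}k(t))'}{t^{-\alpha}k^{2}(t)}-(1-l_{1})\right)
= -\frac{1}{y(t)}\!\left(\left(\frac{K(t)}{t^{-\alpha/2}k(t)}\right)^{(1)}-l_{1}\right),
\end{equation*}
and the right-hand side tends to $-e_k$ by the defining limit of the class $\mathcal{K}_{[0,1],y}$ given just before the statement.

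The only step that needs any real care is passing from a pointwise derivative limit to the NRVZ index conclusions in (i)--(ii); I expect the temptation to juggle $t\leftrightarrow 1/t$ to be the main source of sign/index errors, so I will do that bookkeeping once and reuse it in both parts. Everything else is algebra on top of $(\ast)$ and the limits already packaged in Remark~\ref{rem:k:lim}.
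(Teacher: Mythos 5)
Your proof is correct and follows essentially the same route as the paper: L'H\^opital for the limits in (i), the factorization $\frac{K(t)(t^{-\alpha/2}k(t))'}{t^{-\alpha}k^{2}(t)}=\frac{K(t)}{t\,t^{-\alpha/2}k(t)}\cdot\frac{t(t^{-\alpha/2}k(t))'}{t^{-\alpha/2}k(t)}$ combined with (i) and Remark~\ref{rem:k:lim} for (ii), and the quotient-rule identity $\bigl(K/K'\bigr)'=1-KK''/(K')^{2}$ together with the defining limit of $\mathcal{K}_{[0,1],y}$ for (iii). Your explicit $t\leftrightarrow 1/t$ bookkeeping for the $NRVZ$ indices is a welcome addition the paper leaves implicit; the only slight imprecision is that in (i) the vanishing of $K/K'$ at $0+$ really comes from $K(0+)=0$ (as recorded in Remark~\ref{rem:k:lim}) rather than from the derivative tending to $l_1$.
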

\begin{proof}
By definition of the class $\mathcal{K}$, Remark~\ref{rem:k:lim} and applying of L'H\"o{}spital rule, we obtain\\
\ref{itm:k:prop1}: $\lim\limits_{t\to 0+}\frac{K(t)}{t^{-\alpha/2}k(t)}=\lim\limits_{t\to 0+}\frac{K(t)}{K'(t)}=0$ and  $\lim\limits_{t\to 0+}\frac{K(t)}{t^{1-\alpha/2}k(t)}=\lim\limits_{t\to 0+}\dfrac{\frac{K(t)}{K'(t)}}{t}\stackrel{\left[\frac{0}{0}\right]}{=}\lim\limits_{t\to 0+}\left(\frac{K(t)}{t^{-\alpha/2}k(t)}\right)^{(1)}=l_1$.\\
\ref{itm:k:prop2}-\ref{itm:k:prop3}: We note that $\left(\frac{K(t)}{t^{1-\alpha/2}k(t)}\right)^{(1)}=1-\frac{K(t)K''(t)}{(K'(t))^2}$ and $\frac{t(t^{-\alpha/2}k(t))'}{t^{-\alpha/2}k(t)}=\frac{tK''(t)}{K'(t)}$. Then\\
$\lim\limits_{t\to 0+}\frac{t(t^{-\alpha/2}k(t))'}{t^{-\alpha/2}k(t)}=\lim\limits_{t\to 0+}\frac{K(t)K''(t)}{(K'(t))^2}\lim\limits_{t\to 0+}\frac{tK'(t)}{K(t)}\stackrel{\eqref{eq:cls:k},\ref{itm:k:prop1}}{=}\frac{1-l_1}{l_1}$. This implies that $K'\in NRVZ_{l_1^{-1}-1}$, and hence $k\in NRVZ_{l_1^{-1}-1+\alpha/2}$. Furthermore,
\begin{align*}
\lim\limits_{t\to 0+} \frac{1}{y(t)}\left(\frac{K(t)(t^{-\alpha/2}k(t))'}{t^{-\alpha}k^2(t)}-(1-l_1)\right)=-\lim\limits_{t\to 0+} \frac{\left(\frac{K(t)}{K'(t)}\right)'-l_1}{y(t)}=-e_k.
\end{align*}
\end{proof}
Note that
\begin{align}\label{eq:phi}
\int_{\phi(t)}^{\infty}\frac{d\tau}{\sqrt{2F(\tau)}}=t,\ \forall\ t>0,
\end{align}
then
\begin{align}\label{eq:phi:1a}
\int_{t}^{\infty}\frac{d\tau}{\sqrt{2F(\tau)}}=\phi^{-1}(t),\ \forall\ t>0;\quad\text{and}\quad (\phi^{-1}(t))'=-\frac{1}{\sqrt{2F(t)}},\ \forall\ t>0.
\end{align}
\begin{description}
\item[\namedlabel{itm:y}{(Y)}]
$y(t)\in C([0,\nu))$ is a nondecreasing function such that $y(0)=0$ and $t/y(t)\rightarrow 0$ as $t\to 0+$
\end{description}
\begin{lemma}
If $f\in RV_{\sigma +1},\ \sigma>p-2$ and it satisfies the assumptions as in~\ref{itm:f1},
 then
\begin{enumerate}
\item[\namedlabel{itm:f:prop1}{(i)}] $\lim\limits_{t\to\infty}\frac{tf(t)}{F(t)}=\sigma +2$;
\item[\namedlabel{itm:f:prop2}{(ii)}] $\phi^{-1}\in NRV_{-\sigma/2},\ i.e.,\ \lim\limits_{t\to\infty}\frac{t(\phi^{-1}(t))'}{\phi^{-1}(t)}=\lim\limits_{t\to\infty}\frac{t}{\phi^{-1}(t)\sqrt{2F(t)}}=-\frac{\sigma}{2}$;
\item[\namedlabel{itm:f:prop3}{(iii)}] $\lim\limits_{t\to\infty}\frac{\sqrt{2F(t)}}{f(t)\phi^{-1}(t)}=\frac{\sigma}{\sigma +2}$.
\end{enumerate}
\end{lemma}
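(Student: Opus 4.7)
My plan is to treat the three claims in order, since each one is short and each builds on the previous. All three are direct consequences of Karamata theory applied to $f\in RV_{\sigma+1}$, together with the two explicit identities for $\phi$ recorded in \eqref{eq:phi} and \eqref{eq:phi:1a}.

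For \ref{itm:f:prop1}, the plan is to apply L'H\^opital's rule to $tf(t)/F(t)$ in the indeterminate form $[\infty/\infty]$, exactly as in the opening computation of Lemma~\ref{lem:aux:freg}. This gives
\[
\lim_{t\to\infty}\frac{tf(t)}{F(t)} = \lim_{t\to\infty}\frac{f(t)+tf'(t)}{f(t)} = 1+\lim_{t\to\infty}\frac{tf'(t)}{f(t)} = 1+(\sigma+1) = \sigma+2,
\]
where in the last step I invoke \eqref{eq:def:nrv} applied to $f\in NRV_{\sigma+1}$ (note $f$ being $C^1$ and regularly varying with a monotonicity hypothesis forces normalization). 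Alternatively one can quote Proposition~\ref{prop:assymp:reg:func}(i) directly, since $F(t)\sim t^{\sigma+2}L(t)/(\sigma+2)$ and $f(t)=t^{\sigma+1}L(t)$ with $L$ slowly varying.

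For \ref{itm:f:prop2}, I would start from the identity $(\phi^{-1}(t))'=-1/\sqrt{2F(t)}$ in \eqref{eq:phi:1a} so that the claim reduces to computing $\lim_{t\to\infty} t/(\phi^{-1}(t)\sqrt{2F(t)})$. Since \ref{itm:f:prop1} implies $F\in RV_{\sigma+2}$, the integrand $1/\sqrt{2F(\tau)}$ lies in $RV_{-(\sigma+2)/2}$, whose index is strictly less than $-1$ thanks to the hypothesis $\sigma>p-2=0$ of Theorem~\ref{thm:p:2}. Thus Proposition~\ref{prop:assymp:reg:func}(ii) applies to $\phi^{-1}(t)=\int_t^\infty d\tau/\sqrt{2F(\tau)}$ and yields
\[
\phi^{-1}(t)\;\sim\;\frac{2\,t}{\sigma\,\sqrt{2F(t)}}\qquad\text{as}\ t\to\infty,
\]
from which $\lim t/(\phi^{-1}(t)\sqrt{2F(t)})=\sigma/2$ follows. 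The equivalent $NRV$-statement is then obtained by dividing by $\phi^{-1}(t)$ and using $(\phi^{-1})'=-1/\sqrt{2F}$.

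For \ref{itm:f:prop3}, I would just combine the two previous items algebraically: using the asymptotic equivalence $\phi^{-1}(t)\sim 2t/(\sigma\sqrt{2F(t)})$ from \ref{itm:f:prop2} one obtains
\[
\frac{\sqrt{2F(t)}}{f(t)\phi^{-1}(t)}\;\sim\;\frac{\sigma\,(2F(t))}{2\,t\,f(t)}\;=\;\frac{\sigma}{\sigma+2}\cdot\frac{(\sigma+2)F(t)}{tf(t)}\;\longrightarrow\;\frac{\sigma}{\sigma+2},
\]
by \ref{itm:f:prop1}. No further estimate is needed. The only genuinely delicate point in the whole argument is the verification that the regular-variation index for $1/\sqrt{2F}$ really is less than $-1$ so that Karamata's tail asymptotic in Proposition~\ref{prop:assymp:reg:func}(ii) is applicable; this is exactly the role of the hypothesis $\sigma>p-2$ in the semilinear setting $p=2$, and it is what makes $\phi^{-1}$ finite and well defined in the first place.
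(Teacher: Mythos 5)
Your proposal is correct and follows essentially the same route as the paper: the paper's (very terse) proof simply records the Karamata asymptotics $F(t)\sim t^{\sigma+2}\hat{L}(t)/(\sigma+2)$ and $\phi^{-1}(t)\sim\bigl(\sigma^{2}t^{\sigma}\hat{L}(t)/(2(\sigma+2))\bigr)^{-1/2}$ obtained from Proposition~\ref{prop:assymp:reg:func}, which is exactly the computation you carry out in detail (including the key check that the index $-(\sigma+2)/2<-1$ so that the tail-integral asymptotic applies). No gaps; your version is just a fleshed-out form of the paper's argument.
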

\begin{proof}
By Proposition~\ref{prop:assymp:reg:func}, we have for $t\to\infty$,
\begin{align*}
F(t)=\frac{t^{\sigma+2}}{\sigma+2}\hat{L}(t);\quad (2F(t))^{-1/2}=\left(\frac{2t^{\sigma+2}}{\sigma+2}\hat{L}(t)\right)^{-1/2};\quad\phi^{-1}(t)=\left(\frac{\sigma^{2}t^{\sigma}}{2(\sigma+2)}\hat{L}(t)\right)^{-1/2}.
\end{align*}
\end{proof}

\begin{lemma}\label{lem:f1}
If $f\in RV_{\sigma +1},\ \sigma>p-2$ and it satisfies the assumptions as in~\ref{itm:f1},
 then
\begin{enumerate}
\item[\namedlabel{itm:f1:prop1}{(i)}] $\lim\limits_{t\to\infty}\frac{\frac{tf'(t)}{f(t)}-(\sigma+1)}{\phi^{-1}(t)}=0$;
\item[\namedlabel{itm:f1:prop2}{(ii)}] $\lim\limits_{t\to\infty}\frac{\frac{F(t)}{tf(t)}-\frac{1}{\sigma+2}}{\phi^{-1}(t)}=0$;
\item[\namedlabel{itm:f1:prop3}{(iii)}] $\lim\limits_{t\to\infty}\frac{\frac{\sqrt{2F(t)}}{f(t)\phi^{-1}(t)}-\frac{\sigma +1}{\sigma+2}}{\phi^{-1}(t)}=0$;
\item[\namedlabel{itm:f1:prop4}{(iv)}] $\lim\limits_{t\to\infty}\frac{\frac{f(\xi_{0}t)}{\xi_{0}f(t)}-\xi_{0}^{\sigma+1}}{\phi^{-1}(t)}=0,\  \xi_{0}>0$.
\end{enumerate}
\end{lemma}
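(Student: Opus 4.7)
Each of (i)--(iv) is a second-order refinement of a first-order limit already proved in Lemma 6.2: the basic limits are convergent, and the refined claim is that the residue converges at rate $o(\phi^{-1}(t))$. My strategy is to treat (i) as the base case and deduce (ii)--(iv) mechanically, using the integral identities $F'=f$ and $(\phi^{-1})'(t)=-1/\sqrt{2F(t)}$.

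For (i), invoke the Karamata representation (Proposition~5.4): $f(t)=t^{\sigma+1}z(t)\exp\bigl(\int_{a}^{t}y(\tau)/\tau\,d\tau\bigr)$ with $z(t)\to c>0$ and $y(t)\to 0$. A direct differentiation yields
\[
\frac{tf'(t)}{f(t)}-(\sigma+1)=y(t)+\frac{tz'(t)}{z(t)},
\]
and the $C^{1}$ smoothness in~(F1), combined with the decay rate $\phi^{-1}\in NRV_{-\sigma/2}$ from Lemma~6.2(ii), supplies the required $o(\phi^{-1}(t))$ bound on the right-hand side.

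For (ii), set $N(t):=(\sigma+2)F(t)-tf(t)$; differentiating gives $N'(t)=f(t)\bigl[(\sigma+1)-tf'(t)/f(t)\bigr]$, so by~(i) we have $N'(t)=o\bigl(f(t)\phi^{-1}(t)\bigr)$. Integrating from a fixed $t_{0}$ up to $t$ and applying Proposition~5.7(i) to the slowly-varying integrand gives $N(t)=o\bigl(tf(t)\phi^{-1}(t)\bigr)$, i.e.\ exactly~(ii) after dividing by $(\sigma+2)tf(t)\phi^{-1}(t)$. For~(iii), set $G(t):=\sqrt{2F(t)}/(f(t)\phi^{-1}(t))$, whose limit by Lemma~6.2(iii) is $\sigma/(\sigma+2)$; differentiating $G$ via $(\phi^{-1})'=-1/\sqrt{2F}$ and applying~(i)--(ii) leaves only terms of order $o(\phi^{-1}(t))$, yielding the claim (the constant $(\sigma+1)/(\sigma+2)$ in the statement seems to be a typographical variant of $\sigma/(\sigma+2)$). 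For~(iv), exponentiate the Karamata identity
\[
\log\frac{f(\xi_{0}t)}{\xi_{0}^{\sigma+1}f(t)}=\log\frac{z(\xi_{0}t)}{z(t)}+\int_{t}^{\xi_{0}t}\frac{y(\tau)}{\tau}\,d\tau,
\]
each summand of which is $o(\phi^{-1}(t))$ by the same rate hypothesis underlying~(i); multiplying by $\xi_{0}^{\sigma}$ produces~(iv).

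\textbf{Main obstacle.} The crux is~(i): turning the qualitative NRV limit $tf'(t)/f(t)\to\sigma+1$ into the quantitative $o(\phi^{-1}(t))$ estimate. The standard Karamata theory (Proposition~5.4) only delivers $y(t)\to 0$ and $z(t)\to c$ without any rate, so the proof of~(i) must lean on whatever additional second-order regular-variation structure (de Haan--type control on $\hat L$, or the implicit smoothness beyond~(F1)) is built into the class of admissible nonlinearities used in the paper's second-order analysis. Once~(i) is secured, items (ii)--(iv) are straightforward asymptotic bookkeeping of integrals and applications of L'Hospital's rule.
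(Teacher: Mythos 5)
Your reduction of (ii)--(iv) to (i) is sensible asymptotic bookkeeping: the identity $N'(t)=f(t)\bigl[(\sigma+1)-tf'(t)/f(t)\bigr]$ together with Proposition~\ref{prop:assymp:reg:func} does transfer a rate from (i) to (ii), and you are right that the constants in \ref{itm:f1:prop3} and \ref{itm:f1:prop4} must be read as $\sigma/(\sigma+2)$ and $\xi_0^{\sigma}$ (this is how they are actually used in Lemma~\ref{lem:phi:prop}). The genuine gap is exactly the one you flag in your last paragraph and then do not close: step (i). From $f\in RV_{\sigma+1}$ and \ref{itm:f1} alone, the Karamata representation gives only $tf'(t)/f(t)-(\sigma+1)=y(t)+tz'(t)/z(t)\to 0$ with no rate whatsoever (and, strictly, Proposition~\ref{prop:kara:rep} gives $z$ merely continuous, so $z'$ need not exist), whereas $\phi^{-1}(t)\cong\bigl(\sigma^{2}t^{\sigma}\hat L(t)/(2(\sigma+2))\bigr)^{-1/2}$ decays like a power of $t$. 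Taking, say, $f(t)=t^{\sigma+1}\bigl(1+(\log t)^{-1}\bigr)$ for large $t$ gives $tf'(t)/f(t)-(\sigma+1)\asymp(\log t)^{-2}$, which is not $o(t^{-\sigma/2})$; so (i) --- and likewise the estimate $\hat L(\xi_0 t)/\hat L(t)-1=o(\phi^{-1}(t))$ that your argument for (iv) needs --- fails without an additional second-order (de Haan type) hypothesis on the slowly varying part of $f$. Your appeal to ``the $C^1$ smoothness in (F1)'' at that point is a restatement of what is missing, not a proof, and since every other item is reduced to (i), the whole proposal rests on this unproved rate.

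For comparison, the paper's own proof of this lemma is a single line: it records the first-order Karamata asymptotics $F(t)\cong t^{\sigma+2}\hat L(t)/(\sigma+2)$ and $\phi^{-1}(t)\cong\bigl(\sigma^{2}t^{\sigma}\hat L(t)/(2(\sigma+2))\bigr)^{-1/2}$ from Proposition~\ref{prop:assymp:reg:func} and stops --- verbatim the same ``proof'' given for the preceding first-order lemma. It contains neither your decomposition nor the missing rate estimate, so your attempt is in fact more detailed than the published argument; but neither closes the gap, which lies in the hypotheses of the lemma (second-order regular variation of $\hat L$ is tacitly assumed) rather than in the choice of proof strategy.
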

\begin{proof}
By Proposition~\ref{prop:assymp:reg:func}, we have for $t\to\infty$,
\begin{align*}
F(t)=\frac{t^{\sigma+2}}{\sigma+2}\hat{L}(t);\quad (2F(t))^{-1/2}=\left(\frac{2t^{\sigma+2}}{\sigma+2}\hat{L}(t)\right)^{-1/2};\quad\phi^{-1}(t)=\left(\frac{\sigma^{2}t^{\sigma}}{2(\sigma+2)}\hat{L}(t)\right)^{-1/2}.
\end{align*}
\end{proof}

\begin{lemma}\label{lem:phi:prop}
Let us assume the hypothesis of Theorem~\ref{thm:p:2a}, assumption~\ref{itm:y} and $\phi$ satisfies~\eqref{eq:phi}. Then
\begin{enumerate}
\item[\namedlabel{itm:phi:prop1}{(i)}] $-\phi'(t)=\sqrt{2F(\phi(t))},\ \phi(t)>0,\ t>0,\ \phi(0):=\lim\limits_{t\to 0+}\phi(t)=+\infty,\ \phi''(t)=f(\phi(t)),\ t>0$;
\item[\namedlabel{itm:phi:prop2}{(ii)}] $\lim\limits_{t\to 0+}\frac{t\phi'(t)}{\phi(t)}=-\frac{2}\sigma,\ i.e.,\ \phi\in NRVZ_{-2/\sigma}$;
\item[\namedlabel{itm:phi:prop3}{(iii)}] $\lim\limits_{t\to 0+}\frac{\phi'(t)}{t\phi''(t)}=-\frac{\sigma}{\sigma+2},\ i.e.,\ \phi\in NRVZ_{-(\sigma+2)/\sigma}$;
\item[\namedlabel{itm:phi:prop4}{(iv)}] $\lim\limits_{t\to 0+}\frac{\phi(t)}{t^{2}\phi''(t)}=\frac{\sigma^{2}}{2(\sigma+2)}$;
\item[\namedlabel{itm:phi:prop5}{(v)}] $\lim\limits_{t\to 0+}\frac{\frac{\phi'(t)}{t\phi''(t)}+\frac{\sigma+2}{\sigma}}{t}=0$;
\item[\namedlabel{itm:phi:prop6}{(vi)}] for $k\in\mathcal{K},\ \lim\limits_{t\to 0+}\frac{1}{y(t)}\left[1+\frac{\phi'(K(t))}{K(t)\phi''(K(t))}\left(\frac{K(t)K''(t)}{(K'(t))^{2}}++\alpha l_1\right)-\frac{f(\xi_{0}\phi(K(t)))}{\xi_{0}f(\phi(K(t)))}\right]=\frac{\sigma}{\sigma+2}e_k$.
\end{enumerate}
\end{lemma}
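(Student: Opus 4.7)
Items~\ref{itm:phi:prop1}--\ref{itm:phi:prop5} are routine calculus once~\ref{itm:phi:prop1} is in place; the bulk of the work will be~\ref{itm:phi:prop6}.

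For~\ref{itm:phi:prop1} I would differentiate~\eqref{eq:phi} implicitly: from $\int_{\phi(t)}^{\infty}(2F)^{-1/2}\,d\tau=t$ we get $-\phi'(t)/\sqrt{2F(\phi(t))}=1$, hence $\phi'(t)=-\sqrt{2F(\phi(t))}$, and a second differentiation (using $F'=f$) yields $\phi''(t)=f(\phi(t))$; the limit $\phi(0+)=+\infty$ is forced by~\eqref{assum:f2}. For~\ref{itm:phi:prop2}--\ref{itm:phi:prop4} I would pass to the variable $s=\phi(t)$ (so $t=\phi^{-1}(s)$ and $s\to\infty$); using~\ref{itm:phi:prop1},
\begin{align*}
\frac{t\phi'(t)}{\phi(t)}=-\frac{\phi^{-1}(s)\sqrt{2F(s)}}{s},\qquad
\frac{\phi'(t)}{t\phi''(t)}=-\frac{\sqrt{2F(s)}}{\phi^{-1}(s)f(s)},
\end{align*}
so~\ref{itm:phi:prop2} is the statement $\phi^{-1}\in NRV_{-\sigma/2}$ (item~\ref{itm:f:prop2}) and~\ref{itm:phi:prop3} is item~\ref{itm:f:prop3}; item~\ref{itm:phi:prop4} is the product $(t\phi'/\phi)^{-1}\cdot\phi'/(t\phi'')$. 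For~\ref{itm:phi:prop5} the same substitution reduces the claim to the finer asymptotic $\tfrac{\sqrt{2F(s)}}{f(s)\phi^{-1}(s)}-\tfrac{\sigma}{\sigma+2}=o(\phi^{-1}(s))$ recorded in Lemma~\ref{lem:f1}\ref{itm:f1:prop3}.

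For~\ref{itm:phi:prop6} the strategy is to show that the zero-th order part of the bracket vanishes, so that the limit after division by $y(t)$ comes from the first correction only. From~\ref{itm:phi:prop3}, $\phi'(K)/(K\phi''(K))\to-\sigma/(\sigma+2)$; from Lemma~\ref{lem:k:prop}\ref{itm:k:prop3}, $KK''/(K')^{2}=(1-l_1)-e_k\,y(t)+o(y(t))$; and from Lemma~\ref{lem:f1}\ref{itm:f1:prop4}, $f(\xi_0\phi)/(\xi_0 f(\phi))\to\xi_0^{\sigma}=(2+l_1(1-\alpha)\sigma)/(2+\sigma)$. A direct check then gives
\begin{align*}
1-\frac{\sigma}{\sigma+2}\bigl(1-l_1+\alpha l_1\bigr)-\frac{2+l_1(1-\alpha)\sigma}{2+\sigma}=0,
\end{align*}
which is precisely the cancellation that the choice of $\xi_0$ in~\eqref{eq:xi0} is engineered to produce. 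The two subleading errors $\phi'(K)/(K\phi''(K))+\sigma/(\sigma+2)=o(K(t))$ (from~\ref{itm:phi:prop5}) and $f(\xi_0\phi)/(\xi_0 f(\phi))-\xi_0^{\sigma}=o(K(t))$ (from Lemma~\ref{lem:f1}\ref{itm:f1:prop4} applied at $\phi(K(t))$) are both $o(y(t))$, since Hypothesis~1 together with Lemma~\ref{lem:k:prop}\ref{itm:k:prop1} forces $K(t)/y(t)\to 0$. Hence only the $-e_k\,y(t)$ contribution from $KK''/(K')^{2}$, weighted by the limit $-\sigma/(\sigma+2)$, survives, producing the claimed value $\sigma e_k/(\sigma+2)$.

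The principal obstacle will be the bookkeeping in~\ref{itm:phi:prop6}: the zero-th order cancellation is not manifest and relies on the precise algebraic form of $\xi_0$, and each subleading error must first be expressed in its natural variable (namely $K(t)$) before being compared with $y(t)$, which is where Hypothesis~1 is used.
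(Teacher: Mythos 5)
Your proposal is correct and follows essentially the same route as the paper: implicit differentiation for~\ref{itm:phi:prop1}, the substitution $s=\phi(t)$ reducing~\ref{itm:phi:prop2}--\ref{itm:phi:prop5} to Lemma~\ref{lem:f1}, and for~\ref{itm:phi:prop6} the same zero-order cancellation forced by $\xi_0^{\sigma}=1-\tfrac{\sigma}{\sigma+2}(1-l_1+\alpha l_1)$ together with the observation that the two subleading errors are $o(K(t))=o(y(t))$, leaving only the $-e_k$ term from $KK''/(K')^{2}$. One small slip: the fact $K(t)/y(t)\to 0$ comes from assumption~\ref{itm:y} (which gives $t/y(t)\to 0$) combined with $K(t)/t\to 0$ (from $K\in NRVZ_{l_1^{-1}}$, $l_1<1$), not from Hypothesis~1, which only concerns $r^{\theta}/y(r)$.
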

\begin{proof}[Proof of Lemma~\ref{lem:phi:prop}]
\ref{itm:phi:prop1} It follows from the definition in~\eqref{eq:phi}.
For \ref{itm:phi:prop2}-\ref{itm:phi:prop5}, let $u=\phi(t)$ and the successive use of L'H\"o{}spital rule and Lemma~\ref{lem:f1} give us
\begin{align*}
&\ref{itm:phi:prop2} \lim\limits_{t\to 0+}\frac{t\phi'(t)}{\phi(t)}=
-\lim\limits_{t\to 0+}\frac{t\sqrt{2F(\phi(t))}}{\phi(t)}=-\lim\limits_{u\to \infty}\frac{\sqrt{2F(u)}\int_{u}^{\infty}\frac{ds}{\sqrt{2F(s)}}}{u}=-\frac{2}{\sigma};\\
&\ref{itm:phi:prop3} \lim\limits_{t\to 0+}\frac{\phi'(t)}{t\phi''(t)}=-\lim\limits_{t\to 0+}\frac{\sqrt{2F(\phi(t))}}{tf(\phi(t))}=-\lim\limits_{u\to \infty}\frac{\sqrt{2F(u)}}{f(u)\phi^{-1}(u)}=-\frac{\sigma}{\sigma+2};\\
&\ref{itm:phi:prop4} \lim\limits_{t\to 0+}\frac{\phi(t)}{t^{2}\phi''(t)}=
\lim\limits_{t\to 0+}\frac{\phi(t)}{t\phi'(t)}\cdot \frac{\phi'(t)}{t\phi''(t)}\stackrel{\rm Lemma~\ref{lem:phi:prop}\ref{itm:phi:prop2}-\ref{itm:phi:prop3}}{=}
\frac{\sigma^{2}}{2(\sigma+2)};\\
&\ref{itm:phi:prop5} \lim\limits_{t\to 0+}\frac{\frac{\phi'(t)}{t\phi''(t)}+\frac{\sigma}{\sigma+2}}{t}=-\lim\limits_{u\to \infty}\frac{\frac{\sqrt{2F(u)}}{f(u)\phi^{-1}(u)}-\frac{\sigma}{\sigma+2}}{\phi^{-1}(u)}\stackrel{\rm Lemma~\ref{lem:f1}\ref{itm:f1:prop3}}{=}0.
\end{align*}
For~\ref{itm:phi:prop6}, we note that $K\in NRVZ_{l_1^{-1}}$ and $l_1\in(0,1)$, so $\lim\limits_{t\to 0+}\frac{K(t)}{t}=0$. It follows by the choice of $\xi_{0}$ in~\eqref{eq:xi0}, Lemma~\ref{lem:k:prop}\ref{itm:k:prop3} that
\begin{align*}
& 1-(1-l_1)\frac{\sigma}{2+\sigma}-\alpha l_1\frac{\sigma}{2+\sigma}=\xi_0^{\sigma};\\
& \lim\limits_{t\to 0+}\frac{1}{y(t)}\left(\frac{K(t)(t^{-\alpha/2}k(t))'}{t^{-\alpha}k^2(t)}-(1-l_1)\right)=-e_k ;\\
& \lim\limits_{t\to 0+}\frac{1}{y(t)}\left(\frac{\phi'(K(t))}{K(t)\phi''(K(t))} + \frac{\sigma}{\sigma+2}\right)\\
&~~~~~~~~~~~~~~~~~~~~=\lim_{t\to 0+}\frac{K(t)}{t}\lim_{t\to 0+}\frac{t}{y(t)}\lim\limits_{t\to 0+}\frac{\frac{\phi'(K(t))}{K(t)\phi''(K(t))} + \frac{\sigma}{\sigma+2}}{K(t)}\stackrel{\rm Lemma~\ref{lem:phi:prop}\ref{itm:phi:prop5}}{=}0;\\
& \lim\limits_{t\to 0+}\frac{1}{y(t)}\left(\xi_0^{\sigma}-\frac{f(\xi_0\phi(K(t)))}{\xi_0 f(\phi(K(t)))} \right)\\
&~~~~~~~~~~~~~~~~~~=\lim\limits_{t\to 0+}\frac{K(t)}{t}\lim_{t\to 0+}\frac{t}{y(t)}\lim\limits_{t\to 0+}\frac{\xi_0^{\sigma}-\frac{f(\xi_0\phi(K(t)))}{\xi_0 f(\phi(K(t)))}}{K(t)}\stackrel{\rm Lemma~\ref{lem:f1}\ref{itm:f1:prop4}}{=}0;\\
& \lim\limits_{t\to 0+}\frac{1}{y(t)}\left[1+ \frac{\phi'(K(t))}{K(t)\phi''(K(t))}\left(\frac{K(t)K''(t)}{(K'(t))^2}+\alpha l_1\right)- \frac{f(\xi_0\phi(K(t)))}{\xi_0 f(\phi(K(t)))}\right]\\
& = \lim\limits_{t\to 0+}\left(\frac{\phi'(K(t))}{K(t)\phi''(K(t))}+\frac{\sigma}{\sigma+2} \right)\lim\limits_{t\to 0+}\frac{1}{y(t)}\left(\frac{K(t)(t^{-\alpha/2}k(t))'}{t^{-\alpha}k^2(t)}-(1-l_1)\right)\\
& +(1-(1-\alpha)l_1)\lim\limits_{t\to 0+}\frac{1}{y(t)}\left(\frac{\phi'(K(t))}{K(t)\phi''(K(t))}+\frac{\sigma}{\sigma+2} \right)\\
& - \frac{\sigma}{\sigma+2}\lim\limits_{t\to 0+}\frac{1}{y(t)}\left(\frac{K(t)(t^{-\alpha/2}k(t))'}{t^{-\alpha}k^2(t)}-(1-l_1)\right)+ \lim\limits_{t\to 0+}\frac{1}{y(t)}\left(\xi_0^{\sigma}-\frac{f(\xi_0\phi(K(t)))}{\xi_0 f(\phi(K(t)))} \right)\\
& = \frac{\sigma}{\sigma+2}e_k .
\end{align*}
\end{proof}
\begin{proposition}\label{prop:v}
Suppose $v\in C^{2+\alpha}\cap C^{1}(\overline{\B})$ be the unique solution to the problem
\begin{align}\label{eq:aux}
-\Delta v=1\quad v(x)>0,\ x\in\B,\quad v_{|_{\partial\B}}=0.
\end{align}
By H\"opf maximum principle~\cite[Theorem 3.7]{trudinger2001}, we can have
\begin{align}\label{eq:aux:sol}
\nabla v(x)\neq 0,\ \forall x\in\partial\B\ \text{and}\ c_1 d(x)\le v(x)\le c_2 d(x),\ \forall x\in \B,
\end{align}
for some constants $c_1 , c_2>0$.
\end{proposition}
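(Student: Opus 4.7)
The plan is to establish the three assertions --- existence and uniqueness of $v$, non-vanishing of $\nabla v$ on $\partial \B$, and the two-sided bound $c_1 d(x) \le v(x) \le c_2 d(x)$ --- using classical potential theory. Existence and uniqueness of a $C^{2,\alpha}(\overline{\B})$ solution of $-\Delta v = 1$ in $\B$ with zero Dirichlet data is standard Schauder theory on the smooth domain $\B$; the strong maximum principle then forces $v > 0$ in $\B$, since $-\Delta v = 1 > 0$ rules out any interior minimum. As a sanity check, on the unit ball the explicit solution $v(x) = (1 - |x - x_0|^2)/(2N)$ is available and already verifies every claim, but I will phrase the argument in a way that works on any smooth bounded domain.

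For the non-vanishing of the gradient on the boundary I would invoke Hopf's boundary point lemma: at every $y \in \partial\B$ the inward normal derivative $\partial v / \partial \nu(y)$ is strictly positive. Since $v \equiv 0$ along $\partial\B$, the tangential components of $\nabla v$ vanish there, so $\nabla v(y)$ reduces to a nonzero inward normal vector, giving $\nabla v(y) \ne 0$ for every $y \in \partial\B$. The upper bound $v(x) \le c_2 d(x)$ is then immediate from $v \in C^1(\overline{\B})$ and $v = 0$ on $\partial\B$: for $x \in \B$ let $y_x \in \partial\B$ be a nearest boundary point, so $|x - y_x| = d(x)$; the fundamental theorem of calculus along the segment from $y_x$ to $x$ gives $v(x) = \int_0^1 \nabla v((1-s)y_x + sx) \cdot (x - y_x)\,ds$, whence $v(x) \le \|\nabla v\|_{L^\infty(\overline{\B})}\, d(x)$, so one may take $c_2 := \|\nabla v\|_{L^\infty(\overline{\B})}$.

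The only mildly technical step --- and thus the main obstacle --- is the uniform lower bound $v(x) \ge c_1 d(x)$, which requires upgrading the pointwise Hopf estimate to a uniform one via compactness. By Hopf and continuity of $\nabla v$ on $\overline{\B}$, together with compactness of $\partial\B$, there exists $c_0 > 0$ with $\partial v/\partial\nu(y) \ge c_0$ for every $y \in \partial\B$; a Taylor expansion in a tubular neighborhood $\B_{\mu_0}$ of $\partial\B$ (where the nearest-point projection is smooth) then yields $v(x) \ge (c_0/2)\, d(x)$ on $\B_{\mu_0}$. On the complementary compact set $\overline{\B} \setminus \B_{\mu_0}$ the continuous strictly positive function $v$ attains a positive minimum $m > 0$, while $d(x)$ is bounded above by the diameter of $\B$, so $v(x) \ge (m/\mathrm{diam}(\B))\, d(x)$ there. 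Setting $c_1 := \min\{c_0/2,\; m/\mathrm{diam}(\B)\}$ completes the proof.
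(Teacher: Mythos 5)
Your proposal is correct, and it takes exactly the route the paper intends: the paper offers no written proof beyond the citation to the H\"opf maximum principle, and your argument (Schauder existence, strong maximum principle for positivity, Hopf boundary point lemma for $\nabla v\neq 0$ on $\partial\B$, the fundamental theorem of calculus for the upper bound, and the compactness upgrade of the Hopf estimate for the lower bound) is the standard elaboration of that citation. The only remark worth making is that since $\B$ is a ball, the explicit solution $v(x)=(1-|x-x_0|^2)/(2N)$ you mention as a sanity check already settles every claim directly, so the general-domain argument, while valid, is more than is strictly needed here.
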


Now we give the proof of Theorem~\ref{thm:p:2a}.

\noindent
{\em Proof of Theorem~\ref{thm:p:2a}:}
For given $\delta>0$, we denote
\begin{align*}
\B_{\delta}:=\{x\in\B:\ 0<d(x)<\delta \}.
\end{align*}
Thus for a $C^{2}$-smooth bounded domain $\B$, there exists a constant $\mu>0$, depending on $\B$ such that
\begin{align}\label{eq:dist:pr:2}
d\in C^2(\overline{\B}_{\mu})\quad \text{and}\quad |\nabla d|\equiv 1\ \text{on}\ \B_{\mu}.
\end{align}
Let us define for $\rho\in(0,\mu/2)$,
\begin{align*}
\B^{-}_{\rho}:=\B_{\mu}\setminus \bar{\B}_{\rho},\quad \B^{+}_{\rho}:=\B_{\mu-\rho},\quad\text{and}\quad d^{\pm}(x):=d(x)\pm \rho
\end{align*}
Let us consider the case when $k\in\mathcal{K}$ (see Section~\ref{sec:intro}), is {\em nondecreasing} on $(0,\nu)$ for some $\nu>0$. 
We can take, without loss of generality, that $\nu>\mu$.
\begin{align}\label{eq:subs}
z^{\pm}(x):=K(d^{\pm}(x)),\ x\in \B^{\pm}_{\rho};\quad
w^{\pm}:=\xi_{0}\phi(z^{\pm}(x))(1+\chi^{\pm}y(d^{\pm}(x))),\ x\in \B^{\pm}_{\rho},
\end{align}
where
\begin{align}\label{eq:lim:const:1}
\chi^{\pm}=\dfrac{\sigma((1-\alpha/2)e_k-l_1)-B_0(2+\sigma l_1)G(\theta,y)\mp(2+\sigma)\varepsilon}{\sigma(3+\sigma+l_1)-(\alpha/2)\sigma^2 l_1^2+\alpha\sigma l_1}
\end{align}
By the Lagrange mean value theorem applying for $f$ in the interval ending with the endpoints $1$ and $1+\chi^{\pm}y(d^{\pm}(x))$, we obtain,
\begin{align*}
f(w^{\pm}(x))=f(\xi_0 \phi(K(d^{\pm}(x))))+\xi_0 \chi^{\pm}y(d^{\pm}(x))\phi(K(d^{\pm}(x)))f'(\gamma^{\pm}(d^{\pm}(x))),
\end{align*}
where $\gamma^{\pm}(t):=\xi_{0}\phi(K(t))[1+\lambda^{\pm}(t)\chi^{\pm}y(t)]$ and $\lambda^{\pm}(t)\in[0,1]$. Since $f\in RV_{\sigma}$, by Proposition~\ref{prop:unif:conv} we obtain
\begin{align*}
\lim_{d^{\pm}(x)\to 0}\frac{f(\xi_0 \phi(K(d^{\pm}(x))))}{f(\gamma^{\pm}(d^{\pm}(x)))}=\lim_{d^{\pm}(x)\to 0}\frac{f'(\xi_0 \phi(K(d^{\pm}(x))))}{f'(\gamma^{\pm}(d^{\pm}(x)))}=1.
\end{align*}
Then direct computations give the followings.
\begin{align}
\nabla w^{\pm}(x)
=\xi_{0}(A(d^{\pm}(x))+B(d^{\pm}(x)))\nabla d,
\end{align}
where
\begin{align*}
A(t):=\phi'(K(t))K'(t),\qquad B(t):=C^{\pm}\left(\phi'(K(t))K'(t)y(t)+\phi(K(t))y'(t)\right).
\end{align*}
\begin{align}
\Delta w^{\pm}(x)=
\xi_{0}\left[(A'(d(x))+B'(d(x)))|\nabla d|^{2}
+(A(d(x))+B(d(x)))\Delta d\right];\\
\Delta_{d^{\alpha},2}w^{\pm}(x)={\rm div}(d^{\alpha}\nabla w^{\pm}(x))=\alpha d^{\alpha-1}\nabla d\cdot\nabla w^{\pm}(x)
		+d^{\alpha}\Delta w^{\pm}(x)~~~~~~~~~~~~~~~~~~~~~~~~~\nonumber\\
=\xi_{0}\left[\alpha d^{\alpha-1}(A(d(x))+B(d(x)))|\nabla d|^{2}
+d^{\alpha}\left((A'(d(x))+B'(d(x)))|\nabla d|^{2}\right.\right.\nonumber\\
\left.\left. +(A(d(x))+B(d(x)))\Delta d\right)\right].
\end{align}
Then we obtain successively,
\begin{align*}
b(x)f(w^{\pm})-\Delta_{d^{\alpha},2}w^{\pm}=k^{2}(d^{\pm}(x))(1+(c\pm\varepsilon)d^{\pm}(x))f(w^{\pm}(x)) -\Delta_{d^{\alpha},2}w^{\pm}(x)\\
 =\xi_{0}k^{2}(d)\phi''(K(d^{\pm}(x)))y(d^{\pm}(x)))\left[I_1(d^{\pm}(x)) + I_2(d^{\pm}(x)) +I_3(d^{\pm}(x)) +I_4(d^{\pm}(x))\right],
\end{align*}
where
\begin{align*}
I_1(r) &:=\frac{1}{y(r)}\left[1+\frac{\phi'(K(r))}{K(r)\phi''(K(r))}\left(\frac{K(r)K''(r)}{(K'(r))^2}+\alpha l_1\right) -\frac{f(\xi_0\phi(K(r)))}{\xi_0 f(\phi(K(r)))} \right];\\
I_{2}^{\pm}(r) &:=-(B_{0}\pm\varepsilon)\frac{r^{\theta}}{y(r)}\frac{f(\xi_0\phi(K(r)))}{\xi_0 f(\phi(K(r)))}+ \chi^{\pm}\left[1+\frac{\phi'(K(r))}{K(r)\phi''(K(r))}\left(\frac{K(r)K''(r)}{(K'(r))^2}+\frac{2K(r)}{K'(r)}\frac{y'(r)}{y(r)} \right)\right.\\
&\left. ~~~~~+\frac{\phi(K(r))}{(K(r))^{2}\phi''(K(r))}\frac{K(r)K''(r)}{(K'(r))^2}\frac{y''(r)}{y(r)} -\frac{f'(\gamma^{\pm}(K(r)))}{f'(\phi(K(r)))}\frac{\phi(K(r))f(\phi'(K(r)))}{f(\phi(K(r)))}\right];\\
I_3^{\pm}(r) &:=\alpha\left[\frac{\phi'(K(r))}{K(r)\phi''(K(r))}\left(\frac{K(r)}{rK'(r)}\frac{1}{y(r)}-\frac{l_1}{y(r)}+\chi^{\pm}\frac{K(r)}{rK'(r)} \right)\right.\\
&\left. \qquad\qquad\qquad~~~~~~~~~~~~~~~~~~~~~~~~~~~+ \chi^{\pm}\frac{\phi(K(r))}{(K(r))^{2}\phi''(K(r))}\left(\frac{K(r)}{rK'(r)}\right)^{2}\frac{ry'(r)}{y(r)} \right];\\
I_4^{\pm}(r) &:=\frac{\phi'(K(r))}{K(r)\phi''(K(r))}\frac{K(r)}{K'(r)}\left(\frac{1}{y(r)} +\chi^{\pm}\right)+ \chi^{\pm}\frac{\phi(K(r))}{(K(r))^{2}\phi''(K(r))}\left(\frac{K(r)}{K'(r)}\right)^{2}\frac{y'(r)}{y(r)}\\
&~~~~~~~~~~~~~~~~~~~~~~~~~~~~~~~~~~~-\chi^{\pm}(c\pm\varepsilon)\frac{f'(\gamma^{\pm}(K(r)))}{f'(\phi(K(r)))}\frac{\phi(K(r))f(\phi'(K(r)))}{f(\phi(K(r)))}.
\end{align*}
Thus together with Lemma~\ref{lem:phi:prop} and suitable choices of $\xi_0$ and $\chi$, we have
\begin{lemma}\label{lem:lims:ln}
Let the assumptions of Theorem~\ref{thm:p:2} and $y(r)=(-\ln r)^{-\tau},\ \tau>0$. Then
\begin{align*}
&(i)\ \lim\limits_{r\to 0+}I_1(r)=\frac{\sigma}{2+\sigma}e_k;\quad (ii)\ \lim\limits_{r\to 0+}I_{2}^{\pm}(r)=\chi^{\pm}\left[1-\frac{\sigma}{2+\sigma}(1-l_1)-\sigma\right];\\
&(iii)\ \lim\limits_{r\to 0+}I_3^{\pm}(r)=-\frac{\alpha\sigma}{2+\sigma}(L^{*}+\chi^{\pm}l_1);\ (iv)\ \lim\limits_{r\to 0+}I_4^{\pm}(r)=0.
\end{align*}
\end{lemma}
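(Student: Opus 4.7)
The proof plan is to verify each of the four stated limits by substituting $y(r)=(-\ln r)^{-\tau}$ and reducing every occurrence of $\phi$, $K$ and $f$ to one of the asymptotic identities already established in Lemmas~\ref{lem:phi:prop}, \ref{lem:k:prop} and~\ref{lem:f1}, combined with the defining property of the class $\mathcal{K}_{0,\tau}$ (namely $(K/K')^{(1)}(r)=L^{*}(-\ln r)^{-\tau}(1+o(1))$ as $r\to 0+$).

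Part (i) is essentially a direct application of Lemma~\ref{lem:phi:prop}\ref{itm:phi:prop6}, specialised to $l_1=0$, $e_k=L^{*}$ and the present choice of $y$: the bracket defining $I_1(r)$ is literally the quantity whose limit is computed there, so the conclusion $\frac{\sigma}{\sigma+2}L^{*}$ is immediate.

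For part (ii) I would first dispose of the inhomogeneity: $r^{\theta}/y(r)=r^{\theta}(-\ln r)^{\tau}\to 0$ and $f(\xi_{0}\phi(K(r)))/(\xi_{0}f(\phi(K(r))))\to \xi_{0}^{\sigma}$ by Lemma~\ref{lem:f1}\ref{itm:f1:prop4}, so the $(B_{0}\pm\varepsilon)$-summand contributes nothing. Within the $\chi^{\pm}$-bracket I would use the elementary identities $y'(r)/y(r)=\tau/(r(-\ln r))$ and $y''(r)/y(r)=\tau r^{-2}(-\ln r)^{-1}[(\tau+1)(-\ln r)^{-1}-1]$ together with $K(r)/(rK'(r))\to l_1=0$ (Lemma~\ref{lem:k:prop}\ref{itm:k:prop2}) to show that the $y'/y$ and $y''/y$ summands both vanish in the limit; the remaining fixed-$r$ terms then combine, via Lemma~\ref{lem:phi:prop}\ref{itm:phi:prop3}--\ref{itm:phi:prop4} and the NRV relation $tf'(t)/f(t)\to \sigma+1$, to produce $\chi^{\pm}[1-\frac{\sigma}{\sigma+2}(1-l_1)-\sigma]$.

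Parts (iii) and (iv) follow the same template: exploit $l_1=0$ to kill $K/(rK')$ factors at first order, extract the single refined rate supplied by $\mathcal{K}_{0,\tau}$, and apply Lemma~\ref{lem:phi:prop}\ref{itm:phi:prop3} and Lemma~\ref{lem:f1}\ref{itm:f1:prop4} to the residual terms. For (iii) this yields the stated coefficient $-\alpha\sigma(L^{*}+\chi^{\pm}l_{1})/(\sigma+2)$; for (iv) each summand vanishes either because $l_1=0$ cancels its leading factor $\phi'(K)/(K\phi''(K))\cdot K/K'$, or because the Lagrange factor $f'(\gamma^{\pm}(K(r)))/f'(\phi(K(r)))\to 1$ by Proposition~\ref{prop:unif:conv} applied to $f'\in RV_{\sigma}$. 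The main technical obstacle will be the $y''(r)/y(r)$ term inside $I_{2}^{\pm}$: its modulus grows like $r^{-2}(-\ln r)^{-1}$, so its vanishing in the limit depends on a delicate cancellation enforced by the $\mathcal{K}_{0,\tau}$ structure (the deviation $1-KK''/(K')^{2}$ must decay at exactly the logarithmic rate that compensates $y''/y$, via the prefactor $\phi(K)/(K^{2}\phi'')$ which is itself controlled by Lemma~\ref{lem:phi:prop}\ref{itm:phi:prop4}). A secondary pitfall is uniform control of the Lagrange remainder whenever $f$ is expanded to first order in $y$, which is secured by the uniform convergence theorem for regularly varying $f'$.
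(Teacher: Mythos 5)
The paper states Lemma~\ref{lem:lims:ln} without proof, pointing only to Lemma~\ref{lem:phi:prop} and ``suitable choices of $\xi_0$ and $\chi$'', so your overall plan --- reducing every summand of $I_1,I_2^{\pm},I_3^{\pm},I_4^{\pm}$ to the limits collected in Lemmas~\ref{lem:k:prop}, \ref{lem:f1}, \ref{lem:phi:prop} plus the defining rate of $\mathcal{K}_{0,\tau}$ --- is exactly the intended route. Your part (i) (a direct specialisation of Lemma~\ref{lem:phi:prop}\ref{itm:phi:prop6} to $l_1=0$, $e_k=L^{*}$) and your sketches of (iii) and (iv) are sound, including the key observation that $\frac{1}{y(r)}\left(\frac{K}{rK'}-l_1\right)\to L^{*}$ is where the refined $\mathcal{K}_{0,\tau}$ information enters.

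There is, however, a genuine gap in your treatment of the $y''/y$ term of $I_2^{\pm}$, which you yourself single out as the main obstacle. The mechanism you propose --- that ``the deviation $1-KK''/(K')^{2}$ must decay at exactly the logarithmic rate that compensates $y''/y$'' --- cannot work: the summand contains the factor $KK''/(K')^{2}$ itself, which tends to $1-l_1=1$, not the deviation $1-KK''/(K')^{2}$; and even if it did, membership in $\mathcal{K}_{0,\tau}$ only controls that deviation at the rate $(-\ln t)^{-\tau}$, which is nowhere near enough to absorb $|y''/y|\sim\tau r^{-2}(-\ln r)^{-1}$. The correct resolution is structural, not a cancellation: differentiating $B(t)=\chi^{\pm}\left(\phi'(K)K'y+\phi(K)y'\right)$ and normalising by $\phi''(K)(K')^{2}y$ shows that the coefficient of $y''/y$ is $\frac{\phi(K)}{K^{2}\phi''(K)}\left(\frac{K}{K'}\right)^{2}=\frac{\phi(K)}{K^{2}\phi''(K)}\left(\frac{K}{rK'}\right)^{2}r^{2}$ (the factor $\frac{KK''}{(K')^{2}}$ printed in the display defining $I_2^{\pm}$ is evidently a misprint --- compare the $(K/K')^{2}$ factors appearing in $I_3^{\pm}$ and $I_4^{\pm}$). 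Since $K/(rK')\to l_1$ by Lemma~\ref{lem:k:prop}\ref{itm:k:prop1} and $r^{2}y''/y=\tau\left[(\tau+1)(-\ln r)^{-2}-(-\ln r)^{-1}\right]\to 0$, the term vanishes outright. Taken literally, your argument either yields $I_2^{\pm}\to-\infty$ or invokes an asymptotic relation the hypotheses do not supply. A minor secondary point: the surviving terms of $I_2^{\pm}$ involve $\phi f'(\phi)/f(\phi)\to\sigma+1$ (not $\sigma$, since $f\in RV_{\sigma+1}$), so the constant $1-\frac{\sigma}{2+\sigma}(1-l_1)-\sigma$ should be derived rather than asserted.
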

\begin{lemma}\label{lem:lims}
Let the assumptions from Theorem~\ref{thm:p:2a}. Then
\begin{align*}
&(i)\ \lim\limits_{r\to 0+}I_1(r)=\frac{\sigma}{2+\sigma}e_k;\\
&(ii)\ \lim\limits_{r\to 0+}I_{2}^{\pm}(r)=\chi^{\pm}\left[1-\frac{\sigma}{2+\sigma}(1+l_1)-\sigma\right]-(B_{0}\pm\varepsilon)\frac{2+l_1(1-\alpha)\sigma}{2+\sigma}G(\theta,y);\\
&(iii)\ \lim\limits_{r\to 0+}I_3^{\pm}(r)=\alpha\left[-\frac{\sigma}{2+\sigma}\frac{e_k}{2}+\chi^{\pm}\left( \frac{\sigma^2 l_1^2}{2(2+\sigma)}-\frac{\sigma l_1}{2+\sigma}\right)\right];\ (iv)\ \lim\limits_{r\to 0+}I_4^{\pm}(r)=-\frac{\sigma l_1}{2+\sigma}.
\end{align*}
\end{lemma}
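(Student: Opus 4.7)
The plan is to establish each of the four limits by systematically combining the asymptotic expansions of the building blocks $K$, $\phi$, $f$, and $y$ already collected earlier in the section. Item~(i) is a verbatim restatement of Lemma~\ref{lem:phi:prop}\ref{itm:phi:prop6}: the bracketed expression inside $I_1$ is identical to the one appearing there, so no separate argument is needed beyond recording the identity. For items~(ii)--(iv) my strategy is to split each $I_j^{\pm}$ into a finite sum of products, evaluate the limit of every factor using the cited lemmas and Hypothesis~1, and then assemble the answers.

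For (ii), I would handle the two summands separately. The first, $-(B_0\pm\varepsilon)(r^\theta/y(r))\cdot f(\xi_0\phi(K(r)))/(\xi_0 f(\phi(K(r))))$, factors into $G(\theta,y)$ by Hypothesis~1 and $\xi_0^{\sigma}$ by the Uniform Convergence Theorem (Proposition~\ref{prop:unif:conv}) applied to $f\in RV_{\sigma+1}$; substituting $\xi_0^{\sigma}=(2+l_1(1-\alpha)\sigma)/(\sigma+2)$ from \eqref{eq:xi0} produces the stated coefficient of $G(\theta,y)$. The remaining summand is $\chi^{\pm}$ times a bracket whose limit is computed term by term: $\phi'(K)/(K\phi''(K))\to -\sigma/(\sigma+2)$ by Lemma~\ref{lem:phi:prop}\ref{itm:phi:prop3}, $KK''/(K')^2\to 1-l_1$ by Remark~\ref{rem:k:lim}, $\phi(K)/(K^2\phi''(K))\to \sigma^2/(2(\sigma+2))$ by Lemma~\ref{lem:phi:prop}\ref{itm:phi:prop4}, $\phi f'(\phi)/f(\phi)\to \sigma+1$ from the NRV property \eqref{eq:def:nrv}, and $f'(\gamma^{\pm})/f'(\phi)\to 1$ again by uniform convergence since $\gamma^{\pm}(K(r))/\phi(K(r))\to \xi_0$. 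The $y'/y$ and $y''/y$ contributions are absorbed using the scaling appropriate to the class $\mathcal{K}_{(0,1],\zeta}$.

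For (iii), the key new identity I would prove is
\[
\lim_{r\to 0+}\frac{1}{y(r)}\left(\frac{K(r)}{rK'(r)}-l_1\right)=\frac{e_k}{2}.
\]
This follows from writing $K(r)/K'(r)=\int_0^r (K/K')'(s)\,ds$, expanding the integrand via Lemma~\ref{lem:k:prop}\ref{itm:k:prop3} as $l_1+e_k y(s)+o(y(s))$, and applying L'Hôpital's rule together with the scaling $\lim ry'(r)/y(r)=1$ that characterizes the admissible $y$ in the class under consideration. Plugging this into the decomposition of $I_3^{\pm}$ alongside $\phi'/(K\phi'')\to -\sigma/(\sigma+2)$, $K/(rK')\to l_1$, and $\phi/(K^2\phi'')\to\sigma^2/(2(\sigma+2))$ delivers the claimed expression. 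Item~(iv) is handled in the same spirit: each factor in $I_4^{\pm}$ admits an explicit limit from the identities already quoted, and the surviving algebraic combination collapses to $-\sigma l_1/(\sigma+2)$ after using the value of $\xi_0^{\sigma+1}$ implied by \eqref{eq:xi0}.

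The main obstacle is not conceptual but bookkeeping: keeping track of which terms survive in the limit, which are multiplied by an extra factor of $y(r)$ and therefore vanish, and which combine across the different summands. All building-block limits are supplied by Lemmas~\ref{lem:phi:prop} and~\ref{lem:k:prop}, Hypothesis~1, and the normalized regular variation of $f$; the work is to assemble them without arithmetic slips, particularly in (ii) where the $y'/y$ and $y''/y$ contributions must combine into the clean coefficient $1-\sigma(1+l_1)/(\sigma+2)-\sigma$.
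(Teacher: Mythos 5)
The paper itself offers no proof of this lemma---it is asserted immediately after the definitions of $I_1,\dots,I_4^{\pm}$ with the remark that it follows from Lemma~\ref{lem:phi:prop} and the choices of $\xi_0$ and $\chi$---so your term-by-term strategy is exactly the intended route. Item (i) is indeed a verbatim instance of Lemma~\ref{lem:phi:prop}\ref{itm:phi:prop6}, and your treatment of the first summand of $I_2^{\pm}$ (Hypothesis~1 for $r^{\theta}/y(r)\to G(\theta,y)$ together with regular variation giving $f(\xi_0\phi)/(\xi_0 f(\phi))\to\xi_0^{\sigma}=(2+l_1(1-\alpha)\sigma)/(2+\sigma)$) is correct.

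The gap sits in the step you advertise as the ``key new identity'' and in the bookkeeping you defer. Your derivation of $\lim_{r\to0+}\frac{1}{y(r)}\bigl(\frac{K(r)}{rK'(r)}-l_1\bigr)=\frac{e_k}{2}$ rests on ``the scaling $\lim ry'(r)/y(r)=1$''. That is not among the hypotheses, and it is incompatible with assumption \ref{itm:y} of Theorem~\ref{thm:p:2a}, which requires $t/y(t)\to0$: for the class $\mathcal{K}_{(0,1],\zeta}$ with $y(t)=t^{\zeta}$ this forces $\zeta<1$, hence $ty'(t)/y(t)=\zeta\neq1$. Writing $g=K/K'$ and applying L'H\^{o}pital's rule to $\int_0^r(g'(s)-l_1)\,ds\big/(r\,y(r))$ yields $e_k/(1+\lim_{r\to0+}ry'(r)/y(r))=e_k/(1+\zeta)$, not $e_k/2$, so either an additional hypothesis on $y$ must be imposed or the constant in (iii) comes out differently. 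The same unexamined scaling factor governs exactly the terms you propose to ``absorb'': $\frac{2K}{K'}\frac{y'}{y}=2\frac{K}{rK'}\cdot\frac{ry'}{y}\to 2l_1\lim\frac{ry'}{y}$ and the $y''/y$ term in $I_2^{\pm}$, as well as the $\bigl(\frac{K}{rK'}\bigr)^{2}\frac{ry'}{y}$ term in $I_3^{\pm}$, contribute nonvanishing multiples of $\lim ry'/y$ and $\lim r^{2}y''/y$, and these are precisely what must assemble into the coefficients $1-\frac{\sigma}{2+\sigma}(1+l_1)-\sigma$ in (ii) and $\frac{\sigma^2l_1^2}{2(2+\sigma)}$ in (iii); a similar check is needed for the surviving term of $I_4^{\pm}$ in (iv). Since you explicitly leave this assembly undone, and the one piece of it you do carry out invokes a scaling that the stated hypotheses rule out, the proof of items (ii)--(iv) is not complete as written.
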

Fix $\varepsilon>0$ and choose $\delta>0$ such that
\begin{enumerate}
\item $d(x)\in C^{2}(\{x\in\B:\ d(x)<\delta\})$;
\item $k$ is nondecreasing on $(0,\delta)$;
\item $1+(c-\varepsilon)(d^{\pm}(x))^{\theta}<b(x)/k^{2}(d^{\pm}(x))<1+(c+\varepsilon)(d^{\pm}(x))^{\theta},\ x\in\B_{\delta}$ (by~\ref{itm:b1},~\eqref{assum:b2});
\item $\phi'(t)<0,\ \phi''(t)>0$ for $t\in(0,\delta)$;
\item $\lim\limits_{r\to 0+}(I_1(r)+I_{2}^{\pm}(r)+ I_3^{\pm}(r)+I_4^{\pm}(r))=\pm\varepsilon$, for $r\in(0,\delta)$ (by~Lemma~\ref{lem:lims}).
\end{enumerate}
Thus with the free choice of $\delta>0$ small enough such that
\begin{align}
k^{2}(d^{+})(1+(c+\varepsilon)(d^{+})^{\theta})f(w^{+})-\Delta_{d^{\alpha},2}w^{+}\ge 0,\ x\in \B^{+}_{\rho};\label{eq:sup:constr2}\\
k^{2}(d^{-})(1+(c-\varepsilon)(d^{-})^{\theta})f(w^{-})-\Delta_{d^{\alpha},2}w^{-}\le 0,\ x\in \B^{-}_{\rho}.\label{eq:sub:constr2}
\end{align}
Now suppose that $u$ is a nonnegative solution of~\eqref{eq:gen:2}. First we note that for $M$ sufficiently large
\begin{align*}
u\le w^{-}+Mv(x),\quad\text{on}\ \partial\B_{\mu},\ \text{and}\quad 
w^{+}\le u+Mv(x),\quad\text{on}\ \partial\B_{\mu-\rho},
\end{align*}
where $v$ is the solution of the problem~\eqref{eq:aux}.
We observe that $w^{-}(x)\to\infty$ as $d^{-}(x)\to \rho$, and $u_{|_{\partial\B}}=+\infty>w^{+}(x)_{|_{\partial\B}}$. 
It follows from comparison principle~\cite[see Lemma 3.18]{heino12}, that
\begin{align*}
u\le w^{-}+Mv(x),\quad\text{on}\ \B_{\rho}^{-},\quad \text{and}\ 
w^{+}\le u+Mv(x),\quad\text{on}\ \B_{\rho}^{+}.
\end{align*}
Thus
\begin{align}
\chi^{-}\ge \left[ -1+\frac{u(x)}{\xi_0\phi(K(d^{-}(x)))}\right](y(d^{-}(x)))-\frac{Mv(x)}{\xi_0 y(d^{-}(x))\phi(K(d^{-}(x)))};\\
\chi^{+}\le \left[ -1+\frac{u(x)}{\xi_0\phi(K(d^{+}(x)))}\right](y(d^{+}(x)))+\frac{Mv(x)}{\xi_0 y(d^{+}(x))\phi(K(d^{+}(x)))}.
\end{align}
Hence, for $x\in\B^{-}_{\rho}\cap\B^{+}_{\rho}$, we have
\begin{align*}
\chi^{+}-\frac{Mv(x)}{\xi_0 y(d^{+}(x))\phi(K(d^{+}(x)))}\le \left[ -1+\frac{u(x)}{\xi_0\phi(K(d^{+}(x)))}\right](y(d^{+}(x)));~~~\\
\text{and}~~~~~~~~~~~~~\left[ -1+\frac{u(x)}{\xi_0\phi(K(d^{-}(x)))}\right](y(d^{-}(x))) \le \chi^{-}+\frac{Mv(x)}{\xi_0 y(d^{-}(x))\phi(K(d^{-}(x)))}.
\end{align*}
On letting $\rho\to 0$, we see that
\begin{align*}
\chi^{+}-\frac{Mv(x)}{\xi_0 y(d(x))\phi(K(d(x)))}\le \left[ -1+\frac{u(x)}{\xi_0\phi(K(d(x)))}\right]\le \chi^{-}+\frac{Mv(x)}{\xi_0 y(d(x))\phi(K(d(x)))}.
\end{align*}
On recalling that $\phi(t)\to \infty$ as $t\to 0$ and~\eqref{eq:aux:sol} along with assumption~\ref{itm:y},
 we have
\begin{align}\label{eq:growth2a}
\chi\le \liminf_{d(x)\to 0}\left[ -1+\frac{u(x)}{\xi_0\phi(K(d(x)))}\right]
\le \limsup_{d(x)\to 0}\left[ -1+\frac{u(x)}{\xi_0\phi(K(d(x)))}\right]
\le \chi.
\end{align}
The claimed result in~\eqref{eq:growth:2nd:a}-\eqref{eq:growth:2nd:b} follows by letting $\varepsilon\to 0$ in~\eqref{eq:growth2a}. Hence, it completes the proof.

\bibliographystyle{unsrt}
\bibliography{ref1}
\end{document}